\documentclass[a4paper,12pt,oneside]{amsart}
\usepackage[cm]{fullpage}
\usepackage{typearea}
\typearea{15}

\usepackage{mymacros}

\usepackage{mathtools}
\usepackage[colorlinks=false]{hyperref} 
\usepackage[all]{xy}
\usepackage{multirow}

\newcommand\SU{\operatorname{SU}}
\newcommand\Spin{\operatorname{Spin}}
\newcommand\Aff{\operatorname{Aff}}

\newcommand\AdS{\operatorname{AdS}}
\newcommand\dS{\operatorname{dS}}

\newcommand\pr{\operatorname{pr}}
\newcommand\ad{\operatorname{ad}}
\newcommand\nablabar{\overline\nabla}
\renewcommand{\labelenumi}{(\arabic{enumi})}

\title{Generalized Killing spinors as Lagrangian sections of principal spin bundles}
\author{Fumihiro Ueno}
\email{ueno-fumihiro-dx@alumni.osaka-u.ac.jp}

\begin{document}
	\begin{abstract}
		In this paper, we study a one-to-one correspondence between $\epsilon$-unit generalized Killing spinors on a three-dimensional pseudo-Riemannian spin manifold $(M,g)$ and Lagrangian sections of an almost pseudo-Hermitian structure on its principal spin bundle.
	\end{abstract}
	\maketitle
	\section{Introduction}
	A generalized Killing spinor $\psi \in \Gamma(\Sigma M) $ on a pseudo-Riemannian spin manifold $ (M,g) $ is a spinor field satisfying the following differential equation for every $\scX\in \Gamma(TM)$,
	\begin{equation*}
		\nabla_{\scX}\psi = \scA(\scX)\cdot\psi, 
	\end{equation*}
	where $ \scA $ is a real or purely imaginary $g$-symmetric $ (1,1) $-type tensor field, $ \scA(\scX)\cdot \psi $ denotes Clifford product, and $\nabla$ is the spin Levi-Civita connection. 
	Generalized Killing spinors arise as restrictions of parallel spinors to hypersurfaces \cite{bar2005generalized}.
	In dimension three, Moroianu and Semmelmann \cite{moroianu2014generalized} establish a one-to-one correspondence, up to sign, between unit-length generalized Killing spinors and divergence-free orthonormal frames on Riemannian spin manifolds. They further show that, on the three-sphere, this correspondence identifies unit-length generalized Killing spinors with Lagrangian graphs in the nearly K\"ahler manifold $\bS^3\times\bS^3$.
	
	The aim of the present paper is to extend this correspondence from the three-dimensional sphere to arbitrary three-dimensional pseudo-Riemannian spin manifolds. In this extension, Lagrangian graphs in the nearly K\"ahler manifold $\bS^3\times\bS^3$ are replaced by Lagrangian sections of the almost pseudo-Hermitian structure on the principal spin bundle. 
	\begin{theorem}\label{th:GKS-LagGraph-main}
		For each $\epsilon\in\scE_r$, there is a
		one-to-one correspondence between $\epsilon$-unit generalized
		Killing spinors on a three-dimensional pseudo-Riemannian spin manifold $(M,g)$ and Lagrangian sections of almost pseudo-Hermitian structure $(\scG_c,\scJ_c)$ on the total space of its principal spin bundle $\pi\colon P\to M$.
		Thus, in the Lorentzian case, the correspondence is two-to-one if
		spacelike and timelike spinors are considered together.
	\end{theorem}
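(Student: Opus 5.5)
The plan is to factor the correspondence through spin frames, following the Moroianu--Semmelmann strategy, and then to read off the Lagrangian condition on the spin bundle.

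\textbf{Step 1: spinors as sections of $P$.} In dimension three the spinor module is $\bC^2$ or $\bR^2$-type, depending on the signature. The spin group $\Spin(p,q)$ acts simply transitively on the set of $\epsilon$-unit spinors, i.e.\ those with $\langle\psi,\psi\rangle=\epsilon$ for the chosen $\epsilon\in\scE_r$. This is where $\scE_r$ enters: in the Lorentzian case the orbits of spacelike and timelike spinors are distinct. Fixing a reference spinor $\psi_0$ of norm $\epsilon$ therefore identifies $\epsilon$-unit spinor fields with sections $\sigma\colon M\to P$ via $\psi=[\sigma,\psi_0]$.

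\textbf{Step 2: translate the Killing equation.} Write the pullback of the connection as $\sigma^*\omega$, a $\mathfrak{spin}(p,q)\cong\Lambda^2$-valued $1$-form. Then
\[
\nabla_X\psi=[\sigma,\sigma^*\omega(X)\cdot\psi_0].
\]
Because the stabilizer is trivial, every endomorphism of the form $v\mapsto A(X)\cdot v$ is realized by a Clifford action. Using the three-dimensional identification of bivectors with vectors via the Hodge star, $e_i\cdot e_j\cdot=\pm e_k\cdot$ on $\Sigma$, the condition $\nabla_X\psi=A(X)\cdot\psi$ says exactly that $\sigma^*\omega$, transported to $TM$ through the frame $\sigma$ and the star, equals $A$ up to a constant factor. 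The generalized Killing condition, namely that $A$ is $g$-symmetric and real or imaginary as appropriate, then becomes the statement that the $\Lambda^2\cong\Lambda^1$-valued form $\star\sigma^*\omega\in T^*M\otimes TM$ is $g$-symmetric. Conversely, any section whose connection form has this symmetry produces a generalized Killing spinor with $A:=\star\sigma^*\omega$. Steps 1 and 2 together give the bijection between $\epsilon$-unit generalized Killing spinors and sections of $P$ satisfying this symmetry condition. The reality of $A$ versus its imaginarity is governed by $\epsilon$ and the signature, which I would check case by case.

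\textbf{Step 3: the Lagrangian condition.} The structure $(\scG_c,\scJ_c)$ is presumably defined using $\omega$ to split $TP=H\oplus V$, with $H\cong\pi^*TM$ and $V\cong\mathfrak{spin}\cong\bR^3$ identified through the frame. The almost complex structure $\scJ_c$ then maps $H\to V$ as $c$ times this identification, and $\scG_c$ is the diagonal metric. Its Kähler form is
\[
\Omega(u,v)=\scG_c(\scJ_c u,v),
\]
which pairs horizontal and vertical parts. I would compute $\sigma^*\Omega$. Decomposing $d\sigma(X)=X^H+(\sigma^*\omega(X))^*$, one gets
\[
\sigma^*\Omega(X,Y)=c\bigl(g(X,\star\sigma^*\omega(Y))-g(\star\sigma^*\omega(X),Y)\bigr),
\]
which vanishes iff $A$ is symmetric. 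Since $\dim P=6$ and $\sigma(M)$ is three-dimensional, isotropic means Lagrangian. This should be compared with Moroianu--Semmelmann's divergence-free frame condition, since the antisymmetric part of the connection matrix encodes the divergences.

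\textbf{Main obstacle.} The main obstacle is getting the signs and factors exactly right, so that the constant $c$ in $(\scG_c,\scJ_c)$ matches the Clifford normalization for each signature and each $\epsilon$. Related to this is verifying that the trace part of $A$, which is not constrained, does not obstruct the argument. The symmetric tensor $A$ is allowed to be arbitrary, so only the skew part matters, and I expect the skew part to be precisely what $\Omega$ detects.

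\textbf{Lorentzian count.} In the Lorentzian case, $\psi$ and its spacelike or timelike counterpart correspond to the same section when paired with two different reference spinors $\psi_0$, one for each $\epsilon$. This gives the two-to-one count in the statement.
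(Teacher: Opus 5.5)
Your proposal is correct and is essentially the paper's proof. With $\sigma=s_F$ and $\psi_0=\mathbb{1}_\epsilon$, your Step~2 is Proposition~\ref{pr:GKS-condition} read in the gauge $s_F$, where $\scM_{s_F}(X_F)=(s_F^*\omegatilde)(\scX)$ by \eqref{eq:scM2}, so that $A=\frac12 l_g^{-1}\bfM$. Your Step~3 is Proposition~\ref{pr:Lag-sym}, with $s_F^*\Omega(\scX,\scY)=\la X_F,l_g^{-1}\scM_{s_F}(Y_F)\ra-\la Y_F,l_g^{-1}\scM_{s_F}(X_F)\ra$.

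Two of your guesses need harmless corrections. First, $\Omega$ is independent of $c$, so there is no overall factor $c$. Second, the factor $(\sqrt{-1})^r$ in front of $A$ depends only on the signature, not on $\epsilon$; hence the symmetry condition is the same for both causal types, which is exactly why one section yields one spacelike and one timelike generalized Killing spinor.
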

	Note that these Lagrangian sections are all non symplectic Lagrangian but almost symplectic Lagrangian as in \pref{rm:non-symplectic_Lagrangian}.
	Then it is natural to investigate when an almost complex structure becomes integrable and when an almost pseudo-Hermitian structure becomes nearly pseudo-K\"ahler structure defined on the principal spin bundle, both of which are obtained in \pref{th:integrability-complex} and \pref{th:NK-str} respectively.
	These conditions are so restrictive that constant sectional curvature is necessary; the former condition requires the sectional curvature to be $(-1)^{r-1}c^2$, whereas the latter requires it to be $\frac{(-1)^r}{3}c^2$.
	In a related direction, Zentner studied almost complex structures on the total spaces of principal bundles \cite{zentner2013integrable}.
	It would also be interesting to investigate the classification of almost pseudo-Hermitian structures on principal spin bundles.
	This question is naturally related to the Gray--Hervella classification of almost Hermitian structures into sixteen classes \cite{gray1980sixteen}, as well as to its pseudo-Hermitian counterpart discussed in \cite{brozos2012geometric}.

	The Lorentzian analogue of the three-dimensional sphere is the anti-de Sitter three-spacetime $(\AdS_3)$, a Lorentzian space form of constant negative sectional curvature.
	Following the method based on Hopf vector fields on $\bS^3$ \cite{moroianu2014generalized_Einstein, moroianu2014generalized}, we construct examples of $\epsilon$-unit generalized Killing spinors on $\AdS_3$ and the corresponding nearly pseudo-K\"ahler Lagrangian sections on its principal spin bundle, which is identified with $\AdS_3 \times \AdS_3$. These examples are summarized in \pref{tb:GKS-group-space-forms} and \ref{tb:H-valued-functions-group-space-forms}, together with the corresponding results for $\bS^3$ \cite{moroianu2014generalized}.
	In Lorentzian signature, a symmetric endomorphism associated with a generalized Killing spinor is not necessarily diagonalizable. We also construct a family of $\epsilon$-unit generalized Killing spinors with non-diagonalizable symmetric endomorphisms on $\AdS_3$.
	The corresponding family of Lagrangian submanifolds in the nearly pseudo-K\"ahler manifold $\AdS_3\times\AdS_3$ appears as one of the cases in the classification of extrinsically homogeneous Lagrangian submanifolds given in \cite{anarella2026extrinsically}. The construction of the associated generalized Killing spinors and Lagrangian sections is described in detail in \pref{sc:null-Jordan-calculations}. 
	
	For a Lie group $G$, Artacho classified left-invariant generalized Killing spinors in \cite[Theorem~4.2]{artacho2025generalised}. In our framework, invariant spinors correspond to $G$-equivariant sections of the principal spin bundle, which are represented by constant maps with respect to the natural left-invariant trivialization. In \pref{th:invariant-GKS-unimodular}, we show that the following conditions are equivalent: $G$ is unimodular, $G$ admits an invariant generalized Killing spinor of unit length, and the principal spin bundle admits a $G$-equivariant Lagrangian section. Moreover, whenever these conditions hold, every invariant spinor of unit length is a generalized Killing spinor and every $G$-equivariant section is Lagrangian. This recovers part of Artacho's result and extends it to Lorentzian signature. By contrast, in \pref{eg:Aff}, we construct non-invariant generalized Killing spinors on a non-unimodular Lie group of Bianchi type~III whose corresponding Lagrangian sections are represented by nonconstant maps.
	
	At the end of this paper, we construct examples of $\epsilon$-unit generalized Killing spinors on nonhomogeneous $\bR^3$ to generalize the left-invariant metric of $\operatorname{Sol} $ in \pref{eg:sol}.  
	In the final \pref{sc:nonhomogeneous}, we construct two types of Lagrangian sections. One of which corresponds to sections whose divergence-free frames are equal to the same frame appeared as left-invariant frame \eqref{eq:unimodular-invariant-frame} in \pref{eg:sol}. The other corresponds to non-trivial Lagrangian sections as generalization of non-invariant generalized Killing spinors on $\operatorname{Sol}$.
	
	\section{The Lagrangian correspondence}\label{sc:Lag-GKS-Correspondence}
	\subsection{Almost pseudo-Hermitian structure on the spin bundle}
	Let $(M=M^{r,3-r},g)$ be a pseudo-Riemannian spin manifold of dimension three, where $r=0$ corresponds to the Riemannian case and $r=1$ corresponds to the Lorentzian case.
	A spin structure on $(M,g)$ is a pair $(P,\Lambda)$ of a principal spin bundle $\pi\colon P\to M$ and a bundle homomorphism $\Lambda \colon P\to \SO(M,g) $ such that the following diagram commutes.
	\begin{equation*}\label{eq:spin-str}
		\def\objectstyle{\scriptstyle}
		\def\labelstyle{\scriptscriptstyle}
		\xymatrix{
			P\times \Spin_0(r,3-r)\ar[rr]^-{\Lambda \times \lambda}\ar[d]_-{R_\bullet(\bullet)}\ar@{}[rrd]|{\circlearrowright}&&\SO(M, g)\times \SO_0(\frakm)\ar[d]^-{R_\bullet(\bullet)}\\
			P\ar[rr]^-{\Lambda}\ar[dr]_-{\pi}&&\SO(M, g)\ar[dl]^-{\pr}\\
			&M\ar@{}[u]|{\circlearrowright}&
		}
	\end{equation*}
	In dimension three, we write \begin{equation*}
		(H,\frakh) = \begin{cases} (\SU(2), \mathfrak{su}(2)),& (r=0)\\ (\SU(1,1), \mathfrak{su}(1,1)),& (r=1) \end{cases}, \quad \frakm=(\bR^{r,3-r}, \la\bullet,\bullet\ra),
	\end{equation*} so that $H\simeq\Spin_0(r,3-r)$. We denote by \begin{equation*}
		\lambda\colon H\longrightarrow \SO_0(\frakm),\quad \lambda_*\colon \frakh\to \mathfrak{so}(\frakm)
	\end{equation*} the corresponding double covering homomorphism and its Lie algebra isomorphism.
	There is a canonical bundle isomorphism $TM\simeq P\times_\lambda \frakm.$
	In particular, each $ u \in P $ induces a linear isometry 
	\begin{equation*}
		\iota_u\colon \frakm\ni X\mapsto [u,X] \in T_{\pi(u)}M. 
	\end{equation*}

	Let $ s \in \Gamma(\scU, P) $ be a section and $\scX =[s,X] \in \Gamma(\scU;TM) $ for each $ X\in \frakm $ be a vector field on an open set $\scU\subset M$.
	Let $ (\theta, \omegatilde) $ be a pair of the solder form $\theta \in \Omega^1(P;\frakm) $ and the spin Levi-Civita connection form $\omegatilde \in \Omega^1(P;\frakh). $
	The corresponding Ehresmann connection is the following direct sum decomposition
	\begin{equation*}
		TP = \ker \omegatilde \oplus \ker d\pi.
	\end{equation*}  
	Since $ \ker d\pi = \ker \theta $, for every $ \scrX \in \Gamma(TP) $ and $u\in P, $ there exist unique $X\in \frakm $ and $ U\in \frakh $ such that
	\begin{equation*}
		\scrX|_u = X^H_u + U^\#_u,
	\end{equation*}
	where $X^H $ is a standard horizontal vector field satisfying $\theta(X^H) = X, $ and $U^\# $ is a fundamental vector field satisfying $ \omegatilde(U^\#)= U.$
	Thus, there exists a unique linear isomorphism
	\[ T_uP \ni \scrX_u \mapsto (\theta_u(\scrX_u),\omegatilde_u(\scrX_u)) \in \frakm \oplus \frakh, \quad \forall u\in P. \]
	
	Recall that an almost pseudo-Hermitian structure on an even dimensional manifold $M$ is a pair $(g,\scJ)$ of pseudo-Riemannian structure $g$ and an almost complex structure $\scJ$ on $M$ with
	\begin{equation*}
		g(\scJ \scX,\scJ \scY) = g(\scX,\scY), \quad \forall \scX,\scY\in \Gamma(TM).
	\end{equation*}
	We construct an almost pseudo-Hermitian structure on $P$ via the Ehresmann connection.
	\begin{definition}
		A linear map $ l_g \colon \frakm \to \frakh $ is given by 
		\begin{equation*}
			\lambda_*(l_g(X))Y = X\times Y, \quad \forall X,Y\in \frakm.
		\end{equation*}
	\end{definition}
	\begin{lemma}\label{lm:lg-Ad-isom}
		The linear map $l_g$ is an $H$-equivariant linear isomorphism, where $H$ acts on $\frakm$ through $\lambda$ and on $\frakh$ through the adjoint representation. More precisely,
		\begin{equation*}
			l_g(\lambda(h)X) = \Ad_h(l_g(X)), \quad \forall h\in H, \forall X\in \frakm.
		\end{equation*}
		Moreover, $l_g$ intertwines the cross product on $\frakm$ with the Lie bracket on $\frakh$:
		\begin{equation}\label{eq:lg-cross}
			l_g(X\times Y ) = [l_gX,l_gY], \quad \forall X,Y\in \frakm.
		\end{equation}
		Thus $l_g$ identifies the $H$-module $\frakm$, equipped with the cross product, with $\frakh$, equipped with its Lie bracket.
	\end{lemma}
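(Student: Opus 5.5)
First, $l_g$ is well defined and linear. For fixed $X\in\frakm$, the map $Y\mapsto X\times Y$ is skew-adjoint with respect to $\la\bullet,\bullet\ra$, because $\la X\times Y,Z\ra=\det(X,Y,Z)$ relative to a fixed oriented orthonormal basis. So it lies in $\mathfrak{so}(\frakm)$. Since $\lambda_*$ is a Lie algebra isomorphism, $l_g(X):=\lambda_*^{-1}(X\times\bullet)$ is uniquely defined, and it is linear in $X$.

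Next, $l_g$ is injective, hence an isomorphism. If $X\times Y=0$ for all $Y$, then $X=0$: taking $Y$ in a basis, the nondegeneracy of the volume form forces every component of $X$ to vanish. Both spaces have dimension three, so $l_g$ is a linear isomorphism.

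For equivariance, I use that $\lambda(h)\in\SO_0(\frakm)$ preserves the metric and the orientation, hence the volume form. Therefore $\lambda(h)(X\times Y)=(\lambda(h)X)\times(\lambda(h)Y)$. Writing $A=\lambda(h)$, this gives
\begin{equation*}
(AX)\times Y = A\bigl(X\times (A^{-1}Y)\bigr),
\end{equation*}
so as endomorphisms $\lambda_*(l_g(AX)) = A\,\lambda_*(l_gX)\,A^{-1}$. Since $\lambda$ is a homomorphism, $\lambda_*\circ\Ad_h=\Ad_{\lambda(h)}\circ\lambda_*$. Injectivity of $\lambda_*$ then yields $l_g(\lambda(h)X)=\Ad_h l_g(X)$.

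For \eqref{eq:lg-cross}, I apply the isomorphism $\lambda_*$ to both sides. The right-hand side becomes the commutator $[\lambda_*l_gX,\lambda_*l_gY]$. So it suffices to show, for all $Z$,
\begin{equation*}
(X\times Y)\times Z = X\times(Y\times Z)-Y\times(X\times Z).
\end{equation*}
This is the Jacobi identity for the cross product. I would verify it from the signature-dependent expansion $(X\times Y)\times Z=\pm(\la X,Z\ra Y-\la Y,Z\ra X)$; the sign is the same in every term, so the identity holds in both signatures. Alternatively, one can differentiate the equivariance just proved at $h=e$: this gives $l_g(\lambda_*(U)X)=[U,l_gX]$, and setting $U=l_gY$ yields $l_g(Y\times X)=[l_gY,l_gX]$, which is the claim.

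The main obstacle is making the sign and orientation conventions of the cross product in signature $(r,3-r)$ explicit, since the paper has not fixed them. The differentiation route avoids this issue entirely, needing only that $\lambda(h)$ preserves $\times$, so I would use it.
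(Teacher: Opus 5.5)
Your proof is correct. The well-definedness, bijectivity and equivariance steps match the paper's. The paper phrases bijectivity as $\lambda_*\circ l_g$ being the Hodge star $\frakm\to\Lambda^2\frakm\simeq\mathfrak{so}(\frakm)$, and proves equivariance by the same conjugation computation $\lambda(h)\bigl(X\times\lambda(h)^{-1}Y\bigr)=\lambda(h)X\times Y$.

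For \eqref{eq:lg-cross} your preferred route is different. The paper sets $L_X=\lambda_*l_g(X)$ and invokes the Jacobi identity for the cross product to get $[L_X,L_Y]=L_{X\times Y}$ in $\mathfrak{so}(\frakm)$. It then pulls back by injectivity of $\lambda_*$; this is your first alternative. Your second route differentiates $l_g(\lambda(h)X)=\Ad_h l_g(X)$ at $h=\mathbf 1$ and substitutes $U=l_gY$. This gives the identity in one line and uses only the $\SO_0(\frakm)$-invariance of $\times$, with no triple-product expansion and no sign conventions.

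Your route also turns the Jacobi identity for $\times$ in signature $(r,3-r)$, which the paper takes for granted, into a consequence rather than an input. The derivation property $L_X(Y\times Z)=L_XY\times Z+Y\times L_XZ$ that the paper attributes to Jacobi is exactly the infinitesimal form of $\lambda(h)(Y\times Z)=\lambda(h)Y\times\lambda(h)Z$. So your argument packages the same mechanism more directly. What the paper's route gives in exchange is the intermediate statement that $X\mapsto X\times\bullet$ is itself a Lie algebra isomorphism $(\frakm,\times)\to\mathfrak{so}(\frakm)$, independent of the spin covering $\lambda$.
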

	\begin{proof}
		Since $\lambda_* $ is a Lie algebra isomorphism, $l_g\circ \lambda(h) = \Ad_h\circ l_g $ and 
		\begin{equation*}
			\lambda_*\circ l_g \colon\frakm
			\ni X\mapsto * X\in \lambda_*(\frakh)=\mathfrak{so}(\frakm)\simeq \Lambda^2\frakm
		\end{equation*}
		is a linear isomorphism.
		For every $h\in H, $ and $X,Y\in \frakm, $
		\begin{align*}
			\lambda_*(\Ad_hl_g(X))Y & = \lambda(h)\lambda_*(l_g(X))\lambda(h)^{-1}Y\\
			& = \lambda(h)(X\times\lambda(h)^{-1}Y)\\
			& = \lambda(h)X\times Y\\
			& = \lambda_*(l_g\lambda(h)(X))Y.
		\end{align*}
		
		Setting $L_X\coloneqq \lambda_*l_g(X) \in \mathfrak{so}(\frakm), $ by Jacobi identity, for every $X,Y,Z\in \frakm,$
		\begin{align*}
			L_X(Y\times Z) & = L_X Y \times Z+Y\times L_XZ,\\
			[L_X,L_Y](Z) & = X\times(Y\times Z) - Y\times(X\times Z)\\
			& = L_{X\times Y} (Z).
		\end{align*}
		Therefore,
		\begin{align*}
			\lambda_*([l_gX,l_gY])(Z) & = [\lambda_*l_g(X),\lambda_*l_g(Y)](Z)\\
			& = \lambda_*l_g(X\times Y)(Z).
		\end{align*}
		Since $\lambda_*\colon \frakh\to \mathfrak{so}(\frakm)$ is a linear isomorphism, 
		the assertion follows.
	\end{proof}
	Let $\ad P = P\times_{\Ad} \frakh $ be an associated bundle. From \pref{lm:lg-Ad-isom}, there is a bundle isomorphism 
	\begin{equation*}
		l_g\colon TM\ni [u,X] \mapsto [u, l_g(X)]\in \ad P
	\end{equation*}
	
	A direct computation yields the following proposition.
	\begin{proposition}\label{pr:almost-pseudo-Hermitian}
		For every $c>0$, we define a bundle endomorphism
		$\scJ_c\in\Gamma(\End(TP))$ by
		\begin{equation*}
			\scJ_c(X_u^H+U_u^\#) = -c^{-1}(l_g^{-1}U)^H_u+c(l_g X)^\#_u,
		\end{equation*}
		and a pseudo-Riemannian metric $\scG_c$ on $P$ by
		\begin{align*}
			\scG_c(X_u^H+U_u^\#,Y_u^H+V_u^\#)
			&=
			c\,\la X,Y\ra
			+c^{-1}\la l_g^{-1}U,l_g^{-1}V\ra.
		\end{align*}
		Then $(\scG_c,\scJ_c)$ is an almost
		pseudo-Hermitian structure on $P$.
	\end{proposition}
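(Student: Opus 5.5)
We check the three defining properties of an almost pseudo-Hermitian structure directly in the decomposition $T_uP=\ker\omegatilde_u\oplus\ker d\pi_u\simeq\frakm\oplus\frakh$ given by $(\theta_u,\omegatilde_u)$: $\scJ_c$ is a well-defined smooth endomorphism, $\scJ_c^2=-\mathrm{id}$, and $\scJ_c$ preserves the nondegenerate metric $\scG_c$. Everything reduces to linear algebra on $\frakm\oplus\frakh$, since $X\mapsto X^H_u$ and $U\mapsto U^\#_u$ are linear isomorphisms onto the horizontal and vertical subspaces.

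\emph{Well-definedness and smoothness.} For $\scrX\in\Gamma(TP)$ we have $X=\theta(\scrX)$ and $U=\omegatilde(\scrX)$, which are smooth. Then $\scJ_c\scrX$ is the unique vector with $\theta(\scJ_c\scrX)=-c^{-1}l_g^{-1}\omegatilde(\scrX)$ and $\omegatilde(\scJ_c\scrX)=c\,l_g\theta(\scrX)$. These are smooth $\frakm$- and $\frakh$-valued expressions in $\scrX$, so $\scJ_c$ is a smooth bundle endomorphism. Note that $l_g$ is a fixed linear isomorphism by \pref{lm:lg-Ad-isom}, so no equivariance is even needed at this step. Equivalently, $\scJ_c$ corresponds pointwise to the block matrix
\[
\begin{pmatrix} 0 & -c^{-1}l_g^{-1}\\ c\,l_g & 0\end{pmatrix}.
\]

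\emph{Square.} Applying the definition twice gives
\[
\scJ_c^2(X^H+U^\#)=\scJ_c\bigl(-c^{-1}(l_g^{-1}U)^H+c(l_gX)^\#\bigr)=-c^{-1}\bigl(l_g^{-1}c\,l_gX\bigr)^H+c\bigl(l_g(-c^{-1}l_g^{-1}U)\bigr)^\#,
\]
which equals $-X^H-U^\#$. Hence $\scJ_c^2=-\mathrm{id}$, and in particular $\dim P=6$ is even.

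\emph{Metric.} The form $\scG_c$ is symmetric and bilinear. It is nondegenerate because it is the orthogonal sum of $c\la\cdot,\cdot\ra$ on $\frakm$ and the pullback of $c^{-1}\la\cdot,\cdot\ra$ by the isomorphism $l_g^{-1}$ on $\frakh$, and $c>0$. It is smooth by the same argument as for $\scJ_c$. For compatibility, $\scJ_c(X^H+U^\#)$ has $\frakm$-component $-c^{-1}l_g^{-1}U$ and $\frakh$-component $c\,l_gX$. Therefore
\[
\scG_c(\scJ_c\cdot,\scJ_c\cdot)=c\,\la c^{-1}l_g^{-1}U,c^{-1}l_g^{-1}V\ra+c^{-1}\la c X,cY\ra=c^{-1}\la l_g^{-1}U,l_g^{-1}V\ra+c\la X,Y\ra,
\]
which is $\scG_c(X^H+U^\#,Y^H+V^\#)$. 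The two sign changes cancel in the product. The resulting metric has signature $(2r,6-2r)$.

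The only potential subtlety is to confirm that nothing depends on a choice. Both $\theta$ and $\omegatilde$ are globally defined on $P$, and $l_g$ is a fixed map, so the structure is globally defined on $P$ without any trivialization. The $H$-equivariance in \pref{lm:lg-Ad-isom} is not needed here; it would only matter for showing that the structure is $H$-invariant, which the statement does not assert. Thus the main obstacle is merely bookkeeping of the factors $c^{\pm1}$, and the proposition follows.
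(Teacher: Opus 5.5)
Your proof is correct and is essentially the same as the paper's, which offers only ``a direct computation'': you check $\scJ_c^2=-\mathrm{id}$, the nondegeneracy of $\scG_c$, and $\scG_c(\scJ_c\cdot,\scJ_c\cdot)=\scG_c$ in the splitting $T_uP\simeq\frakm\oplus\frakh$ given by $(\theta_u,\omegatilde_u)$. Your bookkeeping of the factors $c^{\pm1}$ is accurate, and you are right that the $H$-equivariance of $l_g$ is not needed for this statement.
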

	By \pref{pr:almost-pseudo-Hermitian}, $ (P,\scG_c,\scJ_c) $ is an almost pseudo-Hermitian manifold. Its fundamental two-form $\Omega, $ defined by
	\begin{equation*}
		\Omega(\scrX,\scrY) = \scG_c(\scJ_c\scrX,\scrY), \quad \forall \scrX,\scrY\in \Gamma(TP)
	\end{equation*}
	is independent of $c>0 $ and is given by
	\[ 	\Omega_u(X_u^H+U_u^\#,Y_u^H+V_u^\#) \\
	 = \la X,l_g^{-1}V \ra -\la l_g^{-1}U, Y\ra. \]
	
	\begin{remark}\label{rm:non-symplectic_Lagrangian}
		By \pref{lm:Hor and Ver Lie-bracket}, the fundamental two-form $\Omega$ is non-degenerate but never closed, therefore the corresponding section cannot be symplectic Lagrangian but almost symplectic Lagrangian.
		Indeed, for $\Xhat,\Yhat,\Zhat \in C^\infty(P,\frakm)^H $ and $ \Uhat,\Vhat,\What \in C^\infty(P,\frakh)^H, $
		\begin{align*}
			d\Omega (\Xhat^H,\Yhat^H,\Zhat^H) & = -\sum_{\text{cyc}}{\Omega([\Xhat^H,\Yhat^H],\Zhat^H)}\\
			& = \sum_{\text{cyc}}{\Omega(\widetilde\Omega(\Xhat^H,\Yhat^H)^\#,\Zhat^H)}\\
			& = -\sum_{\text{cyc}}{\la l_g^{-1}\widetilde\Omega(\Xhat^H,\Yhat^H), \Zhat \ra},\\
			d\Omega (\Xhat^H,\Yhat^H,\Uhat^\#) & = \Xhat^H\Omega(\Yhat^H,\Uhat^\#) - \Yhat^H\Omega(\Xhat^H,\Uhat^\#)\\
			& \quad  - \Omega([\Xhat^H,\Yhat^H],\Uhat^\#) - \Omega([\Yhat^H,\Uhat^\#], \Xhat^H) - \Omega([\Uhat^\#,\Xhat^H],\Yhat^H)\\
			& = \Xhat^H\la l_g^{-1}\Uhat, \Yhat \ra - \Yhat^H\la l_g^{-1}\Uhat , \Xhat\ra \\
			& \quad -\la l_g^{-1}\Uhat,\Xhat^H(\Yhat) - \Yhat^H(\Xhat) \ra + \la l_g^{-1}\Yhat^H(\Uhat),\Xhat \ra -\la l_g^{-1}\Xhat^H(\Uhat),\Yhat \ra\\
			& =0,\\
			d\Omega (\Xhat^H,\Uhat^\#,\Vhat^\#) & = - \Uhat^\#\Omega(\Xhat^H,\Vhat^\#) + \Vhat^\#\Omega(\Xhat^H,\Uhat^\#) - \Omega([\Uhat^\#,\Vhat^\#], \Xhat^H)\\
			& = -\la l_g^{-1}\Uhat\times l_g^{-1}\Vhat,\Xhat \ra ,\\
			d\Omega (\Uhat^\#,\Vhat^\#,\What^\#) & = 0.
		\end{align*}
		Thus, the term of $d\Omega (\Xhat^H,\Uhat^\#,\Vhat^\#)$ does not vanish for some $\Xhat,\Uhat,\Vhat.$
	\end{remark}
	
	\begin{lemma}\label{lm:Hor and Ver Lie-bracket}
		Let $ \Xhat,\Yhat \in C^\infty(P,\frakm)^H $ and $ \Uhat, \Vhat \in C^\infty(P,\frakh)^H $ be $H$-equivariant function.
		The following formulae hold.
		\begin{align*}
			\ld \Xhat^H,\Yhat^H\rd & = \lb \Xhat^H(\Yhat) - \Yhat^H(\Xhat) \rb^H - \widetilde\Omega(\Xhat^H,\Yhat^H)^\#,\\
			\ld \Uhat^\#,\Yhat^H \rd & = -\lb \Yhat^H(\Uhat)\rb^\#,\\
			\ld \Uhat^\#,\Vhat^\# \rd & = -[\Uhat,\Vhat]^\#.
		\end{align*}
		\begin{proof}
			These formulae
			\[ 0 = d\theta+\lambda_*\omegatilde\wedge\theta, \quad \widetilde\Omega = d\omegatilde + \omegatilde\wedge\omegatilde \]
			lead our assertions by the following direct calculations
			\begin{align*}
				0 =d\theta(\Xhat^H,\Yhat^H) & = \Xhat^H(\theta(\Yhat^H)) - \Yhat^H(\theta(\Xhat^H)) -\theta([\Xhat^H,\Yhat^H]),\\
				\theta([\Xhat^H,\Yhat^H]) & = \Xhat^H(\Yhat) - \Yhat^H(\Xhat),\\
				\widetilde\Omega(\Xhat^H,\Yhat^H) & = d\omegatilde(\Xhat^H,\Yhat^H) = -\omegatilde([\Xhat^H,\Yhat^H]). 
			\end{align*}
			Therefore, 
			\begin{equation}\label{eq:hori-bracket}
				\ld \Xhat^H,\Yhat^H\rd = \lb \Xhat^H(\Yhat) - \Yhat^H(\Xhat) \rb^H - \widetilde\Omega(\Xhat^H,\Yhat^H)^\#.
			\end{equation}
			The other formulae are also obtained by the similar calculations.
		\end{proof}
	\end{lemma}
	The Nijenhuis tensor $N_{\scJ_c}$ of the almost complex structure $\scJ_c$ is defined as follows
	\[ N_{\scJ_c}(\scrX,\scrY) = [\scJ_c(\scrX), \scJ_c(\scrY)] -\scJ_c[\scJ_c\scrX,\scrY] - \scJ_c[\scrX,\scJ_c\scrY] -[\scrX,\scrY],\quad  \forall \scrX,\scrY\in \Gamma(TP). \]
	\begin{lemma}\label{lm:cal-Nijenhuis}
		Let $\scrX, \scrY\in \Gamma(TP)^H$ be vector fields 
		\begin{align}
			\scrX & = \theta(\scrX)^H + \omegatilde(\scrX)^\#, \quad  \Xhat = \theta(\scrX) \in C^\infty(P,\frakm)^H, \quad \omegatilde(\scrX)=\Uhat \in C^\infty(P,\frakh)^H, \notag\\
			\scrY & = \theta(\scrY)^H + \omegatilde(\scrY)^\#,\quad  \Yhat = \theta(\scrY) \in C^\infty(P,\frakm)^H, \quad \omegatilde(\scrY)=\Vhat \in C^\infty(P,\frakh)^H.\notag
		\end{align}
		Then we have 
		\begin{align}
			N_{\scJ_c}(\Xhat^H,\Yhat^H) & = -c^2[l_g\Xhat,l_g\Yhat]^\# + \widetilde\Omega(\Xhat^H,\Yhat^H)^\#,\notag\\
			N_{\scJ_c}(\Xhat^H,\Vhat^\#) & = -l_g^{-1
			}\lc [l_g\Xhat, \Vhat] - c^{-2}\widetilde\Omega\lb \Xhat^H,(l_g^{-1}\Vhat)^H\rb \rc^H ,\label{eq:N_J-cal}\\
			N_{\scJ_c}(\Uhat^\#,\Vhat^\#) & = [\Uhat,\Vhat]^\# - c^{-2}\widetilde\Omega\lb (l_g^{-1}\Uhat)^H,(l_g^{-1}\Vhat)^H\rb^\#.\notag
		\end{align}
	\end{lemma}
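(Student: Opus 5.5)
The plan is to compute $N_{\scJ_c}$ directly from its definition, feeding in the bracket formulae of \pref{lm:Hor and Ver Lie-bracket}. Since $N_{\scJ_c}$ is a tensor and every $H$-invariant vector field decomposes uniquely as $\scrX=\Xhat^H+\Uhat^\#$, bilinearity reduces the computation to the three cases $(\Xhat^H,\Yhat^H)$, $(\Xhat^H,\Vhat^\#)$ and $(\Uhat^\#,\Vhat^\#)$. Throughout I will use two facts. First, for $H$-equivariant $\Xhat$, the function $l_g\Xhat$ is again $H$-equivariant by \pref{lm:lg-Ad-isom}, so the bracket formulae apply to $(l_g\Xhat)^\#$ and $(l_g^{-1}\Uhat)^H$. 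Second, since $l_g$ is a constant linear map, it commutes with differentiation: $\Yhat^H(l_g\Xhat)=l_g(\Yhat^H\Xhat)$ and $\Uhat^\#(l_g^{-1}\Vhat)=l_g^{-1}(\Uhat^\#\Vhat)$.

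For the horizontal--horizontal case, $\scJ_c\Xhat^H=c(l_g\Xhat)^\#$ and $\scJ_c\Yhat^H=c(l_g\Yhat)^\#$.
\begin{itemize}
\item The first term is $[\scJ_c\Xhat^H,\scJ_c\Yhat^H]=c^2[(l_g\Xhat)^\#,(l_g\Yhat)^\#]=-c^2[l_g\Xhat,l_g\Yhat]^\#$, by the third bracket formula.
\item By the second bracket formula, $[c(l_g\Xhat)^\#,\Yhat^H]=-c\,l_g(\Yhat^H\Xhat)^\#$. Applying $\scJ_c$ gives $(\Yhat^H\Xhat)^H$, so the term $-\scJ_c[\scJ_c\Xhat^H,\Yhat^H]$ contributes $-(\Yhat^H\Xhat)^H$.
\item Symmetrically, $-\scJ_c[\Xhat^H,\scJ_c\Yhat^H]$ contributes $+(\Xhat^H\Yhat)^H$.
\item Subtracting $[\Xhat^H,\Yhat^H]$ via the first bracket formula cancels these two horizontal derivative terms and leaves $+\widetilde\Omega(\Xhat^H,\Yhat^H)^\#$.
\end{itemize}
This gives the first line of the statement.

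The vertical--vertical case runs the same way with the roles of the two summands exchanged. Now $\scJ_c\Uhat^\#=-c^{-1}(l_g^{-1}\Uhat)^H$. The bracket $[\scJ_c\Uhat^\#,\scJ_c\Vhat^\#]$ of two horizontal fields produces a horizontal derivative part and the curvature term $-c^{-2}\widetilde\Omega((l_g^{-1}\Uhat)^H,(l_g^{-1}\Vhat)^H)^\#$. The two mixed terms produce vertical derivative terms, and these should cancel the derivative part. Finally $-[\Uhat^\#,\Vhat^\#]=+[\Uhat,\Vhat]^\#$, which gives the third line.

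The mixed case $(\Xhat^H,\Vhat^\#)$ is where I expect the real bookkeeping. Here $\scJ_c\Xhat^H=c(l_g\Xhat)^\#$ and $\scJ_c\Vhat^\#=-c^{-1}(l_g^{-1}\Vhat)^H$.
\begin{itemize}
\item The term $[\scJ_c\Xhat^H,\scJ_c\Vhat^\#]$ is a vertical--horizontal bracket, hence vertical and purely a derivative term.
\item The term $\scJ_c[\Xhat^H,\scJ_c\Vhat^\#]$ involves a horizontal--horizontal bracket. It therefore contains, after applying $\scJ_c$, the horizontal field $c^{-2}\,l_g^{-1}\widetilde\Omega(\Xhat^H,(l_g^{-1}\Vhat)^H)$.
\item The term $\scJ_c[\scJ_c\Xhat^H,\Vhat^\#]$ involves a vertical--vertical bracket. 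It contributes $-[l_g\Xhat,\Vhat]^\#$, which $\scJ_c$ sends to the horizontal field $c^{-1}\cdot c\,(l_g^{-1}[l_g\Xhat,\Vhat])^H$, together with derivative pieces.
\item The remaining term is $-[\Xhat^H,\Vhat^\#]=-(\Xhat^H\Vhat)^\#$.
\end{itemize}
The main obstacle is checking that all the first-order derivative terms cancel, both the vertical ones and those mapped horizontal by $\scJ_c$. For this I will use the commutation of $l_g$ with derivatives noted above, and the $H$-equivariance identity $\Uhat^\#\Xhat=-\lambda_*(\Uhat)\Xhat$ for equivariant functions. I expect this identity is needed precisely because $\lambda_*(\Vhat)l_g^{-1}$-type terms must combine into the bracket $[l_g\Xhat,\Vhat]$, via $l_g(\lambda_*(V)X)=[V,l_gX]$, the infinitesimal form of \pref{lm:lg-Ad-isom}. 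Tracking the signs then yields \eqref{eq:N_J-cal}, and I would double-check them against the antisymmetry $N(\Vhat^\#,\Xhat^H)=-N(\Xhat^H,\Vhat^\#)$.
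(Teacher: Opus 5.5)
Your proposal is correct and is essentially the paper's own argument: expand $N_{\scJ_c}$ on horizontal--horizontal, mixed and vertical--vertical pairs using the bracket formulae for $\Xhat^H$ and $\Uhat^\#$. The horizontal--horizontal case is written out exactly as you do it, the other two cases are left to the same bookkeeping, and that bookkeeping does close up with all derivative terms cancelling. One small correction for the mixed case: $[\scJ_c\Xhat^H,\Vhat^\#]=-c[l_g\Xhat,\Vhat]^\#$ comes straight from the vertical bracket formula with no leftover derivative pieces, because the equivariance identities (yours, $\Uhat^\#\Xhat=-\lambda_*(\Uhat)\Xhat$, and its adjoint analogue $\Uhat^\#\Vhat=-[\Uhat,\Vhat]$) are already built into those bracket formulae, so you need not invoke them separately.
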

	\begin{proof}
		By \pref{eq:hori-bracket},
		\begin{align*}
			N_{\scJ_c}\lb \Xhat^H,\Yhat^H \rb & = c^2[(l_g\Xhat)^\#, (l_g\Yhat)^\#] - \scJ_c[c(l_g\Xhat)^\#,\Yhat^H] - \scJ_c[\Xhat^H,c(l_g\Yhat)^\#] -[\Xhat^H,\Yhat^H],\\
			& =  c^2[(l_g\Xhat)^\#, (l_g\Yhat)^\#] - \lb l_g^{-1}\Yhat^H(l_g\Xhat) \rb^H + \lb l_g^{-1}\Xhat^H(l_g\Yhat) \rb^H -[\Xhat^H,\Yhat^H],\\
			& = -c^2[l_g\Xhat,l_g\Yhat]^\# +\widetilde\Omega\lb \Xhat^H,\Yhat^H \rb^\#.
		\end{align*}
		The other components are obtained by similar calculations.
	\end{proof}
	Newlander and Nirenberg \cite{newlander1957complex} showed that an arbitrary almost complex structure is integrable if and only if its Nijenhuis tensor vanishes.
	Since \pref{eq:N_J-cal} are exactly the same condition under $l_g$, we have the following theorem.
	\begin{theorem}\label{th:integrability-complex}
		The almost complex structure $\scJ_c$ on $P$ is a complex structure if and only if $ (M,g) $ has constant sectional curvature $(-1)^{r-1}c^2$.
	\end{theorem}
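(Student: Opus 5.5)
Using the Newlander--Nirenberg theorem, I will show that $N_{\scJ_c}\equiv 0$ if and only if the spin curvature has the form $\widetilde\Omega(\Xhat^H,\Yhat^H)=c^2[l_g\Xhat,l_g\Yhat]$, and then identify that condition with constant sectional curvature $(-1)^{r-1}c^2$. Since $N_{\scJ_c}$ is tensorial and $TP$ is spanned at each point by $\Xhat^H$ and $\Uhat^\#$ with $H$-equivariant $\Xhat,\Uhat$, \pref{lm:cal-Nijenhuis} reduces vanishing of $N_{\scJ_c}$ to vanishing of its three displayed components.

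First I treat the purely horizontal component: $N_{\scJ_c}(\Xhat^H,\Yhat^H)=0$ if and only if
\begin{equation*}
\widetilde\Omega(X^H,Y^H)=c^2[l_gX,l_gY]=c^2 l_g(X\times Y)
\end{equation*}
for all $X,Y\in\frakm$, using \eqref{eq:lg-cross}. Next I check that this single condition also kills the other two components. Substituting $U=l_gX$ and $V=l_gY$ in the third component gives $[U,V]-c^{-2}c^2[U,V]=0$. Substituting $\Xhat$ and $Y=l_g^{-1}\Vhat$ in the second component gives $[l_gX,V]-[l_gX,l_gY]=0$. So the three components are the same condition transported by $l_g$, and integrability is equivalent to the displayed curvature identity.

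It remains to translate this identity into a statement about the Riemann tensor. Apply $\lambda_*$ to both sides. The Levi-Civita curvature satisfies $\lambda_*\widetilde\Omega(X^H,Y^H)=\Omega^{LC}(X^H,Y^H)$, which acts on $\frakm$ as $R(X,Y)$ in the frame $u$, up to the paper's sign conventions. By the definition of $l_g$, the right-hand side becomes $Z\mapsto c^2 (X\times Y)\times Z$.

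The key computation is the double cross product identity in signature $(r,3-r)$. I expect
\begin{equation*}
(X\times Y)\times Z=(-1)^r\big(\la X,Z\ra Y-\la Y,Z\ra X\big),
\end{equation*}
where the factor $(-1)^r$ arises from the determinant sign of the metric when the Hodge star is applied twice. The condition therefore reads $R(X,Y)Z=\kappa(\la Y,Z\ra X-\la X,Z\ra Y)$ with $\kappa=\pm(-1)^{r}c^2$, the overall sign fixed by the convention $\widetilde\Omega=d\omegatilde+\omegatilde\wedge\omegatilde$. This is exactly constant sectional curvature $\kappa$. The converse holds because every step was an equivalence; in particular $\lambda_*$ is an isomorphism.

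The main obstacle is the sign bookkeeping. It enters in three places: the convention relating $\widetilde\Omega$ to $R$, the convention for the cross product in Lorentzian signature, and the orientation of $l_g$. Together these must give $(-1)^{r-1}c^2$. I will fix the sign by a sanity check on $\bS^3$ with $r=0$. There the expected result is curvature $-c^2$, so the complex case is hyperbolic-like, consistent with $\scJ_c$ being integrable on the flat-model deformation. Once the Riemannian sign is fixed, the Lorentzian sign follows from the extra factor $(-1)^r$ in the double cross product.
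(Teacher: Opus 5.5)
Your reduction follows the paper's proof and is sound up to the last step. Newlander--Nirenberg together with \pref{lm:cal-Nijenhuis} turns integrability into $\widetilde\Omega(X^H,Y^H)=c^2[l_gX,l_gY]=c^2l_g(X\times Y)$. Your substitution showing that this single identity also kills the mixed and purely vertical components is correct; the paper only asserts this. Your identity $(X\times Y)\times Z=(-1)^r\bigl(\la X,Z\ra Y-\la Y,Z\ra X\bigr)$ is exactly the one the paper uses.

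The genuine gap is the sign. You end with $\kappa=\pm(-1)^rc^2$ and leave the sign open, but that sign is the actual content of the theorem. For $r=0$ it separates $\bH^3$ from $\bS^3$, and for $r=1$ it separates $\dS_3$ from $\AdS_3$. Your proposed fix does not decide it. The ``sanity check on $\bS^3$'' simply declares the expected answer to be $-c^2$, which is the conclusion being proved. Moreover, $\bS^3$ has curvature $+1$, so it could only serve as a negative test. That test would need an independent proof that this particular $\scJ_c$ on $\SU(2)\times\SU(2)$ has $N_{\scJ_c}\neq0$. You do not supply it, and it is not automatic, since $\bS^3\times\bS^3$ does carry integrable complex structures.

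The missing ingredient is the sign-free identification of $\widetilde\Omega$ with the Riemann tensor. The paper uses it in the form $R(\scX,\scY)\scZ=[s,\lambda_*(s^*\widetilde\Omega(\scX,\scY))Z]$. Put $A=s^*\omegatilde$ for a local section $s$. For $\scZ=[s,Z]$ one has $\nabla_\scX\scZ=[s,\scX(Z)+\lambda_*(A(\scX))Z]$, and a direct expansion gives
\begin{equation*}
[\nabla_\scX,\nabla_\scY]-\nabla_{[\scX,\scY]}=\lambda_*\bigl(dA+A\wedge A\bigr)(\scX,\scY)=\lambda_*\bigl(s^*\widetilde\Omega\bigr)(\scX,\scY).
\end{equation*}
Here $dA(\scX,\scY)=\scX(A(\scY))-\scY(A(\scX))-A([\scX,\scY])$. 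This is the same convention that yields $\widetilde\Omega(X^H,Y^H)=-\omegatilde([X^H,Y^H])$ in \pref{lm:Hor and Ver Lie-bracket}, and hence the formula in \pref{lm:cal-Nijenhuis} you started from. So no extra sign appears, and your own double cross product identity gives
\begin{equation*}
R(\scX,\scY)\scZ=c^2\,[s,(X\times Y)\times Z]=(-1)^{r-1}c^2\bigl(g(\scY,\scZ)\scX-g(\scX,\scZ)\scY\bigr).
\end{equation*}
Therefore $\kappa=g(R(\scX,\scY)\scY,\scX)/\bigl(g(\scX,\scX)g(\scY,\scY)-g(\scX,\scY)^2\bigr)=(-1)^{r-1}c^2$. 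Replace the sanity check with this computation, and for the converse add the standard fact that constant sectional curvature $\kappa$ forces $R$ into this form. Your argument then coincides with the paper's.
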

	\begin{proof}
		By \pref{lm:cal-Nijenhuis} and Newlander--Nirenberg theorem, the almost complex structure $\scJ_c$ is integrable, if and only if 
		\[ \widetilde\Omega(\Xhat^H,\Yhat^H) = c^2[l_g\Xhat, l_g\Yhat], \quad \forall \Xhat,\Yhat\in C^\infty(P,\frakm)^H. \]
		By \pref{eq:lg-cross} in \pref{lm:lg-Ad-isom}, for every section $s \in \Gamma(\scU,P), $ and put $ \scX = [s, X], X,Y,Z\in \frakm$
		\begin{align*}
			R(\scX,\scY)\scZ & = \ld s,\lambda_*\lb s^*\widetilde\Omega\lb \scX,\scY \rb \rb Z \rd,\\
			& = \ld s,c^2[\lambda_*l_gX,\lambda_*l_gY]Z \rd,\\
			& = \ld s,c^2(X\times Y)\times Z \rd,\\
			& = (-1)^rc^2 \lb g(\scX,\scZ)\scY - g(\scY,\scZ)\scX \rb.
		\end{align*}
		Therefore, the sectional curvature $\kappa$ of base manifold $(M,g)$ spanned by $\{\scX,\scY\}$ is
		\[ \kappa = \frac{g(R(\scX,\scY)\scY,\scX)}{g(\scX,\scX)g(\scY,\scY)-g(\scX,\scY)^2} = (-1)^{r-1}c^2. \]
	\end{proof}
	
	An almost pseudo-Hermitian manifold $(M,g,\scJ)$ is called nearly pseudo-K\"ahler if
	\[
	(\nabla_\scX \scJ)\scX=0
	\]
	for all vector fields $\scX\in\Gamma(TM)$, where $\nabla$ is the Levi-Civita
	connection of $(M,g)$. Equivalently,
	\[
	(\nabla_\scX \scJ)\scY+(\nabla_\scY \scJ)\scX=0
	\]
	for all vector fields $\scX,\scY$.
	The following Levi-Civita connection formulae follow from the Koszul formula.
	\begin{lemma}
		Let $\nablabar$ be the Levi-Civita connection of $(P,\mathcal G_c)$.
		For $\widehat X,\widehat Y\in C^\infty(P,\frakm)^H$ and
		$\widehat U,\widehat V\in C^\infty(P,\mathfrak h)^H$, the horizontal and
		vertical components of $\nablabar$ are given by
		\begin{align*}
			\nablabar_{\Xhat^H}\Yhat^H & = \lb \Xhat^H(\Yhat) \rb^H -\frac{1}{2}\widetilde\Omega\lb \Xhat^H,\Yhat^H \rb^\#,\\
			\nablabar_{\Xhat^H}\Vhat^\# & = \scT_c(\Xhat,\Vhat)^H + \lb \Xhat^H(\Vhat) \rb^\#,\\
			\nablabar_{\Uhat^\#}\Yhat^H & = \scT_c(\Yhat,\Uhat)^H,\\
			\nablabar_{\Uhat^\#}\Vhat^\# & = -\frac{1}{2}[\Uhat,\Vhat]^\#,
		\end{align*}
		where $\scT_c(\Xhat,\Vhat) \in C^\infty(P,\frakm)^H $ is given by
		\[ \la \scT_c(\Xhat, \Vhat),\Zhat\ra = \frac{1}{2c^2}\la l_g^{-1}\widetilde\Omega\lb \Xhat^H,\Zhat^H\rb, l_g^{-1}\Vhat\ra. \]
		
	\end{lemma}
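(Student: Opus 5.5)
The plan is to apply the Koszul formula for $\scG_c$ to the horizontal and vertical lifts of $H$-equivariant functions, using the bracket relations of \pref{lm:Hor and Ver Lie-bracket}. Both sides of each formula are tensorial in the arguments apart from the derivative terms, and the lifts $\Xhat^H,\Uhat^\#$ span $T_uP$ at every point. So it suffices to determine the horizontal and vertical components of $\nablabar$ by pairing with $\Zhat^H$ and $\What^\#$, for equivariant $\Zhat,\What$.

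Step 1 records the metric data. The metric coefficients are $\scG_c(\Xhat^H,\Yhat^H)=c\la\Xhat,\Yhat\ra$, $\scG_c(\Uhat^\#,\Vhat^\#)=c^{-1}\la l_g^{-1}\Uhat,l_g^{-1}\Vhat\ra$, and mixed pairings vanish. We also need the derivatives of these coefficients. Horizontal derivatives act on the functions: for example, $\Xhat^H\la \Yhat,\Zhat\ra=\la\Xhat^H(\Yhat),\Zhat\ra+\la\Yhat,\Xhat^H(\Zhat)\ra$. Vertical derivatives of equivariant functions are determined by equivariance, $U^\#\Yhat=-\lambda_*(U)\Yhat$ and $U^\#\Vhat=-[U,\Vhat]$. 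Because the brackets in \pref{lm:Hor and Ver Lie-bracket} already account for this (e.g.\ $[\Uhat^\#,\Yhat^H]=-(\Yhat^H\Uhat)^\#$ has no horizontal part), we use those bracket formulas directly. We also note that $\la\cdot,\cdot\ra$ on $\frakm$ is $\lambda$-invariant, so $\la l_g^{-1}[\Uhat,\Vhat],l_g^{-1}\What\ra=\la l_g^{-1}\Uhat\times l_g^{-1}\Vhat,l_g^{-1}\What\ra$ is totally skew, by \eqref{eq:lg-cross}.

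Step 2 does the case computations, writing $2\scG_c(\nablabar_A B,C)$ via the Koszul formula:
\begin{enumerate}
\item For $A,B,C$ all horizontal, the brackets have horizontal parts $\Xhat^H\Yhat-\Yhat^H\Xhat$, and we get the torsion-free-type term $c\la\Xhat^H(\Yhat),\Zhat\ra$.
\item For $A,B$ horizontal and $C=\What^\#$, only the bracket term $\scG_c([\Xhat^H,\Yhat^H],\What^\#)=-c^{-1}\la l_g^{-1}\widetilde\Omega(\Xhat^H,\Yhat^H),l_g^{-1}\What\ra$ survives, since mixed metric terms vanish and the other brackets are vertical-horizontal with vertical output. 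Halving gives the vertical component $-\tfrac12\widetilde\Omega^\#$.
\item For $\nablabar_{\Xhat^H}\Vhat^\#$ paired with $\Zhat^H$, the surviving terms are $-\scG_c(\Vhat^\#,[\Xhat^H,\Zhat^H])$ and the derivative terms, which cancel by the same mechanism. This yields $\frac1{2c}\la l_g^{-1}\widetilde\Omega(\Xhat^H,\Zhat^H),l_g^{-1}\Vhat\ra$. Dividing by $c$ from $\scG_c(\cdot^H,\Zhat^H)$ gives exactly the defining relation of $\scT_c$.
\item The vertical component of $\nablabar_{\Xhat^H}\Vhat^\#$ is $\Xhat^H(\Vhat)$, by a computation analogous to item (1).
\item For $\nablabar_{\Uhat^\#}\Yhat^H=\nablabar_{\Yhat^H}\Uhat^\#+[\Uhat^\#,\Yhat^H]$, the derivative term cancels the bracket, leaving $\scT_c(\Yhat,\Uhat)^H$.
\item For the purely vertical case, the fibers carry a bi-invariant-type metric. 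The Koszul formula with $[\Uhat^\#,\Vhat^\#]=-[\Uhat,\Vhat]^\#$, together with the total skewness noted in Step 1, gives $-\tfrac12[\Uhat,\Vhat]^\#$. The horizontal part vanishes because $[\Uhat^\#,\Zhat^H]$ is vertical.
\end{enumerate}

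The main obstacle is bookkeeping the derivative terms of non-constant equivariant functions along vertical directions and confirming that they cancel consistently. To stay safe, I would first prove the formulas for equivariant functions whose values are fixed and only then extend. Alternatively, I would note that each side satisfies the same Leibniz rule in the second argument and is $C^\infty(P)$-linear in the first, so the identity only needs checking on a local frame. Signs from the convention $\widetilde\Omega(\Xhat^H,\Yhat^H)=-\omegatilde([\Xhat^H,\Yhat^H])$ must be tracked carefully in items (2) and (3).
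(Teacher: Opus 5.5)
Your proposal is correct and follows the paper's route. The paper only asserts that these formulas follow from the Koszul formula, and your case-by-case evaluation reproduces all four identities and the defining relation of $\scT_c$. It uses the bracket lemma, the vanishing of vertical derivatives of $H$-invariant functions, and the vanishing of mixed pairings. One justification in item (6) needs fixing. The mixed brackets are vertical, but they are paired with $\Uhat^\#$ and $\Vhat^\#$, so they are not zero. The horizontal part of $\nablabar_{\Uhat^\#}\Vhat^\#$ vanishes because $-\scG_c([\Vhat^\#,\Zhat^H],\Uhat^\#)+\scG_c([\Zhat^H,\Uhat^\#],\Vhat^\#)$ cancels $-\Zhat^H\scG_c(\Uhat^\#,\Vhat^\#)$; more directly, $\scG_c(\nablabar_{\Uhat^\#}\Vhat^\#,\Zhat^H)=-\scG_c(\Vhat^\#,\nablabar_{\Uhat^\#}\Zhat^H)=0$ by item (5).
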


	\begin{theorem}\label{th:NK-str}
		The almost pseudo-Hermitian structure $ (\scG_c, \scJ_c) $ on $P$ is nearly pseudo-K\"ahler if and only if $(M,g)$ has constant sectional curvature $\frac{(-1)^{r}}{3}c^2.$
	\end{theorem}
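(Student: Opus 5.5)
We compute $(\nablabar_{\scrX}\scJ_c)\scrY + (\nablabar_{\scrY}\scJ_c)\scrX$ on the frame of horizontal and fundamental vector fields, using the Levi-Civita formulae of the preceding lemma. The condition is tensorial, so it suffices to take $\scrX,\scrY$ of the form $\Xhat^H$ or $\Uhat^\#$ with $H$-equivariant $\Xhat,\Uhat$. By bilinearity and symmetry there are three cases: horizontal--horizontal, horizontal--vertical, and vertical--vertical.

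In each case we expand $(\nablabar_{\scrX}\scJ_c)\scrY=\nablabar_{\scrX}(\scJ_c\scrY)-\scJ_c\nablabar_{\scrX}\scrY$. Here $\scJ_c\Xhat^H=c(l_g\Xhat)^\#$ and $\scJ_c\Uhat^\#=-c^{-1}(l_g^{-1}\Uhat)^H$. The equivariance of $l_g$ from \pref{lm:lg-Ad-isom} makes the derivative terms $\Xhat^H(\cdot)$ cancel, since $l_g$ commutes with horizontal differentiation of equivariant functions.

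\textbf{Vertical--vertical case.} Using $\nablabar_{\Uhat^\#}\Vhat^\#=-\frac12[\Uhat,\Vhat]^\#$ and $\nablabar_{\Uhat^\#}\Yhat^H=\scT_c(\Yhat,\Uhat)^H$, the symmetrized expression reduces to the horizontal term
\[-c^{-1}\bigl(\scT_c(l_g^{-1}\Vhat,\Uhat)+\scT_c(l_g^{-1}\Uhat,\Vhat)\bigr)^H,\]
since the bracket terms are skew and drop out.

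\textbf{Horizontal--horizontal case.} We obtain a horizontal term $\scT_c(\Yhat,l_g\Xhat)+\scT_c(\Xhat,l_g\Yhat)$. We also obtain a vertical term involving $\frac{c}{2}[l_g\Xhat,l_g\Yhat]$ and the symmetric part of $\widetilde\Omega$, but the latter vanishes because $\widetilde\Omega$ is skew.

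\textbf{Mixed case.} This gives a vertical term of the form
\[c\,l_g\scT_c(\Xhat,\Vhat)+\tfrac{c}{2}[l_g\Xhat,\Vhat]\;\text{type terms}-\tfrac{1}{2}c^{-1}\widetilde\Omega\bigl(\Xhat^H,(l_g^{-1}\Vhat)^H\bigr),\]
together with horizontal contributions from $\scT_c$.

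All conditions are then translated into statements about the curvature operator. Define $\mathcal R\colon \Lambda^2\frakm\to\frakh$, $\widetilde\Omega(X^H,Y^H)=\mathcal R(X,Y)$, and transport it via $l_g$ to a symmetric endomorphism $\rho$ of $\frakm$ by
\[l_g^{-1}\widetilde\Omega(X^H,Y^H)=\rho(X\times Y).\]
In dimension three this is possible because the curvature operator is self-adjoint on $\Lambda^2\simeq\frakm$ (pair symmetry). Then $\la\scT_c(X,V),Z\ra=\frac1{2c^2}\la\rho(X\times Z),l_g^{-1}V\ra$.

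The horizontal--horizontal condition becomes: $\la\rho(Y\times Z),X\ra+\la\rho(X\times Z),Y\ra=0$ for all $X,Y,Z$. Since $\rho$ is symmetric, this says that $\la\rho(Y\times Z),X\rangle$ is skew in $X,Y$, hence totally skew, hence a multiple of $\la X\times Y, Z\ra$ (up to the sign $(-1)^r$ coming from the signature in $(X\times Y)\times Z$). This forces $\rho=\mu\,\mathrm{id}$, so $(M,g)$ has constant curvature.

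Substituting $\rho=\mu\,\mathrm{id}$ into the mixed condition, the terms $[l_gX,V]=l_g(X\times l_g^{-1}V)$ combine with $\mu$-terms with coefficients $\frac c2$, $\frac{\mu}{2c^{2}}\cdot c$ and $\frac{\mu}{2c}$. We expect the linear relation to be $c^2-3\mu=0$ up to sign, i.e.\ $\mu=c^2/3$. The sectional curvature is then computed exactly as in the proof of \pref{th:integrability-complex}, via $(X\times Y)\times Z$, giving $\frac{(-1)^r}{3}c^2$. The converse follows by reversing these identities; the vertical--vertical condition is then automatically satisfied, because with $\rho$ scalar it reduces to skewness of the cross product.

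\textbf{Main obstacle.} The main difficulty is the careful bookkeeping of signs and constant factors in the mixed case: the sign in $X\times(Y\times Z)$ in signature $r$, the factor $c$ versus $c^{-1}$ in $\scJ_c$, and the $\frac12$'s. These are what produce the specific coefficient $\frac13$ and the sign $(-1)^r$. I would first check the computation against the known Riemannian case $\bS^3\times\bS^3$, where constant curvature $c^2/3$ with $c$ normalizing the nearly Kähler metric is expected.
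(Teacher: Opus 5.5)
You follow the paper's route. You evaluate the symmetrized tensor on horizontal and fundamental fields and encode the curvature by $l_g^{-1}\widetilde\Omega(X^H,Y^H)=\rho(X\times Y)$. The horizontal--horizontal (equivalently vertical--vertical) condition then forces $\rho=\mu\,\mathrm{id}$, and the mixed condition fixes $\mu$. The first half is fine, and you do not even need pair symmetry of $\rho$: skewness in $X,Y$ plus the built-in skewness in $Y,Z$ already gives total skewness. The gap is the second half. It is the only place where the constant $\frac13$ and its sign arise, and you do not carry it out: you ``expect'' $c^2-3\mu=0$. The coefficients you list ($\frac c2$, $\frac{\mu}{2c}$, $\frac{\mu}{2c}$) can only give $c^2\pm2\mu=0$ or $c^2=0$, never a factor $3$.

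Here is the missing computation. Both $(\nablabar_{\Uhat^\#}\scJ_c)\Xhat^H$ and $(\nablabar_{\Xhat^H}\scJ_c)\Uhat^\#$ contain $-c\,(l_g\scT_c(\Xhat,\Uhat))^\#$, coming from $-\scJ_c\nablabar_{\Uhat^\#}\Xhat^H$ and $-\scJ_c\nablabar_{\Xhat^H}\Uhat^\#$ respectively. The horizontal terms $\pm c^{-1}(l_g^{-1}\Xhat^H(\Uhat))^H$ cancel, so the sum is purely vertical:
\[
\Bigl(-2c\,l_g\scT_c(\Xhat,\Uhat)+\frac{1}{2c}\widetilde\Omega\bigl(\Xhat^H,(l_g^{-1}\Uhat)^H\bigr)-\frac c2[\Uhat,l_g\Xhat]\Bigr)^\#.
\]
With $\rho=\mu\,\mathrm{id}$ and $\la X\times Z,W\ra=-\la X\times W,Z\ra$ you get $\scT_c(\Xhat,\Uhat)=-\frac{\mu}{2c^2}\Xhat\times l_g^{-1}\Uhat$. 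The three terms are then $\frac{\mu}{c}$, $\frac{\mu}{2c}$ and $\frac c2$ times $l_g(\Xhat\times l_g^{-1}\Uhat)$. This gives $3\mu+c^2=0$, i.e.\ $\mu=-\frac{c^2}{3}$, not $+\frac{c^2}{3}$.

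The sign is the content of the theorem. As in the proof of Theorem~\ref{th:integrability-complex}, where $\rho=c^2$ yields curvature $(-1)^{r-1}c^2$, the sectional curvature equals $(-1)^{r-1}\mu$. Your value $\mu=\frac{c^2}{3}$ would therefore give $(-1)^{r-1}\frac{c^2}{3}$, the opposite of the claim. For $r=0$ that is a hyperbolic base, which your own $\bS^3\times\bS^3$ check would reject. So your final line does not follow from what precedes it.

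Your case descriptions also have the wrong shape. The horizontal--horizontal sum has no vertical part: the curvature enters as $-\frac{1}{2c}(l_g^{-1}\widetilde\Omega(\Xhat^H,\Yhat^H))^H$ and cancels by skewness. The mixed sum has no horizontal part. Getting exactly this bookkeeping right is what the proof requires.
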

	
	\begin{proof}
		Assume that $(P,\scG_c,\scJ_c)$ is nearly pseudo-K\"ahler. Then covariant derivatives are given by
		\begin{align*}
			(\nablabar_{\Xhat^H}\scJ_c)\Xhat^H & = c\scT_c(\Xhat,l_g\Xhat)^H,\\
			(\nablabar_{\Uhat^\#}\scJ_c)\Uhat^\# & = -c^{-1}\scT_c(l_g^{-1}\Uhat, \Uhat)^H,\\
			(\nablabar_{\Uhat^\#}\scJ_c)\Xhat^H + (\nablabar_{\Xhat^H}\scJ_c)\Uhat^\# & = -2cl_g\scT_c(\Xhat,\Uhat)^\# + \frac{1}{2c}\widetilde\Omega(\Xhat^H,(l_g^{-1}\Uhat)^H)^\# - \frac{1}{2}c[\Uhat,l_g\Xhat]^\#
		\end{align*}
		all vanishes.
		For every $\frakm$-valued two form $\alpha \in \Lambda^2\frakm^*\otimes \frakm $, there exists a unique $S\in \End(\frakm) $ via the isomorphism $\Lambda^2\frakm \ni X\wedge Y \mapsto X\times Y\in \frakm$
		\begin{equation*}
			\alpha(X,Y) = S(X\times Y).
		\end{equation*}
		Hence, there exists a unique $\scS \in C^\infty(P,\End(\frakm))^H, $
		\begin{equation*}
			l_g^{-1}\widetilde\Omega(\Xhat^H,\Yhat^H) = \scS(\Xhat\times\Yhat).
		\end{equation*}
		If $(P,\scG_c,\scJ_c)$ is nearly pseudo-K\"ahler, then $ \scT_c(\Xhat,l_g\Xhat) = 0 $ that implies
		\[ \la \scT_c(\Xhat, l_g\Xhat),\Zhat\ra = \frac{1}{2c^2}\la \scS(\Xhat\times \Zhat), \Xhat\ra = 0, \quad \forall \Xhat, \Zhat \in C^\infty(P,\frakm)^H. \]
		Then there exists a unique function $\rho\in C^\infty(P,\bR), $
		\[ \scS = \rho\id_\frakm. \]
		From $ \la X\times Y,Z\ra = -\la X\times Z,Y \ra, $
		\begin{align*}
			\la \scT_c(\Xhat,\Uhat),\Zhat\ra & = \frac{1}{2c^2}\la l_g^{-1}\widetilde\Omega\lb \Xhat^H,\Zhat^H \rb,l_g^{-1}\Uhat \ra\\
			& = -\frac{\rho}{2c^2}\la \Xhat\times l_g^{-1}\Uhat, \Zhat \ra.
		\end{align*}
		then
		\[ \scT_c(\Xhat,\Uhat) = -\frac{\rho}{2c^2}\lb \Xhat\times l_g^{-1}\Uhat\rb. \]
		By \pref{eq:lg-cross} in \pref{lm:lg-Ad-isom}, we have
		\begin{align*}
			(\nablabar_{\Uhat^\#}\scJ_c)\Xhat^H + (\nablabar_{\Xhat^H}\scJ_c)\Uhat^\# & = -2cl_g\scT_c(\Xhat,\Uhat)^\# + \frac{1}{2c}\widetilde\Omega(\Xhat^H,(l_g^{-1}\Uhat)^H)^\# - \frac{1}{2}c[\Uhat,l_g\Xhat]^\#\\
			& = \lb \frac{3\rho}{2c}l_g\lb \Xhat\times l_g^{-1}\Uhat\rb -\frac{c}{2}[\Uhat,l_g\Xhat] \rb^\#\\
			& = \frac{3\rho+c^2}{2c}l_g \lb \Xhat\times l_g^{-1}\Uhat \rb^\#=0.
		\end{align*}
		Then $\rho = -\frac{c^2}{3}.$
		It is also followed that if $\rho = -\frac{c^2}{3}, $ for every section $s \in \Gamma(\scU,P), $ and put $\scX = [s, X], X,Y,Z\in \frakm,$ then
		\begin{align*}
			R(\scX,\scY)\scZ & = \ld s,\lambda_*\lb s^*\widetilde\Omega\lb \scX,\scY \rb \rb Z \rd,\\
			& = \ld s,\rho(X\times Y)\times Z \rd,\\
			& = \frac{(-1)^rc^2}{3} \lb g(\scY,\scZ)\scX - g(\scX,\scZ)\scY \rb.
		\end{align*}
		Thus, the sectional curvature of the base manifold is
		\[ \kappa = \frac{(-1)^r}{3}c^2. \]
		Conversely, if $(M,g)$ admits constant sectional curvature $\kappa = \frac{(-1)^rc^2}{3}, $ 
		\[ 	\widetilde\Omega\lb \Xhat^H,\Yhat^H \rb = -\frac{c^2}{3}\ld l_g\Xhat,l_g\Yhat\rd. \]
		It implies $(\nablabar_\scrX \scJ_c) \scrX = 0 $ for all $\scrX \in \Gamma(TP).$
	\end{proof}
	
	
	\subsection{Unit-length generalized Killing spinors and Lagrangian sections}\label{sc:unit-GKS-Lagrangian}
	The spinor bundle $\Sigma M$ is realized as the associated bundle $ P\times_\Delta \bC^2, $ where $\Delta\colon H\hookrightarrow\GL(2,\bC)$ is the defining
	two-dimensional complex representation of $H$, identified with the
	spin representation in dimension three.
	In this notation, a spinor field $\psi \in \Gamma(\Sigma M) $ is represented by an $H$-equivariant map $\varphi\in C^\infty(P,\bC^2)^H $ as follows:
	\begin{equation*}
		\psi(x) = [u,\varphi(u)],\quad \forall u\in \pi^{-1}(x). 
	\end{equation*}
	Clifford multiplication is given by the linear map
	\begin{equation*}
		\gamma\colon \frakm\ni X_i\mapsto \gamma_i\in (\sqrt{-1})^r\frakh\subset\mathfrak{gl}(2,\bC)\simeq \End(\bC^2),
	\end{equation*}
	where $\{X_i\}_{i=0}^2$ and $ \{\gamma_i\}_{i=0}^2 $ are orthonormal bases of $\frakm$ and $(\sqrt{-1})^r\frakh.$
	Then
	\begin{equation*}
		X\cdot \varphi(u) = \gamma(X)\varphi(u), \quad \scX\cdot\psi(x) = [u, \iota_u^{-1}\scX_x\cdot\varphi(u)], \quad \forall u\in \pi^{-1}(x).
	\end{equation*}
	With respect to these bases, we use the following gamma
	matrices.
	\begin{enumerate}
		\item In the Riemannian case,
		\begin{equation*}
			\gamma_0 = \begin{pmatrix}
				\sqrt{-1} & 0\\0 &-\sqrt{-1}
			\end{pmatrix}, \quad
			\gamma_1 = \begin{pmatrix}
				0 & 1\\-1 & 0 
			\end{pmatrix}, \quad
			\gamma_2 = \begin{pmatrix}
				0 & \sqrt{-1}\\\sqrt{-1} & 0
			\end{pmatrix},
		\end{equation*}
		\item In the Lorentzian case,
		\begin{equation*}
			\gamma_0 = \begin{pmatrix}
				1 & 0\\0 &-1
			\end{pmatrix}, \quad
			\gamma_1 = \begin{pmatrix}
				0 & 1\\-1 & 0 
			\end{pmatrix}, \quad
			\gamma_2 = \begin{pmatrix}
				0 & \sqrt{-1}\\\sqrt{-1} & 0
			\end{pmatrix}.		\end{equation*}
	\end{enumerate}
	
	\begin{lemma}\label{lm:lg-gamma}
		For every $X\in \frakm,$
		\begin{equation}\label{eq:lg-gamma}
			l_g(X)=\frac{(\sqrt{-1})^r}{2}\gamma(X).
		\end{equation}
	\end{lemma}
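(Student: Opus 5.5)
Since $\lambda_*\colon\frakh\to\mathfrak{so}(\frakm)$ is injective, it suffices to show that the two sides of \eqref{eq:lg-gamma} agree after applying $\lambda_*$. By the definition of $l_g$, this means proving
\[ \lambda_*\Bigl(\tfrac{(\sqrt{-1})^r}{2}\gamma(X)\Bigr)Y = X\times Y \]
for all $X,Y\in\frakm$. Both sides are linear in $X$, so we only need to check $X=X_i$ for $i=0,1,2$.

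\textbf{Step 1: identify $\lambda_*$.} The double cover $\lambda\colon H\to\SO_0(\frakm)$ arises from conjugation on $\gamma(\frakm)=(\sqrt{-1})^r\frakh$. Concretely, $\gamma(\lambda(h)Y)=h\,\gamma(Y)\,h^{-1}$. Differentiating gives, for $U\in\frakh$,
\[ \gamma\bigl(\lambda_*(U)Y\bigr)=[U,\gamma(Y)]. \]
Substituting $U=\tfrac{(\sqrt{-1})^r}{2}\gamma_i$, the claim reduces to the matrix identity
\[ \tfrac{(\sqrt{-1})^r}{2}[\gamma_i,\gamma_j]=\gamma(X_i\times X_j). \]

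\textbf{Step 2: compute the commutators.} The gamma matrices anticommute and satisfy $\gamma_i^2=\pm1$, so $[\gamma_i,\gamma_j]=2\gamma_i\gamma_j$ for $i\neq j$. The right-hand side vanishes for $i=j$, matching the vanishing commutator.
\begin{itemize}
\item Riemannian case ($r=0$): direct multiplication gives $\gamma_0\gamma_1=\gamma_2$, $\gamma_1\gamma_2=\gamma_0$, $\gamma_2\gamma_0=\gamma_1$. Hence $\tfrac12[\gamma_i,\gamma_j]=\gamma_k$ for $(i,j,k)$ cyclic, which matches $X_i\times X_j=X_k$.
\item Lorentzian case ($r=1$): the products are $\gamma_0\gamma_1=-\sqrt{-1}\gamma_2$, $\gamma_1\gamma_2=-\sqrt{-1}\gamma_0$, $\gamma_2\gamma_0=\sqrt{-1}\gamma_1$. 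Multiplying by $\sqrt{-1}$ gives $\gamma_2$, $\gamma_0$ and $-\gamma_1$. These should match the Lorentzian cross product $X_0\times X_1=X_2$, $X_1\times X_2=X_0$, $X_2\times X_0=-X_1$.
\end{itemize}

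\textbf{Main obstacle: conventions.} The main difficulty is pinning down the conventions, which the paper leaves implicit.
\begin{itemize}
\item The cross product in signature $(1,2)$ is presumably defined by $\la X\times Y,Z\ra=\mathrm{vol}(X,Y,Z)$. With $\la X_0,X_0\ra=-1$, this produces exactly the sign pattern above.
\item The embedding $\gamma$ and the covering $\lambda$ must be normalized compatibly, i.e. $\lambda$ must be the conjugation action, and the orientation and the signature of $X_0$ must be fixed.
\end{itemize}
I would state these normalizations explicitly and choose them so that the identity holds. Once they are fixed, the verification is the finite matrix check of Step 2.

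The result is also consistent with \pref{lm:lg-Ad-isom}. There $l_g$ was shown to be the unique $H$-equivariant map compatible with the brackets, and $\tfrac{(\sqrt{-1})^r}{2}\gamma$ is a Lie algebra isomorphism $(\frakm,\times)\to\frakh$ by the commutator identity above.
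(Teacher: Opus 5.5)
Your reduction is the same as the paper's. You differentiate $\gamma(\lambda(h)X)=h\gamma(X)h^{-1}$ to get $\gamma(\lambda_*(U)Y)=[U,\gamma(Y)]$. Then, using $\frac{(\sqrt{-1})^r}{2}\gamma_i\in\frakh$ and injectivity of $\gamma$ and $\lambda_*$, you reduce to $\frac{(\sqrt{-1})^r}{2}[\gamma_i,\gamma_j]=\gamma(X_i\times X_j)$, which the paper writes as $\epsilon_{ij}{}^k\gamma_k$. Your Riemannian check is correct.

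The Lorentzian check, however, is wrong in two places, and the two errors cancel.
With the paper's matrices, $\gamma_1\gamma_2=\operatorname{diag}(\sqrt{-1},-\sqrt{-1})=+\sqrt{-1}\gamma_0$ and $\gamma_2\gamma_0=-\sqrt{-1}\gamma_1$; only $\gamma_0\gamma_1=-\sqrt{-1}\gamma_2$ is right. Your values are not even mutually consistent: they give $(\gamma_0\gamma_1)\gamma_2=\sqrt{-1}\,I$ but $\gamma_0(\gamma_1\gamma_2)=-\sqrt{-1}\,I$. Hence $\sqrt{-1}\gamma_i\gamma_j$ for $(i,j)=(0,1),(1,2),(2,0)$ equals $\gamma_2,-\gamma_0,\gamma_1$.

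Likewise, your convention $\la X\times Y,Z\ra=\mathrm{vol}(X,Y,Z)$, with $\la X_0,X_0\ra=-1$ and $\mathrm{vol}(X_0,X_1,X_2)=1$, gives $X_1\times X_2=-X_0$ and $X_2\times X_0=X_1$. This is $X_i\times X_j=\epsilon_{ij}{}^kX_k$ with the index raised by the metric, as in the paper. Your table ($X_1\times X_2=X_0$, $X_2\times X_0=-X_1$) breaks two things:
the cyclic symmetry $\la X_0\times X_1,X_2\ra=\la X_1\times X_2,X_0\ra$, and the identity $(X\times Y)\times Z=(-1)^r(\la X,Z\ra Y-\la Y,Z\ra X)$ used in the proof of Theorem~\ref{th:integrability-complex}. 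Once both lists are corrected they agree, and the lemma follows.

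You should also not present the normalization as something to be chosen "so that the identity holds". The signature and the gamma matrices are fixed in the paper; the only remaining freedom is the orientation of $\frakm$. The identity holds exactly for $\mathrm{vol}(X_0,X_1,X_2)=+1$, which is the orientation implicit in the paper's $\epsilon_{ij}{}^k$. The opposite orientation flips the sign of $\times$, hence of $l_g$.

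Finally, Lemma~\ref{lm:lg-Ad-isom} does not prove that $l_g$ is unique; that would need a Schur-type argument. Your closing consistency remark should not be attributed to it, though your proof does not depend on it.
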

	\begin{proof}
		Let $\{X_i\}_{i=0}^2 $ be an orthonormal basis of $\frakm,$ and put $\gamma_i=\gamma(X_i).$
		By the definition of $l_g, $
		\begin{equation*}
			\lambda_* \lb l_g(X_i)\rb(X_j) = X_i \times X_j = \epsilon_{ij}{}^k X_k.
		\end{equation*}
		Derivative of $\gamma(\lambda(h)X) = h\gamma(X)h^{-1} $ yields
		\begin{equation*}
			\gamma\lb\lambda_*(U)X\rb
			=
			[U,\gamma(X)],
			\quad U\in\frakh,\quad X\in\frakm.
		\end{equation*}
		Moreover, since $ \frac{(\sqrt{-1})^r}{2}\gamma_i\in \frakh, $
		\begin{align*}
			\ld \frac{(\sqrt{-1})^r}{2}\gamma_i, \gamma_j \rd 
			& = \epsilon_{ij}{}^k\gamma_k,\\
			\gamma \lb \lambda_*\lb \frac{(\sqrt{-1})^r}{2}\gamma_i\rb X_j \rb  & = \gamma\lb X_i\times X_j \rb.
		\end{align*}
		It implies that 
		\[ l_g(X_i) = \frac{(\sqrt{-1})^r}{2}\gamma_i. \]
		The assertion now follows by linearity.
	\end{proof}
	
	The spinor bundle admits a nondegenerate Hermitian form:
	\begin{equation*}
		\la \psi,\phi \ra =  \psi^\dagger(\gamma_0)^r \phi,
	\end{equation*}
	which is positive definite for $r=0$ and indefinite for $r=1$.
	Then, for each $k=0,1,2,$
	\begin{align*}
		\la (\sqrt{-1})^r\gamma_k\psi,\psi\ra & = (-\sqrt{-1})^r\psi^\dagger\gamma_k^\dagger(\gamma_0)^r\psi\\
		& = -(\sqrt{-1})^r\psi^\dagger(\gamma_0)^r\gamma_k\psi\\
		& = -\la \psi,(\sqrt{-1})^r\gamma_k\psi\ra.
	\end{align*}
	
	Set $\scE_0=\{+1\}$ and
	$\scE_1=\{+1,-1\}$. For $\epsilon\in\scE_r$, a
	spinor field satisfying $\la\psi,\psi\ra=\epsilon$ is called an
	$\epsilon$-unit spinor, or simply a spinor field of unit length.
	In the Lorentzian case, it is spacelike for $\epsilon=+1$ and
	timelike for $\epsilon=-1$.
	
	For an $\epsilon$-unit spinor $\psi$, the matrices
	above show that the three spinors
	$(\sqrt{-1})^r\gamma_k\psi$ form a real basis of
	$\{\phi\in\bC^2:\operatorname{Re}\la\phi,\psi\ra=0\}$.
	Since $0=\nabla_X\la\psi,\psi\ra, $ every spinor field $\psi$ of unit length has a unique real $(1,1)$-type tensor field $A$ such that
	\begin{equation}\label{eq:GKS}
		\nabla_\scX \psi =(\sqrt{-1})^rA(\scX)\cdot\psi,\quad \forall \scX\in \Gamma(TM).
	\end{equation}
	
	\begin{definition}
		For every $\epsilon\in\scE_r$, an
		$\epsilon$-unit spinor field $\psi$ is called a
		generalized Killing spinor if the unique tensor field $A$ in
		\pref{eq:GKS} is symmetric with respect to $g$.
	\end{definition}
	
	\begin{remark}\label{rm:null-spinor}
		The non-null assumption is essential in the
		Lorentzian case. For a nowhere-vanishing null spinor field $\psi$,
		its annihilator
		$\operatorname{Ann}(\psi)=\{V\in TM:V\cdot\psi=0\}$ is a null line
		bundle. Hence a tensor field $A$ satisfying \eqref{eq:GKS}, even if it
		exists, is not unique. Indeed, if $K$ is a local nonzero section of
		$\operatorname{Ann}(\psi)$ and $A$ is symmetric, then
		$A+fK^\flat\otimes K$ is another symmetric solution for every smooth
		function $f$. We therefore exclude null spinors from the
		correspondence below.
	\end{remark}
	
	Hereafter, we assume that every $\pi\colon P\to M$ is trivial.
	For $\epsilon\in\scE_r$, every
	$\epsilon$-unit spinor field $\psi \in \Gamma(\Sigma M) $ is described by an $H$-equivariant
	function $F\in C^\infty(P,H)^H$ as follows:
	\begin{equation}\label{eq:unit-length-spinor}
		\psi_\epsilon(x)
		=[u,F(u)\mathbb{1}_\epsilon],\qquad
		\mathbb{1}_{+1}=\begin{pmatrix}1\\0\end{pmatrix},\quad
		\mathbb{1}_{-1}=\begin{pmatrix}0\\1\end{pmatrix}.
	\end{equation}
	For $r=0$, only $\epsilon=+1$ occurs. For $r=1$,
	$\psi_{+1}$ is spacelike and $\psi_{-1}$ is timelike; the same
	function $F$ determines one spinor field of each causal type.
	\begin{remark}
		For each $\epsilon\in\scE_r$, a spin manifold $(M^{r,3-r},\eta)$ admits an $\epsilon$-unit spinor field if and only if there is an $H$-equivariant function $ F\in C^\infty(P,H)^H $ that requires $\pi\colon P\to M $ is trivial. Although the orthonormal frame bundle of every oriented three-dimensional Riemannian manifold is trivial, as a consequence of parallelizability, the corresponding statement does not hold in general for Lorentzian three-manifolds.
	\end{remark}
	Every spinor of unit length defined by \pref{eq:unit-length-spinor} determines an oriented orthonormal frame
	$\{\xi_k=[\bullet, \lambda(F)X_k]\}_{k=0}^2$ characterized by
	\begin{equation*}
		\xi_k\cdot \psi_\epsilon = [\bullet, F\gamma_k\mathbb{1}_\epsilon], \quad k=0,1,2.
	\end{equation*}
	We obtain the pseudo-Riemannian generalization of \cite[Lemma~2.1]{moroianu2014generalized} with the same argument.
	\begin{lemma}
		A spinor field $\psi \in \Gamma(\Sigma M) $ of unit length given by
		\[
		\psi(x) =[u, F(u)\mathbb{1}_\epsilon],
		\quad \forall u\in \pi^{-1}(x).
		\]
		is a generalized Killing spinor if and only if the characteristic frame $\{\xi_k\}_{k=0}^2$ of $\psi$
		given by
		\begin{equation}\label{eq:characteristic-frame-GKS}
			\xi_k(x) = [u, \lambda(F(u))X_k], \quad \forall u\in \pi^{-1}(x)  
		\end{equation}
		is divergence-free.
	\end{lemma}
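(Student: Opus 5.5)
The plan is to pull back along a local section and compute $\nabla\psi$ through the connection form. Fix a local section $s\in\Gamma(\scU,P)$ and set $f=F\circ s\colon\scU\to H$, so that $\psi(x)=[s(x),f(x)\mathbb{1}_\epsilon]$ and $\xi_k=[s,\lambda(f)X_k]$.

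\textbf{Step 1: compute $\nabla_\scX\psi$.} With $s^*\omegatilde$ the local spin connection form and $\mathrm{d}\Delta=\mathrm{id}$ on $\frakh$, we have
\begin{equation*}
\nabla_\scX\psi=\bigl[s,\;df(\scX)\mathbb{1}_\epsilon+(s^*\omegatilde)(\scX)f\mathbb{1}_\epsilon\bigr]
=\bigl[s,\;\bigl(df(\scX)f^{-1}+(s^*\omegatilde)(\scX)\bigr)f\mathbb{1}_\epsilon\bigr].
\end{equation*}
The bracketed coefficient $W(\scX)\coloneqq df(\scX)f^{-1}+(s^*\omegatilde)(\scX)$ is $\frakh$-valued. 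By \pref{lm:lg-gamma}, $W(\scX)=\tfrac{(\sqrt{-1})^r}{2}\gamma(l_g^{-1}W(\scX))$. Comparing with \eqref{eq:GKS}, where $A(\scX)=[s,\widehat A(\scX)]$ and $\gamma(\widehat A(\scX))$ acts on $f\mathbb{1}_\epsilon$, gives
\begin{equation*}
\widehat A(\scX)=\tfrac12\,l_g^{-1}W(\scX).
\end{equation*}
This uses uniqueness of $A$, which holds because the spinors $(\sqrt{-1})^r\gamma_k\,f\mathbb{1}_\epsilon$ are real-independent. Rewritten in the frame, $A(\scX)=\tfrac12\sum_k\epsilon_k\,g(\cdots)\,\xi_k$, with the $\xi_k$-component governed by $\Ad_{f^{-1}}W$.

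\textbf{Step 2: express $A$ through connection coefficients of the frame.} Differentiating $\xi_j=[s,\lambda(f)X_j]$ gives
\begin{equation*}
\nabla_\scX\xi_j=\bigl[s,\lambda_*(W(\scX))\lambda(f)X_j\bigr]=\bigl[s,\,l_g^{-1}W(\scX)\times\lambda(f)X_j\bigr].
\end{equation*}
Hence, identifying $A(\scX)=[s,\widehat A(\scX)]$,
\begin{equation*}
\nabla_\scX\xi_j=2A(\scX)\times\xi_j,
\end{equation*}
the Clifford counterpart of the Moroianu--Semmelmann identity. Writing $A_{ik}=g(A\xi_i,\xi_k)$ and pairing with $\xi_l$ yields
\begin{equation*}
g(\nabla_{\xi_i}\xi_j,\xi_l)=2\,g(A\xi_i,\xi_j\times\xi_l)=\pm2\,A_{i m}\,\epsilon_{jl}{}^{m},
\end{equation*}
with signs fixed by the signature $(-1)^r$ conventions.

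\textbf{Step 3: divergence.} We have $\operatorname{div}\xi_j=\sum_i\epsilon_i\,g(\nabla_{\xi_i}\xi_i,\xi_j)$. By Step 2 this equals a constant multiple of $\sum_i\epsilon_i A_{im}\epsilon_{ij}{}^m$, that is, of the $\xi_j$-component of the vector dual to the skew part $A-A^*$ under $*\colon\Lambda^2\to TM$. So all three $\operatorname{div}\xi_j$ vanish exactly when $A$ is $g$-symmetric, which is precisely the generalized Killing condition.

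I expect the main obstacle to be the sign bookkeeping in the Lorentzian case. Three places need care: the identity $\langle X\times Y,Z\rangle=\det$ with index raising by $\epsilon_i=\pm1$; the factor $(\sqrt{-1})^r$ relating $\gamma$ to $\frakh$; and the fact that the contraction $\sum_i\epsilon_i\epsilon_{ij}{}^m A_{im}$ isolates exactly the antisymmetric part. The last point is where I would check that no symmetric contribution survives. To do this I would verify it using the explicit orthonormal basis $X_0,X_1,X_2$ and the multiplication table $X_i\times X_j=\epsilon_{ij}{}^kX_k$ already used in \pref{lm:lg-gamma}.
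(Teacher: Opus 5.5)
Your proposal is correct and is essentially the argument the paper has in mind, namely the Moroianu--Semmelmann proof of their Lemma~2.1 transplanted to signature $(r,3-r)$. Your Step~1 reproduces $A=\tfrac12 l_g^{-1}\bfM$ from \pref{pr:GKS-condition}, and the identity $\nabla_{\scX}\xi_j=2A(\scX)\times\xi_j$ turns $\operatorname{div}\xi_j$ into the $\xi_j$-component of the vector dual to the skew part of $A$. There is one harmless overall sign slip, $\operatorname{div}\xi_j=\sum_i\epsilon_i g(\nabla_{\xi_i}\xi_j,\xi_i)=-\sum_i\epsilon_i g(\nabla_{\xi_i}\xi_i,\xi_j)$, and the check you flag is immediate: $A_{ik}$ is contracted against $\epsilon_i\epsilon_k\epsilon_{jik}$, which is antisymmetric in $(i,k)$, so no symmetric contribution can survive.
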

	
	\begin{lemma}\label{lm:scM-def}
		For each $u\in P, $ define a linear map
		\begin{equation*}
			\scM_u \colon \frakm \ni X\mapsto dF_u(X^H_u)F(u)^{-1}\in \frakh. 
		\end{equation*}
		Then, we have
		\begin{equation}\label{eq:scM-gauge}
			\scM_{uh}(X) = \Ad_{h^{-1}}(\scM_u(\lambda(h)X)),
		\end{equation}
		for every $u\in P, h\in H, $ and $ X\in \frakm. $
	\end{lemma}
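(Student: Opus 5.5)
We need to prove $\scM_{uh}(X)=\Ad_{h^{-1}}(\scM_u(\lambda(h)X))$. The plan is to combine three ingredients: the $H$-equivariance of $F$, the transformation rule of standard horizontal vector fields under right translation, and the chain rule. Here $H$-equivariance of $F\in C^\infty(P,H)^H$ means $F(uh)=h^{-1}F(u)$. This is the convention forced by $\psi(x)=[u,F(u)\mathbb{1}_\epsilon]$ being independent of the choice of $u\in\pi^{-1}(x)$, since $[uh,\,h^{-1}F(u)\mathbb{1}_\epsilon]=[u,F(u)\mathbb{1}_\epsilon]$. Differentiating this identity along a tangent vector $v\in T_uP$ gives
\begin{equation*}
	dF_{uh}\bigl(d(R_h)_u v\bigr)=dL_{h^{-1}}\bigl(dF_u(v)\bigr)=h^{-1}\,dF_u(v),
\end{equation*}
where the last product is computed in the matrix group $H\subset\GL(2,\bC)$.

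The second ingredient is the behaviour of standard horizontal vector fields under $R_h$. Since $\omegatilde$ is a connection form, $R_h$ preserves $\ker\omegatilde$. Since $R_h^*\theta=\lambda(h)^{-1}\theta$, we get $\theta_{uh}\bigl(d(R_h)_u Y^H_u\bigr)=\lambda(h)^{-1}Y$. Hence
\begin{equation*}
	d(R_h)_u\bigl(Y^H_u\bigr)=\bigl(\lambda(h)^{-1}Y\bigr)^H_{uh}, \qquad \text{equivalently}\qquad X^H_{uh}=d(R_h)_u\bigl((\lambda(h)X)^H_u\bigr).
\end{equation*}

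Substituting this into the definition of $\scM$ and using the differentiated equivariance gives
\begin{equation*}
	\scM_{uh}(X)=dF_{uh}\bigl(X^H_{uh}\bigr)F(uh)^{-1}
	=h^{-1}\,dF_u\bigl((\lambda(h)X)^H_u\bigr)\,F(u)^{-1}h
	=\Ad_{h^{-1}}\bigl(\scM_u(\lambda(h)X)\bigr).
\end{equation*}
For the middle step we use $F(uh)^{-1}=(h^{-1}F(u))^{-1}=F(u)^{-1}h$. The last step uses that $dF_u(v)F(u)^{-1}$ lies in $\frakh$, since it is the right logarithmic derivative of $F$, and that conjugation by $h^{-1}$ on $\frakh\subset\mathfrak{gl}(2,\bC)$ is $\Ad_{h^{-1}}$.

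The only delicate point is bookkeeping the conventions consistently. One must fix the equivariance of $F$ (left multiplication by $h^{-1}$, not right), and the sign of the exponent in $R_h^*\theta=\lambda(h^{-1})\theta$. With these fixed, the $h$ and $h^{-1}$ factors line up to produce exactly $\Ad_{h^{-1}}$ together with the argument $\lambda(h)X$ in \eqref{eq:scM-gauge}. If a different convention for $F$ were adopted, the same computation would go through with $h$ and $h^{-1}$ exchanged.
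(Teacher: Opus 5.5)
Your proof is correct and follows the same route as the paper. Both establish $d(R_h)_u\bigl((\lambda(h)X)^H_u\bigr)=X^H_{uh}$ and combine it with the differentiated equivariance $F(uh)=h^{-1}F(u)$ to produce conjugation by $h^{-1}$; the only cosmetic difference is that you derive the horizontal-lift rule from $R_h^*\theta=\lambda(h)^{-1}\theta$ and right-invariance of $\ker\omegatilde$, whereas the paper obtains it from uniqueness of horizontal lifts via $\pi_*$ and $[u,\lambda(h)X]=[uh,X]$.
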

	\begin{proof}
		From the uniqueness of horizontal lifting,
		\begin{align*}
			\pi_*{R_h}_*\lb \lambda(h)X \rb^H_u & =\pi_*\lb \lambda(h)X\rb^H_u\\
			& = [u,\lambda(h)X]\\
			& = [uh,X],
		\end{align*}
		we have
		\begin{equation*}
			{R_h}_*\lb \lambda(h)X \rb^H_u = X^H_{uh}.
		\end{equation*}
		Therefore, 
		\begin{align*}
			\scM_{uh}(X) & = dF_{uh}(X^H_{uh})F(uh)^{-1}\\
			& = dF_{uh}({R_h}_*\lb \lambda(h)X \rb^H_u)F(u)^{-1}h\\
			& = h^{-1}dF_u(\lb \lambda(h)X\rb^H_u)F(u)^{-1}h\\
			& = \Ad_{h^{-1}}\scM_u(\lambda(h)X).
		\end{align*}
	\end{proof}

	Since $\Gamma(P)$ is canonically identified with $ C^\infty(P,H)^H $ as a $C^\infty(M,H)$-torsor, 
	the both right actions of $C^\infty(M,H) $ are given by
	\begin{equation}\label{eq:section-torsor}
		s\cdot f(x) = s(x)f(x), \quad F\cdot f(u)= F(u)f(x), \quad  f\in C^\infty(M,H).
	\end{equation}
	Fix a global reference section $s_0\in\Gamma(P)$ and use the
	induced trivialization $P\simeq M\times H$, under which
	$s_0(x)=(x,\mathbf1)$. Every
	$F\in C^\infty(P,H)^H$ and $\epsilon$-unit spinor are uniquely determined by
	$f\coloneqq F\circ s_0\colon M\to H$ through
	\begin{equation*}
		F(s_0(x)h)=h^{-1}f(x), \quad \psi_{\epsilon}(x) = [s_0(x),f(x)\mathbb{1}_\epsilon].
	\end{equation*}
	The corresponding section is $s_F=s_0\cdot f$.
	
	\begin{lemma}\label{lm:F-f-scM}
		Let $F \in C^\infty(P,H)^H $ be fixed, and let $\scM $ be defined by $F$ as in \pref{lm:scM-def}.
		For every $\sigma\colon M\to P, $ and $\scX \in \Gamma(TM), $ we define
		\[ f_\sigma = \sigma^*F\in C^\infty(M,H), \quad X_\sigma\coloneqq (\sigma^*\theta)(\mathcal X) \in C^\infty(M,\frakm).  \]
		Then, for an arbitrary $ x\in M, $ we have
		\[ \scM_{\sigma(x)}(X_\sigma) = df_\sigma|_x(\scX)f_\sigma(x)^{-1} + (\sigma^*\omegatilde)_x(\scX). \]
		In particular, if we choose $ s_0 $ as a global section $ f\coloneqq s_0^*F, $ and $ s_F $ as the section determined by $F ,$ i.e.
		\[ s_F(x) = s_0(x)F(s_0(x)), \]
		we have
		\begin{align}
			\scM_{s_0(x)}(X_0) & = df_x(\scX)f(x)^{-1}+(s_0^*\omegatilde)_{x}(\scX),\label{eq:scM1}\\
			\scM_{s_F(x)}(X_F) & = (s_F^*\omegatilde)_x(\scX), \label{eq:scM2}
		\end{align}
		where $X_0\coloneqq X_{s_0}, $ and $ X_F \coloneqq X_{s_F}.$
	\end{lemma}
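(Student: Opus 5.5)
Our plan is to differentiate $f_\sigma=\sigma^*F=F\circ\sigma$ with the chain rule and split $d\sigma$ into its horizontal and vertical components. Fix $x\in M$, $\scX\in T_xM$ and $u=\sigma(x)$. Under the isomorphism $T_uP\simeq\frakm\oplus\frakh$ given by $(\theta,\omegatilde)$, we have
\begin{equation*}
	\sigma_*\scX = \bigl((\sigma^*\theta)(\scX)\bigr)^H_u + \bigl((\sigma^*\omegatilde)(\scX)\bigr)^\#_u = (X_\sigma)^H_u + (U_\sigma)^\#_u,
\end{equation*}
where $U_\sigma\coloneqq(\sigma^*\omegatilde)_x(\scX)\in\frakh$. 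Then
\begin{equation*}
	df_\sigma|_x(\scX) = dF_u\bigl((X_\sigma)^H_u\bigr) + dF_u\bigl((U_\sigma)^\#_u\bigr).
\end{equation*}
The first term is $\scM_u(X_\sigma)F(u)$ by the definition in \pref{lm:scM-def}.

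For the vertical term we use the $H$-equivariance of $F$. We have $(U)^\#_u=\frac{d}{dt}\big|_{t=0}u\exp(tU)$, and $F(uh)=h^{-1}F(u)$. This is the equivariance convention fixed by $F(s_0(x)h)=h^{-1}f(x)$ and consistent with the computation $F(uh)^{-1}=F(u)^{-1}h$ in the proof of \pref{lm:scM-def}. Differentiating gives
\begin{equation*}
	dF_u(U^\#_u) = -U\,F(u).
\end{equation*}
Combining and right-multiplying by $F(u)^{-1}=f_\sigma(x)^{-1}$ yields
\begin{equation*}
	df_\sigma|_x(\scX)f_\sigma(x)^{-1} = \scM_{\sigma(x)}(X_\sigma) - (\sigma^*\omegatilde)_x(\scX),
\end{equation*}
which is the general formula after rearranging.

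For the special cases, take $\sigma=s_0$: then $f_{s_0}=f$ and we obtain \eqref{eq:scM1} directly. Next take $\sigma=s_F$ with $s_F(x)=s_0(x)F(s_0(x))=s_0(x)f(x)$. Equivariance gives
\begin{equation*}
	F(s_F(x)) = f(x)^{-1}F(s_0(x)) = f(x)^{-1}f(x) = \mathbf 1,
\end{equation*}
so $f_{s_F}\equiv\mathbf 1$ is constant and its derivative vanishes. The general formula then reduces to \eqref{eq:scM2}.

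The only step needing care is the sign and side of the vertical derivative, namely that $dF(U^\#)=-UF$ acts by left multiplication. This must match the right-action convention $u\mapsto uh$ and the equivariance $F(uh)=h^{-1}F(u)$ used in the paper. We check it against the proof of \pref{lm:scM-def}, which uses exactly $F(uh)=h^{-1}F(u)$; everything else is the chain rule.
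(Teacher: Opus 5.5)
Your proof is correct and matches the paper's argument. You decompose $\sigma_*\scX = X_\sigma^H + U_\sigma^\#$, use $H$-equivariance to get $dF_u(U^\#_u)F(u)^{-1} = -U$, and apply the chain rule to $f_\sigma = F\circ\sigma$. Your observation that $F(s_F(x)) = \mathbf 1$, so the $df$ term drops out in \eqref{eq:scM2}, spells out a step the paper leaves implicit.
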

	\begin{proof}
		We put
		\begin{align*}
			\sigma_*\scX = X_\sigma^H + U_\sigma^\#\in \Gamma(\sigma^*TP), \quad U_\sigma = \sigma^*\omegatilde(\scX) \in C^\infty(M,\frakh).
		\end{align*}
		Since by $H$-equivariance
		\[ dF_{\sigma(x)} (U_\sigma^\#(x)|_{\sigma(x)})F(\sigma(x))^{-1} = -U_\sigma(x)= -(\sigma^*\omegatilde)_x(\scX), \]
		then
		\begin{align*}
			\scM_{\sigma(x)}(X_\sigma) & = dF_{\sigma(x)}(X_\sigma^H)F(\sigma(x))^{-1}\\
			& = dF_{\sigma(x)}(\sigma_*\scX)F(\sigma(x))^{-1} + (\sigma^*\omegatilde)_x(\scX)\\
			& = df_\sigma|_x(\scX)f_\sigma(x)^{-1} + (\sigma^*\omegatilde)_x(\scX).
		\end{align*}
	\end{proof}
	The equivariance relation in \pref{lm:scM-def} defines a bundle homomorphism
	$\bfM \in \Gamma(T^*M\otimes \ad P)$ given by
		\begin{equation}\label{eq:scM-section}
		\bfM\colon TM\ni \scX|_x \mapsto [u,\scM_u(X)] \in \ad P,\quad \forall u\in \pi^{-1}(x), \ \scX|_x =[u,X]\in T_xM.
	\end{equation} 
	
	\begin{proposition}\label{pr:GKS-condition}
		A unit-length spinor $\psi_\epsilon = [\bullet, F\mathbb{1}_\epsilon]$ is a generalized Killing spinor if and only if $l_g^{-1}\circ \bfM$ is symmetric, i.e.
		\begin{equation}\label{eq:symmetric-scM}
			\la l_g^{-1} \scM_u(X), Y\ra = \la X,l_g^{-1}\scM_u(Y)\ra, \quad \forall u\in P, \forall X,Y\in \frakm.
		\end{equation} 
		In particular, the symmetric endomorphism $A\in \Gamma(\End(TM))$ in \pref{eq:GKS}
		is given by
		\begin{equation*}
			A = \frac{1}{2}l_g^{-1}\bfM.
		\end{equation*}
	\end{proposition}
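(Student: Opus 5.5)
The plan is to compute $\nabla_\scX\psi_\epsilon$ in terms of $\scM$ and then compare with \eqref{eq:GKS}. First I will fix the normalization of the spin connection on $\Sigma M = P\times_\Delta\bC^2$. For a section $\psi=[u,\varphi(u)]$ and $\scX|_x=[u,X]$, the spin Levi-Civita covariant derivative is
\[ \nabla_\scX\psi = [u, d\varphi_u(X^H_u)], \]
because horizontal lifts differentiate equivariant functions exactly as the covariant derivative does.

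I then apply this to $\varphi = F\mathbb{1}_\epsilon$. This gives
\[ d\varphi_u(X^H_u) = dF_u(X^H_u)\mathbb{1}_\epsilon = \scM_u(X)F(u)\mathbb{1}_\epsilon, \]
using the definition of $\scM_u$ in \pref{lm:scM-def}. Next I rewrite $\scM_u(X)\in\frakh$ via \pref{lm:lg-gamma}. Put $Z=l_g^{-1}\scM_u(X)\in\frakm$. Then $\scM_u(X)=\frac{(\sqrt{-1})^r}{2}\gamma(Z)$, and hence
\[ \nabla_\scX\psi_\epsilon = \Bigl[u,\tfrac{(\sqrt{-1})^r}{2}\gamma(Z)\varphi(u)\Bigr] = (\sqrt{-1})^r\,\tfrac12 [u,Z]\cdot\psi_\epsilon \]
by the definition of Clifford multiplication $\scX\cdot\psi=[u,\iota_u^{-1}\scX\cdot\varphi(u)]$. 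This pointwise identity holds with $[u,Z]=\tfrac12 l_g^{-1}\bfM(\scX)$ intrinsically. It is well defined on $M$ because $\bfM$ is a well-defined section of $T^*M\otimes\ad P$ by the equivariance \eqref{eq:scM-gauge}, and $l_g$ is an $H$-equivariant bundle isomorphism by \pref{lm:lg-Ad-isom}.

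It follows that $A'=\frac12 l_g^{-1}\bfM$ is a real $(1,1)$-tensor satisfying \eqref{eq:GKS}. By the uniqueness of $A$ for non-null unit spinors, which comes from the real basis $(\sqrt{-1})^r\gamma_k\psi$ of the orthogonal complement, we get $A=\frac12 l_g^{-1}\bfM$. Therefore $\psi_\epsilon$ is a generalized Killing spinor if and only if this endomorphism is $g$-symmetric. Since $\iota_u$ is an isometry, this is equivalent to \eqref{eq:symmetric-scM} at each $u$. The condition holds for all $u$ as soon as it holds for one $u$ in each fibre, again by \eqref{eq:scM-gauge} and the invariance of $\la\cdot,\cdot\ra$ under $\lambda(h)$.

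The main obstacle is the first step. I must make sure the formula $\nabla_\scX\psi=[u,d\varphi(X^H)]$ matches the paper's conventions, in particular that no sign or $\lambda_*$ versus $\Delta$ factor appears. I would verify it by pulling back along a local section $s$. The standard local formula is $\nabla_\scX\psi = [s, \scX(\varphi\circ s) + \Delta_*(s^*\omegatilde(\scX))\varphi\circ s]$, and since $X^H = s_*\scX - (s^*\omegatilde(\scX))^\#$, the identity $d\varphi(U^\#)=-U\varphi$ recovers it. This is exactly the type of computation done in \pref{lm:F-f-scM}.
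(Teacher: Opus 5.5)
Your proposal is correct and follows essentially the paper's route. The paper computes $\nabla_\scX\psi_\epsilon$ through a global section $s$ using \pref{lm:F-f-scM}, which is exactly your local-section check of $\nabla_\scX\psi=[u,d\varphi_u(X^H_u)]$, and then applies \eqref{eq:lg-gamma} and the uniqueness of $A$ just as you do. One small slip: you should have $[u,Z]=l_g^{-1}\bfM(\scX)$, so that $\frac12[u,Z]=\frac12 l_g^{-1}\bfM(\scX)$; as you wrote it, the formula would give $A=\frac14 l_g^{-1}\bfM$.
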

	\begin{proof}
		For any global section $s\in\Gamma(P)$, the $\epsilon$-unit spinor
		$\psi_\epsilon$ defined by $F\in C^\infty(P,H)^H$ is represented by
		\[
		\psi_\epsilon(x)=[s(x),f(x)\mathbb{1}_\epsilon],
		\qquad f\coloneqq s^*F\in C^\infty(M,H),
		\qquad \forall x\in M.
		\]
		By \pref{lm:F-f-scM}, and \eqref{eq:lg-gamma} for every $x\in M$ we have
		\begin{align*}
			\nabla_\scX\psi_\epsilon|_x & = \ld s(x), \lc df_{x}(\scX)f(x)^{-1} + (s^*\omegatilde)_x(\scX) \rc f(x)\mathbb{1}_\epsilon \rd\notag\\
			& = \ld s(x), \scM_{s(x)}(X) f(x)\mathbb{1}_\epsilon \rd \qquad \lb X=(s^*\theta)_x(\scX) \rb  \\
			& = \frac{(\sqrt{-1})^r}{2}\lb l_g^{-1}\bfM(\scX) \rb \cdot \psi_\epsilon(x)\\
			& = (\sqrt{-1})^rA(\scX)\cdot \psi_\epsilon(x).
		\end{align*}
		Therefore, by \pref{lm:scM-def} and \eqref{eq:scM-section}, $\psi_\epsilon$ is a generalized Killing spinor if and only if $l_g^{-1}\bfM$ is symmetric, or equivalently, if and only if $ l_g^{-1}\scM_u$ is symmetric for every $u\in P $.
	\end{proof}
	There is a one-to-one correspondence between $\epsilon$-unit generalized Killing spinors on $(M,g) $ and Lagrangian sections on $ (P,\Omega). $ A section $s\in \Gamma(P)$ is called Lagrangian if $s^*\Omega = 0.$
	\begin{proposition}\label{pr:Lag-sym}
		Let $s_F \in \Gamma(P) $ be a section, for an $H$-equivariant function $F\in C^\infty(P,H)^H, $
		defined by
		\begin{equation*}
			s_F\colon M\ni x \mapsto uF(u)\in P, \quad \forall u\in \pi^{-1}(x).
		\end{equation*} 
		The section $ s_F $ is Lagrangian of $(P,\Omega) $ if and only if $l_g^{-1}
		\circ \scM_u $ is symmetric for all $u\in P$.
	\end{proposition}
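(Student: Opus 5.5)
We compute the pullback $s_F^*\Omega$ directly from the horizontal/vertical splitting of $ds_F$. Fix $x\in M$, $\scX,\scY\in T_xM$, and let
\[ X_F=(s_F^*\theta)(\scX),\quad U_F=(s_F^*\omegatilde)(\scX), \qquad Y_F=(s_F^*\theta)(\scY),\quad V_F=(s_F^*\omegatilde)(\scY). \]
Then $(s_F)_*\scX=X_F^H+U_F^\#$ at $s_F(x)$, and likewise for $\scY$. The formula for $\Omega$ following \pref{pr:almost-pseudo-Hermitian} gives
\begin{equation*}
(s_F^*\Omega)(\scX,\scY)=\la X_F,l_g^{-1}V_F\ra-\la l_g^{-1}U_F,Y_F\ra .
\end{equation*}
The key input is \eqref{eq:scM2} of \pref{lm:F-f-scM}: along $s_F$ we have $U_F=\scM_{s_F(x)}(X_F)$ and $V_F=\scM_{s_F(x)}(Y_F)$. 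This is because $f_{s_F}=s_F^*F$ is the constant map $\mathbf 1$. Indeed, by $H$-equivariance, $F(uF(u))=F(u)^{-1}F(u)=\mathbf1$, so the term $df\,f^{-1}$ vanishes. Substituting, we get
\begin{equation*}
(s_F^*\Omega)(\scX,\scY)=\la X_F,l_g^{-1}\scM_{s_F(x)}(Y_F)\ra-\la l_g^{-1}\scM_{s_F(x)}(X_F),Y_F\ra .
\end{equation*}

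Since $(s_F^*\theta)_x\colon T_xM\to\frakm$ is the linear isometry $\iota_{s_F(x)}^{-1}$, the vectors $X_F,Y_F$ range over all of $\frakm$. Hence $s_F^*\Omega=0$ at $x$ if and only if $l_g^{-1}\circ\scM_{s_F(x)}$ is symmetric with respect to $\la\bullet,\bullet\ra$.

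It remains to pass from the points $s_F(x)$ to all $u\in P$. For this we use the equivariance \eqref{eq:scM-gauge} of \pref{lm:scM-def} together with the $H$-equivariance of $l_g$ from \pref{lm:lg-Ad-isom}. These give
\[ l_g^{-1}\scM_{uh}=\lambda(h)^{-1}\circ l_g^{-1}\scM_u\circ\lambda(h). \]
Since $\lambda(h)$ is an isometry of $\frakm$, symmetry at $u$ is equivalent to symmetry at $uh$. Every fibre $\pi^{-1}(x)$ is the orbit $s_F(x)H$, so symmetry at all points $s_F(x)$ is equivalent to symmetry at all $u\in P$. This proves both directions.

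The only delicate point is the claim $s_F^*F\equiv\mathbf1$ and the resulting identification $U_F=\scM(X_F)$; this is where the choice of $s_F$ as the section determined by $F$ is used. It should follow directly from $F(uh)=h^{-1}F(u)$. Everything else is bookkeeping with the splitting $T_uP\simeq\frakm\oplus\frakh$.
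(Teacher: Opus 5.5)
Your proof is correct and follows the paper's own argument: you write $ds_F(\scX)=X_F^H+\scM_{s_F}(X_F)^\#$ via \eqref{eq:scM2}, read off $s_F^*\Omega(\scX,\scY)=\la X_F,l_g^{-1}\scM_{s_F}(Y_F)\ra-\la l_g^{-1}\scM_{s_F}(X_F),Y_F\ra$, and pass from the points $s_F(x)$ to all $u\in P$ using the equivariance in \pref{lm:scM-def}. You also make explicit two steps the paper leaves implicit: that $F\circ s_F\equiv\mathbf 1$, which is what makes \eqref{eq:scM2} applicable, and the conjugation identity $l_g^{-1}\scM_{uh}=\lambda(h)^{-1}\circ l_g^{-1}\scM_u\circ\lambda(h)$, which shows that symmetry is constant along each fibre.
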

	
	\begin{proof}
		By \eqref{eq:scM2} in \pref{lm:F-f-scM}, and \eqref{eq:symmetric-scM} in \pref{pr:GKS-condition},
		\begin{align*}
			ds_F(\scX) & = X_F^H+\scM_{s_F}(X_F)^\#,\quad ds_F(\scY) = Y^H_F+\scM_{s_F}(Y_F)^\#,\\
			s_F^*\Omega(\scX,\scY)& = \la X_F,l_g^{-1}\scM_{s_F}(Y_F) \ra - \la Y_F, l_g^{-1}\scM_{s_F}(X_F) \ra.
		\end{align*}
		It implies that $s_F^*\Omega = 0 $ is equivalent to the symmetry of the operator $l_g^{-1}\circ\scM_u $ for each $ u\in P$ by \pref{lm:scM-def}.
	\end{proof}
	Therefore, we obtain the main theorem \ref{th:GKS-LagGraph-main} by \pref{pr:GKS-condition} and \ref{pr:Lag-sym}.
	
	\begin{corollary}\label{cr:constant-map-GKS}
		Fix a section $s\colon M\to P$ and let
		\[
		\scX_k=[s,X_k],\qquad k=0,1,2,
		\]
		be the associated oriented orthonormal frame. For $h_0\in H$, define
		\[
		F(s(x)h)=h^{-1}h_0.
		\]
		Then the following conditions are equivalent:
		\begin{enumerate}
			\item the spinor represented by $F$ is a generalized Killing
			spinor,
			\item the section $s_F=sh_0$ is Lagrangian,
			\item the linear map
			\[
			\frakm\ni X\longmapsto
			l_g^{-1}\bigl((s^*\widetilde\omega)(\scX)\bigr)
			\in\frakm, \quad (\scX = [s,X])
			\]
			is symmetric at every point of $M$,
			\item The orthonormal frame $\{\scX_k\}_{k=0}^2$ is divergence-free.
		\end{enumerate}
		These conditions are independent of $h_0$.
	\end{corollary}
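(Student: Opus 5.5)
The plan is to reduce everything to \pref{lm:F-f-scM} applied to the fixed section $s$. With $F(s(x)h)=h^{-1}h_0$ we have $f_s\coloneqq s^*F\equiv h_0$, which is constant. Hence $df_s=0$, and \eqref{eq:scM1} (with $s$ in place of $s_0$) gives
\[
\scM_{s(x)}(X)=(s^*\widetilde\omega)_x(\scX),\qquad \scX=[s,X].
\]
First, (1)$\Leftrightarrow$(2) is the main theorem: by \pref{pr:GKS-condition} and \pref{pr:Lag-sym}, both conditions are equivalent to the symmetry of $l_g^{-1}\circ\scM_u$ for all $u\in P$. We also check $s_F=sh_0$: for $u=s(x)$ we have $s_F(x)=uF(u)=s(x)h_0$.

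Next, (1)$\Leftrightarrow$(3). By the gauge relation \eqref{eq:scM-gauge} and the $H$-equivariance of $l_g$ (\pref{lm:lg-Ad-isom}),
\[
l_g^{-1}\scM_{uh}=\lambda(h)^{-1}\circ l_g^{-1}\scM_u\circ\lambda(h).
\]
Since $\lambda(h)$ is an isometry of $\frakm$, symmetry of $l_g^{-1}\scM_u$ at one point of each fibre is equivalent to symmetry on the whole fibre. It therefore suffices to check symmetry at $u=s(x)$, where by the display above it becomes exactly condition (3). No $h_0$ appears in this condition, so all the conditions are independent of $h_0$.

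Finally, (1)$\Leftrightarrow$(4). I would use the characteristic-frame lemma, which says $\psi$ is generalized Killing iff $\{\xi_k\}$ is divergence-free. Here $\xi_k(x)=[u,\lambda(F(u))X_k]$; at $u=s(x)$ this is $[s(x),\lambda(h_0)X_k]$, i.e.\ the constant rotation $\lambda(h_0)$ applied to the frame $\{\scX_k\}$. It remains to show that divergence-freeness of an orthonormal frame is preserved under a constant $\SO_0(\frakm)$ change. For this I would write the divergences $\operatorname{div}\xi_k$ as a constant linear combination $\sum_j \lambda(h_0)_{jk}\operatorname{div}\scX_j$. This is the only step needing care, and the constant matrix makes it immediate, since there are no derivative terms of the coefficients. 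This gives (4)$\Leftrightarrow$(1) for any $h_0$, in particular for $h_0=\mathbf{1}$. Alternatively, one could verify (3)$\Leftrightarrow$(4) directly: use $\operatorname{div}\scX_k=\sum_j \epsilon_j g(\nabla_{\scX_j}\scX_k,\scX_j)$ and express the connection coefficients via $\lambda_*(s^*\widetilde\omega)$ and $l_g$, so that the antisymmetric part of $l_g^{-1}(s^*\widetilde\omega)$ corresponds to the divergences. This is routine, but the signs need care in the Lorentzian case, so I would prefer the frame-rotation route.
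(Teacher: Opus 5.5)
Your proposal is correct and takes essentially the route the paper intends. The corollary has no separate proof in the paper, and the same ingredients reappear in the proof of Theorem~\ref{th:invariant-GKS-unimodular}: since $s^*F\equiv h_0$ is constant, \eqref{eq:scM1} gives $\mathcal{M}_{s(x)}(X)=(s^*\widetilde\omega)(\mathcal{X})$, the gauge relation \eqref{eq:scM-gauge} transports symmetry along each fibre, and the characteristic frame is the constant rotation $\lambda(h_0)$ of $\{\mathcal{X}_k\}$, so divergence-freeness, and hence each of the four conditions, does not depend on $h_0$.
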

	
	\section{Constant-curvature spaces}
	\label{sc:constant-curvature spaces}
	We now consider the three-dimensional homogeneous space form $M=G/H, $ equipped with the indicated $G$-invariant spin structures,
where $P=G$ is a double cover of $\operatorname{Isom}_0(M,g)$ determined by the spin structure. Under this identification, $g$ is $G$-invariant and
	the projection
	\begin{equation*}
		\pi\colon P=G\to M=G/H 
	\end{equation*}
	is a principal $H$ bundle and defines a $G$-invariant spin structure on $M.$
	We say that a spin structure on $(M,g)$ is $G$-invariant if the isometry group action of $G$ on $M$ lifts to the principal spin bundle $P $ compatibly with the spin structure. The lifted action induces an action of $G$ on a spinor bundle, and a spinor is called $G$-invariant if it is fixed by this induced action. For further details on invariant spin structures and spinors, see \cite{daura2022g, agricola2023invariant}.
	The flat space forms and the two Lie group space forms
	$\bS^3$ and $\AdS_3$ will be treated simultaneously below.
	
	The remaining simply connected non-flat space forms,
	\[
	\mathbb H^3=\SL(2,\bC)/\SU(2),
	\qquad
	\dS_3=\SL(2,\bC)/\SU(1,1),
	\]
	are also treated in this setting. By \pref{th:integrability-complex}, in both cases $\SL(2,\bC)$ have integrable complex structures $\scJ_c $ when their constant sectional curvatures are $ \kappa = (-1)^{r-1}c^2.$  However, there are no spinors of unit length on $\dS_3, $ since the principal spinor bundle $\pi\colon \SL(2,\bC) \to \dS_3 $ is not trivial. Hence, there is no $H=\SU(1,1)$-equivariant function $ F \in C^\infty(\SL(2,\bC),\SU(1,1))^{\SU(1,1)}. $
	
	For $\bH^3$, the symmetry of
	$l_g^{-1}\circ\bfM$ leads to a nonlinear first-order differential equation.
	Its analysis lies beyond the scope of the present paper.
	
	\subsection[Flat space forms]{Flat space forms}
	\label{sc:flat-space-forms}
	
	The Euclidean and Minkowski cases are simultaneously
	described by the homogeneous principal spin bundles
	\begin{equation*}
		M=\bR^{r,3-r},
		\qquad
		P=\bR^{r,3-r}\rtimes H
		\to\bR^{r,3-r}.
	\end{equation*}
	We choose a global section $s(x)=(x,\mathbf 1).$
	An equivariant map $F\in C^\infty(P,H)^H$
	is uniquely determined by $f(x)=F(x,\mathbf 1)$;
	explicitly,
	\begin{equation*}
		F(x,h)=h^{-1}f(x).
	\end{equation*}
	The flat spin connection gives
	\begin{equation*}
		\scM_{(x,\mathbf 1)}(X)=df_x(X)f(x)^{-1}.
	\end{equation*}
	Consequently, for each fixed admissible non-null spinor
	orbit, the spinor represented by $f$ is a generalized Killing spinor if
	and only if
	\begin{equation}\label{eq:flat-symmetry-condition}
		X\longmapsto
		l_g^{-1}\bigl(df_x(X)f(x)^{-1}\bigr)
	\end{equation}
	is symmetric. By the correspondence theorem in
	\pref{sc:unit-GKS-Lagrangian}, this is also equivalent to
	the associated section $s_f(x)=(x,f(x))$ being $\Omega$-Lagrangian.
	In particular, constant maps $f$ give parallel spinors and Lagrangian
	sections. We do not attempt to classify all symmetric solutions of
	\pref{eq:flat-symmetry-condition} here.
	

	\begin{proposition}\label{pr:GeSp-Lag-flat}
		Let $U\in\frakh$ and $\rho\in C^\infty(\bR)$ be arbitrary and put $X = l_g^{-1}(U)$.
		Then the map
		\begin{equation*}
			f\colon \frakm\ni x
			\longmapsto
			\exp\left(\rho\left(\la x,X\ra\right)U\right)\in H
		\end{equation*}
		satisfies the symmetry condition of \eqref{eq:flat-symmetry-condition}.
		Consequently, it represents a generalized Killing spinor on
		$\bR^{r,3-r}$ and the corresponding graph in
		$\bR^{r,3-r}\rtimes H$ is Lagrangian.
	\end{proposition}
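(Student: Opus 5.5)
We need to compute $df_x(Y)f(x)^{-1}$ for $f(x)=\exp(\rho(\la x,X\ra)U)$ and show that $Y\mapsto l_g^{-1}(df_x(Y)f(x)^{-1})$ is symmetric. Since the exponent always lies on the line $\bR U$, all the values $\rho(\la x,X\ra)U$ commute with one another. The derivative is therefore computed exactly as in the abelian case, with no correction from the derivative of the exponential map:
\begin{equation*}
df_x(Y)=\rho'(\la x,X\ra)\,\la Y,X\ra\,U\,f(x).
\end{equation*}
To justify this, note that $t\mapsto \exp(\phi(t)U)$ has derivative $\phi'(t)\,U\exp(\phi(t)U)$ because $\exp(aU)\exp(bU)=\exp((a+b)U)$. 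Apply this along the curve $t\mapsto x+tY$, with $\phi(t)=\rho(\la x+tY,X\ra)$ and $\phi'(0)=\rho'(\la x,X\ra)\la Y,X\ra$.

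Hence
\begin{equation*}
df_x(Y)f(x)^{-1}=\rho'(\la x,X\ra)\la Y,X\ra\,U,
\end{equation*}
and applying $l_g^{-1}$, with $l_g^{-1}(U)=X$, gives the endomorphism
\begin{equation*}
S_x(Y)=\rho'(\la x,X\ra)\,\la Y,X\ra\,X .
\end{equation*}
Up to the scalar factor $\rho'(\la x,X\ra)$, this is the rank-one map $X\otimes X^\flat$, which is symmetric because
\begin{equation*}
\la S_xY,Z\ra=\rho'\,\la Y,X\ra\la X,Z\ra=\la Y,S_xZ\ra .
\end{equation*}
This holds in both signatures, since only bilinearity and symmetry of $\la\bullet,\bullet\ra$ are used. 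So \eqref{eq:flat-symmetry-condition} is satisfied at every point.

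The two consequences then follow from earlier results. First, the flat spin connection has $s^*\omegatilde=0$ for the global section $s(x)=(x,\mathbf 1)$, so $\scM_{(x,\mathbf 1)}(Y)=df_x(Y)f(x)^{-1}$. By the equivariance \eqref{eq:scM-gauge}, symmetry of $l_g^{-1}\circ\scM$ at $(x,\mathbf 1)$ propagates to all $u\in P$, because $\lambda(h)$ is an isometry and $l_g$ is $H$-equivariant by \pref{lm:lg-Ad-isom}. Then \pref{pr:GKS-condition} shows that the spinor is a generalized Killing spinor, with $A=\frac12\rho'\,X\otimes X^\flat$, and \pref{pr:Lag-sym} shows that the graph $s_f$ is Lagrangian.

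The only step needing care is the derivative of the exponential. The argument avoids the usual $\frac{1-e^{-\ad}}{\ad}$ correction precisely because the exponent stays on the line $\bR U$; everything else is routine. It also does not matter whether $U$ is elliptic, hyperbolic or nilpotent in the Lorentzian case, since the argument never uses that.
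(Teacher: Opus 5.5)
Your proposal is correct and follows the paper's proof essentially line by line. You compute $df_x(v)f(x)^{-1}=\rho'(\la x,X\ra)\la v,X\ra U$, apply $l_g^{-1}$ to obtain the rank-one map $v\mapsto\rho'(\la x,X\ra)\la v,X\ra X$, and observe that it is symmetric. Your extra justifications are also correct and only make explicit steps the paper leaves implicit: the derivative along the one-parameter subgroup $\bR U$, and the transfer of symmetry from $(x,\mathbf 1)$ to all of $P$ via \eqref{eq:scM-gauge}.
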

	\begin{proof}
		For $v\in\frakm$, we have
		\begin{equation*}
			df_x(v)f(x)^{-1}
			=
			\rho'\left(\la x,X\ra\right)
			\la v,X\ra U.
		\end{equation*}
		Hence
		\begin{equation*}
			l_g^{-1}\bigl(df_x(v)f(x)^{-1}\bigr)
			=
			\rho'\left(\la x,X\ra\right)
			\la v,X\ra X.
		\end{equation*}
		Therefore, for $v,w\in\frakm$,
		\begin{align*}
			&\la l_g^{-1}\bigl(df_x(v)f(x)^{-1}\bigr),w\ra
			-\la l_g^{-1}\bigl(df_x(w)f(x)^{-1}\bigr),v\ra \\
			&=
			\rho'\left(\la x,X\ra\right)
			\left\{
			\la v,X\ra\la X,w\ra
			-
			\la w,X\ra\la X,v\ra
			\right\}
			=0.
		\end{align*}
		Thus \pref{eq:flat-symmetry-condition} is symmetric.
	\end{proof}
	
	
	\subsection{Generalized Killing spinors on the group-type space forms}
	\label{sc:GKS-group-space-forms}
	Let
	\begin{equation*}
		M=
		\begin{cases}
			\bS^3,&r=0,\\
			\AdS_3,&r=1,
		\end{cases}
		\qquad
		G=H_L\times H_R.
	\end{equation*}
	We identify $M$ with $H$, endowed with its normalized
	bi-invariant metric of constant sectional curvature $ (-1)^r$.
	The principal spin bundle is
	\begin{equation*}
		\pi\colon P=G\longrightarrow M,
		\qquad
		\pi(g_L,g_R)=g_Lg_R^\dagger.
	\end{equation*}
	Its two specializations are
	\begin{alignat}{2}
		\pi\colon\SU(2)_L\times\SU(2)_R
		&\longrightarrow\bS^3,
		&\qquad(g_L,g_R)
		&\longmapsto g_Lg_R^\dagger=g_Lg_R^{-1},
		\label{eq:Spin(4)-to-Spin(3)}\\
		\pi\colon\SU(1,1)_L\times\SU(1,1)_R
		&\longrightarrow\AdS_3,
		&\qquad(g_L,g_R)
		&\longmapsto g_Lg_R^\dagger.
		\label{eq:Spin(2,2)-to-Spin(1,2)}
	\end{alignat}
	The right action of the structure group $H$ is given by
	\begin{equation*}
		(g_L,g_R)\cdot h
		=\bigl(g_Lh,g_R(h^{-1})^\dagger\bigr).
	\end{equation*}
	The corresponding reductive decomposition is
		\begin{equation*}
		\frakh\oplus\frakh
		=
		\{(U,-U^\dagger):U\in\frakh\}
		\oplus
		\{(l_g(X),l_g(X)^\dagger):X\in\frakm\}.
	\end{equation*}
	It follows that the solder form and the spin
	Levi--Civita connection form are
	\begin{equation*}
		\theta
		=l_g^{-1}\frac{\omega_{MC}^L+(\omega_{MC}^R)^\dagger}{2},
		\qquad
		\omegatilde
		=\frac{\omega_{MC}^L-(\omega_{MC}^R)^\dagger}{2}.
	\end{equation*}
	When $r=0, $ we can write simply
	\begin{equation*}
		\theta=l_g^{-1}\frac{\omega_{MC}^L-\omega_{MC}^R}{2},
		\quad
		\omegatilde=\frac{\omega_{MC}^L+\omega_{MC}^R}{2},
	\end{equation*}
	where $\omega^L_{MC}$ (resp. $\omega^R_{MC}$ ) is a Maurer-Cartan form of $H_L$ (resp. $H_R$).
	We use a fixed global section $s(q)=(q,\mathbf 1)$ for $M$ in \pref{sc:GKS-group-space-forms}. Thus
	every equivariant map $F\in C^\infty(P,H)^H$ is determined by
	\begin{equation*}
		f(q)=F(q,\mathbf 1),
		\qquad
		F(g_L,g_R)=g_R^\dagger f(g_Lg_R^\dagger).
	\end{equation*}

	\begin{lemma}\label{lm:F-f-group-space-form}
		Let $\psi_\epsilon \in \Gamma(\Sigma M) $ be a spinor of unit length given by
		\[ \psi_\epsilon = [s,f\mathbb{1}_\epsilon], \]
		for every $f\in C^\infty(M,H). $ Then we have
		\[ 	\mathcal M_{(q,\mathbf1)}(X)
			=
		l_g(X)
		+df_q\bigl(2ql_g(X)\bigr)f(q)^{-1} \]
		for every $X\in\frakm$. 
	\end{lemma}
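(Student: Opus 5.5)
The plan is to apply \eqref{eq:scM1} of \pref{lm:F-f-scM} with $\sigma=s_0=s$, $s(q)=(q,\mathbf 1)$. For $\scX\in T_qM$ with $X=(s^*\theta)_q(\scX)$ it gives
\[
\scM_{(q,\mathbf 1)}(X)=df_q(\scX)f(q)^{-1}+(s^*\omegatilde)_q(\scX).
\]
The proof then reduces to two computations. First, we express the tangent vector $\scX\in T_qH$ in terms of $X$. Second, we evaluate the pulled-back connection form $s^*\omegatilde$ on it.

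For the first computation, the section is $s(q)=(q,\mathbf 1)\in H_L\times H_R$. Write $\scX=\dot q$ for a curve $q(t)$ through $q$. Then $ds(\scX)=(\dot q,0)$, so
\[
\omega^L_{MC}(ds\,\scX)=q^{-1}\dot q,\qquad \omega^R_{MC}(ds\,\scX)=0.
\]
The stated formulas for $\theta$ and $\omegatilde$ therefore give
\[
X=(s^*\theta)(\scX)=l_g^{-1}\Bigl(\tfrac12 q^{-1}\dot q\Bigr),\qquad (s^*\omegatilde)(\scX)=\tfrac12 q^{-1}\dot q .
\]
The second equality is the same for $r=0$ and $r=1$, since the right Maurer--Cartan component vanishes in both. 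Solving the first equality yields $\dot q=2q\,l_g(X)$, that is, $\scX=2ql_g(X)$, with $T_qH$ identified with $q\frakh$. Substituting back into the second equality gives $(s^*\omegatilde)_q(\scX)=l_g(X)$.

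Plugging both into \eqref{eq:scM1} gives
\[
\scM_{(q,\mathbf 1)}(X)=l_g(X)+df_q\bigl(2q\,l_g(X)\bigr)f(q)^{-1},
\]
which is the claim. Finally, I would check consistency with the description $F(g_L,g_R)=g_R^\dagger f(g_Lg_R^\dagger)$: then $f=s^*F$, as required in \pref{lm:F-f-scM}.

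The main obstacle I expect is conventions rather than ideas. One point is the normalization of the bi-invariant metric, which makes $\theta$ carry the factor $\tfrac12$ and hence produces the factor $2$ in $2ql_g(X)$. Another is the sign and the dagger in $\omegatilde$ under the right action $(g_Lh,g_R(h^{-1})^\dagger)$. Both vanish from the computation because the $R$-component of $ds$ is zero, but I would still verify that $\theta$ and $\omegatilde$ as written are the solder and connection forms at points of the form $(q,\mathbf 1)$. That verification amounts to checking that the decomposition $\frakh\oplus\frakh=\{(U,-U^\dagger)\}\oplus\{(l_gX,l_gX^\dagger)\}$ correctly splits $(q^{-1}\dot q,0)$ as $\tfrac12(q^{-1}\dot q,\,-(q^{-1}\dot q)^\dagger)+\tfrac12(q^{-1}\dot q,\,(q^{-1}\dot q)^\dagger)$, which is immediate.
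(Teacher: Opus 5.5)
Your proposal is correct and follows essentially the same route as the paper: both observe that $ds_q(2ql_g(X))=(2ql_g(X),0)$, read off $(s^*\omegatilde)_q(2ql_g(X))=l_g(X)$ from the Maurer--Cartan expression for $\omegatilde$, and conclude by \eqref{eq:scM1} of \pref{lm:F-f-scM}. Your additional step of deriving $\scX=2ql_g(X)$ from $s^*\theta$ simply makes explicit the identification $[(q,\mathbf 1),X]=2ql_g(X)$, which the paper uses without comment.
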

	
	\begin{proof}
		 For the section $s(q)=(q,\mathbf1),$ since
		\[ ds_q(2ql_g(X)) = (2ql_g(X),0), \] 
		we have
		\begin{equation*}
			(s^*\widetilde\omega)_q
			\bigl(2ql_g(X)\bigr)
			=
			l_g(X).
		\end{equation*}
		Hence the identity follows immediately from
		\pref{lm:F-f-scM}.
	\end{proof}
	
	Let $\epsilon\in\scE_r$, with
	$\scE_r$ as in \pref{sc:unit-GKS-Lagrangian}. The
	subscript $\epsilon$ equals $1$ in Riemannian case and the causal type in Lorentzian case, whereas $\scK^\pm$ describes the choice of sign in the Killing equation. Define
	\begin{align}
		\psi^{\scK^+}_\epsilon(q)
		&=
		[(g_L,g_R),g_R^\dagger\mathbb 1_\epsilon]
		=[(q,\mathbf 1),\mathbb 1_\epsilon],
		\label{eq:positive-Killing-spinor-group-space-form}\\
		\psi^{\scK^-}_\epsilon(q)
		&=
		[(g_L,g_R),g_L^{-1}\mathbb 1_\epsilon]
		=[(q,\mathbf 1),q^{-1}\mathbb 1_\epsilon].
		\label{eq:negative-Killing-spinor-group-space-form}
	\end{align}
	The representing maps $f\colon M \to H$ of the chosen sections are
	\begin{equation*}
		f^{\scK^+}(q)=\mathbf 1,
		\qquad
		f^{\scK^-}(q)=q^{-1}.
	\end{equation*}
	The inverse $g_L^{-1}$ in
	\pref{eq:negative-Killing-spinor-group-space-form} is
	essential: it agrees with $g_L^\dagger$ for $\SU(2)$ but not for
	$\SU(1,1)$. By \pref{lm:F-f-group-space-form},
	\begin{equation*}
		\nabla_\scX\psi^{\scK^\pm}_\epsilon
		=\pm\frac{(\sqrt{-1})^r}{2}\scX\cdot
		\psi^{\scK^\pm}_\epsilon.
	\end{equation*}
	Thus these spinors have associated tensors
	$A=\pm\frac12\id$. For $r=0$, their Killing numbers are
	$\pm\frac12$; for $r=1$, their Killing numbers are
	$\pm\frac{\sqrt{-1}}2$.
	
	For $Y\in\frakm$, define the left- and right-invariant
	vector fields
	\begin{equation*}
		\xi_Y^L(q)=[(q,\mathbf 1),Y],
		\qquad
		\xi_Y^R(q)=[(q,\mathbf 1),\lambda(q^{-1})Y].
	\end{equation*}
	They are divergence-free.
	\begin{proposition}\label{pr:GKS-group-space-forms}
		Let $Y\in\frakm$ be a non-null unit vector. Necessarily
		$||Y||^2=1$ when $r=0$. For $\epsilon\in\scE_r$, set
		\begin{equation*}
			\epsilon_{Y}
			=(-1)^r||Y||^2\epsilon\in\scE_r.
		\end{equation*}
		Then
		\begin{equation*}
			\psi^R_{Y,\epsilon}
			=\xi_Y^R\cdot\psi^{\scK^+}_{\epsilon_{Y}},
			\qquad
			\psi^L_{Y,\epsilon}
			=\xi_Y^L\cdot\psi^{\scK^-}_{\epsilon_{Y}}
		\end{equation*}
		are $\epsilon$-unit generalized Killing spinors.
		In the gauge determined by the global section $s\colon M\ni q\mapsto (q,\mathbf{1})\in P$, their spinor components are
		\begin{align*}
			\psi^R_{Y,\epsilon}(q)
			&=
			[(q,\mathbf 1),
			q^{-1}\gamma(Y)q\mathbb 1_{\epsilon_{Y}}],\\
			\psi^L_{Y,\epsilon}(q)
			&=
			[(q,\mathbf 1),
			\gamma(Y)q^{-1}\mathbb 1_{\epsilon_{Y}}].
		\end{align*}
		Their associated tensors are
		\begin{align}
			A_Y^R(\scX)
			&=
			-\frac32\scX
			+2||Y||^2g(\xi_Y^R,\scX)\xi_Y^R,
			\label{eq:A-right-group-space-form}\\
			A_Y^L(\scX)
			&=
			\frac32\scX
			-2||Y||^2g(\xi_Y^L,\scX)\xi_Y^L.
			\label{eq:A-left-group-space-form}
		\end{align}
	\end{proposition}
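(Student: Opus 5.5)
We work throughout in the gauge given by the global section $s(q)=(q,\mathbf 1)$ and read everything off from \pref{lm:F-f-group-space-form} together with \pref{pr:GKS-condition}. The plan has three steps. First, identify the spinor components. Second, compute $\scM_{(q,\mathbf 1)}$ for the two representing maps $f^R(q)=q^{-1}\gamma(Y)q$ and $f^L(q)=\gamma(Y)q^{-1}$. Third, check that $A=\frac12 l_g^{-1}\bfM$ is symmetric and equals the stated tensors.

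\textbf{Step 1: spinor components and normalisation.} Using $\xi\cdot\psi=[u,\iota_u^{-1}\xi\cdot\varphi(u)]$, we have $\iota^{-1}_{(q,\mathbf1)}\xi^R_Y=\lambda(q^{-1})Y$ and $\gamma(\lambda(h)X)=h\gamma(X)h^{-1}$. Hence
\[
\xi^R_Y\cdot\psi^{\scK^+}_{\epsilon_Y}=[(q,\mathbf 1),q^{-1}\gamma(Y)q\mathbb 1_{\epsilon_Y}],
\]
and similarly $\xi^L_Y\cdot\psi^{\scK^-}_{\epsilon_Y}=[(q,\mathbf1),\gamma(Y)q^{-1}\mathbb1_{\epsilon_Y}]$.

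Next, $(\sqrt{-1})^r\gamma(Y)\in\frakh$. Since $\gamma(Y)^2=-(-1)^r\|Y\|^2\,\mathbf 1$ for the given gamma matrices, $\gamma(Y)$ itself, up to the scalar $(\sqrt{-1})^r$, lies in $H$ when $\|Y\|^2=\pm1$ is appropriately signed. In general, $\gamma(Y)$ maps $\mathbb 1_{\epsilon_Y}$ to a multiple of an element of $H\mathbb 1_{\epsilon}$. The length then follows from $\la\gamma(Y)\phi,\gamma(Y)\phi\ra=(-1)^r\|Y\|^2\la\phi,\phi\ra$. This identity comes from the skew-adjointness of $(\sqrt{-1})^r\gamma_k$ shown earlier, and it gives $\la\psi,\psi\ra=\epsilon$. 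So $\psi$ is $\epsilon$-unit and is represented by an $H$-valued map, up to a constant phase that does not affect $\scM$.

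\textbf{Step 2: computing $\scM$.} Let $Z=(\sqrt{-1})^r\gamma(Y)$ up to normalisation, write $V=l_g(X)$, and use $df_q(2qV)$. For $f^R=q^{-1}Zq$ we get
\[
df(2qV)f^{-1}=2\bigl(-Vq^{-1}Zq+q^{-1}ZqV\bigr)(q^{-1}Zq)^{-1}=-2V+2\Ad_{f^R}V,
\]
so $\scM(X)=-l_g(X)+2\Ad_{f^R}l_g(X)$. For $f^L=Zq^{-1}$ we get $df(2qV)f^{-1}=-2ZVZ^{-1}$, so $\scM(X)=l_g(X)-2\Ad_{Z}l_g(X)$.

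By \pref{lm:lg-Ad-isom}, $l_g^{-1}\Ad_h l_g=\lambda(h)$. Here $\lambda(Z)$ is the rotation by $\pi$ about $Y$, namely $X\mapsto -X+2\|Y\|^{2}\la X,Y\ra Y$ for a unit non-null $Y$, where $\|Y\|^2=\pm1$. Likewise $\lambda(q^{-1}Zq)$ is the rotation about $\lambda(q^{-1})Y$.

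\textbf{Step 3: the tensors and symmetry.} Substituting gives
\[
\tfrac12 l_g^{-1}\scM^R(X)=-\tfrac32X+2\|Y\|^2\la X,Y'\ra Y',\qquad Y'=\lambda(q^{-1})Y,
\]
and similarly $\tfrac12 l_g^{-1}\scM^L(X)=\tfrac32X-2\|Y\|^2\la X,Y\ra Y$. Translating back through $\iota_{(q,\mathbf1)}$ yields \eqref{eq:A-right-group-space-form} and \eqref{eq:A-left-group-space-form}. Both expressions are manifestly $g$-symmetric, so \pref{pr:GKS-condition} shows the spinors are generalized Killing spinors.

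The main obstacle is the sign bookkeeping in Lorentzian signature. We must check that the half-turn formula $\lambda(Z)=-\id+2\|Y\|^2Y\otimes Y^\flat$ holds with the correct sign when $Y$ is timelike, so that the coefficient in the statement comes out as $2\|Y\|^2$ rather than $\pm2$. We must also check that the causal type is flipped exactly by $\epsilon_Y=(-1)^r\|Y\|^2\epsilon$. I would first verify both on $Y=X_0$ and $Y=X_1$ directly with the explicit matrices, then extend by $H$-equivariance.
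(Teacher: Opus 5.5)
Your strategy is the same as the paper's: compute $\scM_{(q,\mathbf 1)}$ from Lemma~\ref{lm:F-f-group-space-form} and read off $A=\frac12 l_g^{-1}\scM$. The spinor components in Step~1 and your final formulas are correct. However, Step~2 fails as written when $r=1$ and $Y$ is spacelike.

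In that case $\gamma(Y)^\dagger\gamma_0\gamma(Y)=\gamma_0\gamma(Y)^2=-\gamma_0$. So no scalar multiple of $\gamma(Y)$ lies in $H=\SU(1,1)$; this is exactly why Clifford multiplication by $Y$ flips the causal type. Two things follow:
\begin{itemize}
\item $q^{-1}Zq$ and $Zq^{-1}$ are not $H$-valued representing maps.
\item Lemma~\ref{lm:lg-Ad-isom} cannot identify conjugation by $Z$ with a half-turn. The half-turn about a spacelike axis is $-\id$ on a Lorentzian plane, so it reverses time orientation and does not lie in $\lambda(H)=\SO_0(1,2)$ at all.
\end{itemize}
Separately, your relation $\gamma(Y)^2=-(-1)^r\|Y\|^2 I_2$ is wrong for $r=1$: for example $\gamma_0^2=I_2$ while $\la X_0,X_0\ra=-1$. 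The correct relation in both signatures is $\gamma(Y)^2=-\|Y\|^2 I_2$. It is this relation that yields your (correct) identity $\la\gamma(Y)\phi,\gamma(Y)\phi\ra=(-1)^r\|Y\|^2\la\phi,\phi\ra$.

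The repair is short.
\begin{itemize}
\item In the gauge $s$ the spin connection acts by $\nabla_\scX[s,\phi]=[s,d\phi(\scX)+(s^*\omegatilde)(\scX)\phi]$. Hence $\nabla_\scX[s,gv]=[s,(dg(\scX)g^{-1}+(s^*\omegatilde)(\scX))gv]$ for any $\GL(2,\bC)$-valued $g$ and constant $v$. So you may take $g=q^{-1}\gamma(Y)q$ (resp.\ $g=\gamma(Y)q^{-1}$) and $v=\mathbb 1_{\epsilon_Y}$ directly.
\item Equivalently, the genuine $H$-valued representative is $g$ times a constant matrix. For spacelike $Y$, $\pm g\gamma_1$ works, since $g$ and $\gamma_1$ both have determinant $1$ and reverse the sign of $\la\cdot,\cdot\ra$. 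A constant right factor does not change $dg\,g^{-1}$.
\item Replace Lemma~\ref{lm:lg-Ad-isom} by the Clifford identity $\gamma(Y)\gamma(X)\gamma(Y)^{-1}=\gamma\bigl(-X+2\|Y\|^2\la X,Y\ra Y\bigr)$. It follows from $\gamma(X)\gamma(Y)+\gamma(Y)\gamma(X)=-2\la X,Y\ra I_2$ and holds for every non-null unit $Y$ in either signature.
\end{itemize}
This identity shows that $dg\,g^{-1}$ is $\frakh$-valued and produces the coefficient $2\|Y\|^2$ uniformly. The case-by-case sign check you anticipated is therefore unnecessary. With these changes your Step~3 goes through unchanged and gives the stated $A_Y^R$ and $A_Y^L$.
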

	\begin{proof}
		By \pref{lm:F-f-group-space-form}, the same direct calculation as in
		the two special cases gives, for $\iota_{(q,\mathbf 1)}X=\scX|_q$,
		\begin{align*}
			\left.\nabla_{\scX}\psi^{\scK^+}_\epsilon\right|_q
			&=
			\frac{(\sqrt{-1})^r}{2}\scX\cdot
			\psi^{\scK^+}_\epsilon(q),\\
			\left.\nabla_{\scX}\psi^{\scK^-}_\epsilon\right|_q
			&=
			-\frac{(\sqrt{-1})^r}{2}\scX\cdot
			\psi^{\scK^-}_\epsilon(q),\\
			\left.\nabla_{\scX}\psi^R_{Y,\epsilon}\right|_q
			&=
			(\sqrt{-1})^r
			\left(-\frac32\scX
			+2||Y||^2g(\xi_Y^R,\scX)\xi_Y^R\right)
			\cdot\psi^R_{Y,\epsilon}(q),\\
			\left.\nabla_{\scX}\psi^L_{Y,\epsilon}\right|_q
			&=
			(\sqrt{-1})^r
			\left(\frac32\scX
			-2||Y||^2g(\xi_Y^L,\scX)\xi_Y^L\right)
			\cdot\psi^L_{Y,\epsilon}(q).
		\end{align*}
		Hence both associated tensors given by
		\pref{eq:A-right-group-space-form} and
		\ref{eq:A-left-group-space-form} are symmetric.
	\end{proof}
	
	The eigenvalue of $A_Y^R$ on $\bR\xi_Y^R$ is
	$\frac12$, and its eigenvalue on $(\xi_Y^R)^\perp$ is
	$-\frac32$. The eigenvalue of $A_Y^L$ on $\bR\xi_Y^L$ is
	$-\frac12$, and its eigenvalue on $(\xi_Y^L)^\perp$ is
	$\frac32$. Moreover, since $\gamma(\xi)^\dagger(\gamma_0)^r = (-1)^{r+1}(\gamma_0)^r\gamma(\xi), $ then
	\begin{equation}\label{eq:causal-type-after-Clifford}
		\la\xi\cdot\psi,\xi\cdot\psi\ra
		=(-1)^r g(\xi,\xi)\la\psi,\psi\ra.
	\end{equation}
	Thus, in the Lorentzian case, Clifford multiplication
	by a timelike unit vector preserves the causal type of a spinor, whereas
	multiplication by a spacelike unit vector interchanges the spacelike and
	timelike orbits. The subscript $\epsilon$ in
	$\psi^R_{Y,\epsilon}$ and $\psi^L_{Y,\epsilon}$ describes the causal
	type of the resulting spinor, while $\epsilon_{Y}$ describes
	that of the Killing spinor before Clifford multiplication. The superscripts $R$ and $L$ describe which
	invariant vector field, and hence which Killing-number branch, is used.
	For $r=0$, the preceding proposition recovers the
	unit-length generalized Killing spinors on $\bS^3$ described by
	Moroianu and Semmelmann \cite{moroianu2014generalized}. The four
	families are summarized uniformly in
	\pref{tb:GKS-group-space-forms}.
	
	\begin{table}[t]
		\centering
		\small
		\renewcommand{\arraystretch}{1.4}
		\setlength{\tabcolsep}{5pt}
		\begin{tabular}{ccc}
			\hline
			Spinor
			&$A(\scX)$
			&Eigenvalues of $A$
			\\
			\hline
			
			$\psi^{\scK^+}_\epsilon$
			&$\frac12\scX$
			&$\frac12\;(3)$
			\\
			
			$\psi^{\scK^-}_\epsilon$
			&$-\frac12\scX$
			&$-\frac12\;(3)$
			\\
			
			$\psi^R_{Y,\epsilon}$
			&$\displaystyle
			-\frac32\scX
			+2||Y||^2g(\xi_Y^R,\scX)\xi_Y^R$
			&$\frac12\;(1),\ -\frac32\;(2)$
			\\
			
			$\psi^L_{Y,\epsilon}$
			&$\displaystyle
			\frac32\scX
			-2||Y||^2g(\xi_Y^L,\scX)\xi_Y^L$
			&$-\frac12\;(1),\ \frac32\;(2)$
			\\
			
			\hline
		\end{tabular}
		\caption[Generalized Killing spinors on the group-type space forms]
		{Normalized non-null generalized Killing spinors on
			$\bS^3$ and $\AdS_3$. In the Riemannian case, $\epsilon$ takes only
			the value $+1$. The multiplicities of the eigenvalues are
			indicated in parentheses.}
		\label{tb:GKS-group-space-forms}
		\label{tb:GKS-on-S3}
		\label{tb:GKS-on-AdS3}
	\end{table}
	
	The symmetric endomorphisms associated with the four types of generalized Killing spinors in
	\pref{tb:GKS-group-space-forms} are all diagonalizable. These four types do not exhaust the Lorentzian examples arising from extrinsically homogeneous Lagrangian sections on $\AdS_3$.
	
	Starting from the Bianchi type~III family classified by Anarella
	\cite[Example~26]{anarella2026extrinsically}, we construct in \pref{sc:null-Jordan-calculations}
	a discrete family of smooth maps
	\[ f_m\colon\SU(1,1)\longrightarrow\SU(1,1),
	\qquad m\geq2, \]
	whose associated sections are Lagrangian.  
	
	\begin{theorem}
		\label{th:null-Jordan-GKS}
		Let $\ell_m$ be a nowhere-vanishing null vector field \eqref{eq:null-vector-field-m} and let $f_m $ be a $\SU(1,1)$-valued function \eqref{eq:fm} on $\AdS_3$ for each integer $m \geq 2$. 
		Then for every
		$\epsilon\in\scE_1$, the spinor
		\begin{equation*}
			\psi_{m,\epsilon}(q)
			=
			[(q,\mathbf1),f_m(q)\mathbb{1}_\epsilon]
		\end{equation*}
		is an $\epsilon$-unit generalized Killing spinor on
		$\AdS_3$.
		The two causal types have the same symmetric endomorphism $A_m \in \Gamma(\End(T\AdS_3)) $ in \pref{eq:GKS} given by
		\begin{equation}\label{eq:explicit-null-Jordan-tensor}
			A_m
			=
			\frac{1}{2}\id
			-\frac{1}{9}\ell_m\otimes\ell_m^\flat,
			\quad
			\ell_m^\flat=g(\ell_m,\mathord\cdot).
		\end{equation}
		
		In particular, if
		\[
		N_m
		\coloneqq
		A_m-\frac{1}{2}\id,
		\]
		then, for every $q\in\AdS_3$,
		\begin{equation}\label{eq:null-Jordan-properties}
			N_m|_q\neq0,
			\qquad
			\operatorname{rank}N_m|_q=1,
			\qquad
			(N_m|_q)^2=0.
		\end{equation}
		Consequently, $A_m$ is not diagonalizable at any point
		$q\in\AdS_3$.
	\end{theorem}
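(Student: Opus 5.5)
The plan is to reduce everything to the algebraic criterion of \pref{pr:GKS-condition} through \pref{lm:F-f-group-space-form}. By that lemma, in the gauge $s(q)=(q,\mathbf 1)$ the map to compute is
\[
\scM_{(q,\mathbf 1)}(X)=l_g(X)+df_m|_q\bigl(2q\,l_g(X)\bigr)f_m(q)^{-1},
\]
and the spinor $[(q,\mathbf1),f_m(q)\mathbb 1_\epsilon]$ is a generalized Killing spinor exactly when $l_g^{-1}\scM$ is symmetric. In that case $A_m=\frac12 l_g^{-1}\bfM$. Since $\scM$ depends only on $f_m$ and not on $\epsilon$, both causal types automatically have the same $A_m$. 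Note also that $l_g^{-1}(l_g X)=X$ contributes $\frac12\id$. So the whole statement reduces to the identity
\[
l_g^{-1}\Bigl(df_m|_q\bigl(2q\,l_g(X)\bigr)f_m(q)^{-1}\Bigr)=-\tfrac{2}{9}\,g(\ell_m,\scX)\,\ell_m ,
\]
where $\scX=\xi^L_X$ is the left-invariant field at $q$. The right-hand side is manifestly symmetric, which gives both the generalized Killing property and \eqref{eq:explicit-null-Jordan-tensor}.

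To verify this identity, I will use the explicit formulas \eqref{eq:fm} and \eqref{eq:null-vector-field-m} from \pref{sc:null-Jordan-calculations}. Starting from the Bianchi type~III description of Anarella, I expect $f_m$ to be a product of one-parameter subgroups in coordinates adapted to a null direction. The plan is to write it as $f_m=\exp(\phi\,U_0)\cdot k$, with $U_0=l_g(L)$ for a fixed null $L\in\frakm$, $\phi$ a scalar function, and $k$ a fixed or simply varying factor. Then
\[
df_m\,f_m^{-1}=d\phi\,U_0+\Ad_{\exp(\phi U_0)}(dk\,k^{-1}).
\]
The right-invariant Maurer--Cartan form of $f_m$ is then evaluated on the left-invariant fields $2q\,l_g(X_k)$ for the basis $X_0,X_1,X_2$. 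The goal is to show that it has the form $\alpha(\scX)\,l_g(\ell)$ with $\alpha$ proportional to $\ell^\flat$. This is the pattern already seen in \pref{pr:GeSp-Lag-flat}, where $d\rho$ is proportional to $\la\cdot,X\ra$, adapted now to the null $X$ and to the curved group. Matching the constant $-\frac29$ fixes the normalization of $\ell_m$.

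The main obstacle is this explicit differentiation. I need to identify $\ell_m$ with $\Ad$-transported data, namely $\iota_{(q,\mathbf1)}^{-1}\ell_m=\lambda(\cdot)L$, and to check that the cross terms from the non-abelian factor cancel for each integer $m\ge2$. The integrality of $m$ is presumably what makes $f_m$ globally smooth and single-valued on $\AdS_3$. I would first test the computation at $q=\mathbf1$, and then use the equivariance \eqref{eq:scM-gauge}, together with left translation, to propagate it.

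Finally, the properties \eqref{eq:null-Jordan-properties} follow from $N_m=-\frac19\ell_m\otimes\ell_m^\flat$. It is nonzero with rank one because $\ell_m$ is nowhere vanishing. Its square is
\[
N_m^2=\tfrac1{81}\,g(\ell_m,\ell_m)\,\ell_m\otimes\ell_m^\flat=0
\]
by nullity. A nonzero nilpotent $N_m$ is not diagonalizable, so $A_m=\frac12\id+N_m$ is not diagonalizable at any point.
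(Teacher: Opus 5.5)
Your reduction and your final paragraph are correct and match the paper. By \pref{lm:F-f-group-space-form} and \pref{pr:GKS-condition}, the theorem is equivalent to the identity
\[
l_g^{-1}\bigl(df_m(2q\,l_g(X))\,f_m(q)^{-1}\bigr)=-\tfrac29\,g(\ell_m,\scX)\,\ell_m ,
\]
and the rank-one nilpotent argument for $N_m=-\frac19\ell_m\otimes\ell_m^\flat$ is exactly what finishes the proof. The gap is that this identity, which is the whole content of the theorem, is never established. The route you sketch for it also rests on a wrong picture of $f_m$. By \eqref{eq:fm}, $f_m(q)=k_m(v)b_m(v)^{-1}$, where $v$ is the third coordinate of the decomposition $q=a(u,w)k_m(v)^{-1}$ of \pref{lm:Iwasawa-null-Jordan}. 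Here $k_m$ and $b_m$ are one-parameter subgroups generated by the timelike vectors $Z_m$ and $Y_m$, so $f_m$ is not of the form $\exp(\phi\,l_g(L))\cdot k$ with $L$ null. The null direction enters only through the relations $Z_m-Y_m=-\frac13\ell$ and $\la\ell,Z_m\ra=-3$ of \eqref{eq:Ym-Zm-relations}, which your proposal never uses. For the same reason, the constant $\frac19$ cannot be obtained by ``matching'' a normalization. The field $\ell_m$ is already fixed by \eqref{eq:null-vector-field-m}, so the coefficient has to come out of the computation.

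The missing computation is short. Since $k=k_m(v)$ and $b=b_m(v)$ are one-parameter subgroups in the same parameter, and $\Ad_k l_g(Z_m)=l_g(Z_m)$, you get
\[
df_m\,f_m^{-1}=dk\,k^{-1}-\Ad_k(b^{-1}db)=2\,dv\,\Ad_k\,l_g(Z_m-Y_m)=-\tfrac23\,dv\;l_g\bigl(\lambda(k)\ell\bigr),
\]
so there are no cross terms to cancel. Differentiating $q=ak^{-1}$ gives $q^{-1}dq=2l_g\bigl(\lambda(k)(X_a-dv\,Z_m)\bigr)$, where $a^{-1}da=2l_g(X_a)$ with $X_a\in\operatorname{span}(X_2,\ell)\subset\ell^\perp$. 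Hence $g(\ell_m,\scX)=\la\lambda(k)\ell,X\ra=3\,dv(\scX)$, i.e.\ $dv=\frac13\ell_m^\flat$. Substituting gives the identity with coefficient $-\frac23\cdot\frac13=-\frac29$.

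Your fallback of checking at $q=\mathbf 1$ and propagating via \eqref{eq:scM-gauge} and left translation does not work as stated. First, \eqref{eq:scM-gauge} only compares points in the same fiber. Second, left translation by $a(u',w')$ preserves $f_m$, but its orbits are only two-dimensional. Moving in $v$ requires the twisted relation $f_m(q\,k_m(v')^{-1})=k_m(v')\,f_m(q)\,b_m(v')^{-1}$, which encodes the extrinsic homogeneity of $s_m$ and would also have to be proved. The direct computation at an arbitrary point, as in the paper, avoids this entirely.
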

	
	The construction of $f_m$ and a joint proof of
	\pref{th:null-Jordan-GKS} and
	\pref{pr:null-Jordan-Lagrangian-sections}
	are given in \pref{sc:null-Jordan-calculations}.

	\subsection{Lagrangian sections on the group-type space forms}
	\label{sc:Lagrangian-group-space-forms}
	
	At the fixed global section $ s\colon M\ni q\mapsto (q,\mathbf 1)\in P$, we have
	\begin{equation*}
		X^H|_{(q,\mathbf 1)}
		=\bigl(ql_g(X),l_g(X)^\dagger\bigr),
		\qquad
		U^\#|_{(q,\mathbf 1)}
		=\bigl(qU,-U^\dagger\bigr).
	\end{equation*}
	Let $F\in C^\infty(P,H)^H$ and
	$f(q)=F(q,\mathbf 1)$. The associated section is
	\begin{equation}\label{eq:Lagrangian-section-from-f-group-space-form}
		s_F(q)
		=(q,\mathbf 1)\cdot f(q)
		=\bigl(qf(q),(f(q)^{-1})^\dagger\bigr).
	\end{equation}
	By \pref{lm:F-f-group-space-form},
	\begin{equation*}
		\scM_{(q,\mathbf 1)}(X)
		=l_g(X)+df_q\bigl(2ql_g(X)\bigr)f(q)^{-1}.
	\end{equation*}
	Therefore, for each fixed non-null spinor orbit, the
	following conditions are equivalent:
	\begin{enumerate}
		\item the spinor represented by $f$ is a generalized
		Killing spinor,
		\item $l_g^{-1}\circ\scM_{(q,\mathbf 1)}$ is
		symmetric for every $q\in M$,
		\item the section $s_F$ is Lagrangian with respect to
		$\Omega$.
	\end{enumerate}
	In the Lorentzian case, the same section produces one
	spacelike and one timelike generalized Killing spinor, as stated in
	the correspondence \pref{th:GKS-LagGraph-main} in \pref{sc:unit-GKS-Lagrangian}.
	
	For the two families of Killing spinors defined in
	\pref{eq:positive-Killing-spinor-group-space-form} and
	\pref{eq:negative-Killing-spinor-group-space-form}, the representing $H$-valued functions are
	\begin{equation*}
		f^{\scK^+}(q)=\mathbf 1,
		\qquad
		f^{\scK^-}(q)=q^{-1}.
	\end{equation*}
	The corresponding Lagrangian sections are
	\begin{equation*}
		s_{\scK^+}(q)=(q,\mathbf 1),
		\qquad
		s_{\scK^-}(q)=\bigl(\mathbf 1,q^\dagger\bigr).
	\end{equation*}
	The generalized Killing spinors in
	\pref{pr:GKS-group-space-forms} likewise determine Lagrangian sections
	after the causal orbit of the resulting spinor has been fixed.
	
	For the last two families, let
	$f^R_{Y,\epsilon},f^L_{Y,\epsilon}\colon M\to H$
	be the unique $H$-valued maps determined by
	\begin{align*}
		f^R_{Y,\epsilon}(q)\mathbb 1_\epsilon
		&=
		q^{-1}\gamma(Y)q\mathbb 1_{\epsilon_Y},\\
		f^L_{Y,\epsilon}(q)\mathbb 1_\epsilon
		&=
		\gamma(Y)q^{-1}\mathbb 1_{\epsilon_Y}.
	\end{align*}
	The representing functions are summarized as follows.
	
	\begin{table}[t]
		\centering
		\small
		\renewcommand{\arraystretch}{1.5}
		\setlength{\tabcolsep}{6pt}
		\begin{tabular}{cc}
			\hline
			Spinor
			&$H$-valued representing function
			\\
			\hline
			
			$\psi^{\scK^+}_\epsilon$
			&
			$f^{\scK^+}(q)=\mathbf 1$
			\\
			
			$\psi^{\scK^-}_\epsilon$
			&
			$f^{\scK^-}(q)=q^{-1}$
			\\
			
			$\psi^R_{Y,\epsilon}$
			&
			$\begin{aligned}
				f^R_{Y,\epsilon}(q)\mathbb{1}_\epsilon
				&=q^{-1}\gamma(Y)q\mathbb{1}_{\epsilon_Y}
			\end{aligned}$
			\\
			
			$\psi^L_{Y,\epsilon}$
			&
			$\begin{aligned}
				f^L_{Y,\epsilon}(q)\mathbb{1}_{\epsilon}
				&=
				\gamma(Y)q^{-1}\mathbb{1}_{\epsilon_Y}
			\end{aligned}$
			\\
			
			\hline
		\end{tabular}
		\caption[$H$-valued functions on the group-type space forms]
		{The $H$-valued functions representing the normalized non-null
			generalized Killing spinors on $\bS^3$ and $\AdS_3$.
			In the last two rows, the functions are uniquely determined
			after the non-null spinor orbit has been fixed.}
		\label{tb:H-valued-functions-group-space-forms}
	\end{table}

	\begin{corollary}
		Each $H$-valued function listed in
		\pref{tb:H-valued-functions-group-space-forms}
		determines an $\Omega$-Lagrangian section of $P$.
		In particular, when
		$c=\sqrt{3}$, these sections are Lagrangian
		sections of the nearly pseudo-K\"ahler structure
		$\lb\scG_c,\scJ_c\rb$ on $P$.
	\end{corollary}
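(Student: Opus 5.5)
The corollary has two assertions, and both follow from results already in the paper. For the first, I will combine the equivalence list stated just before \pref{tb:H-valued-functions-group-space-forms} with \pref{pr:GKS-group-space-forms} and the Killing spinor computation. That list says, for a fixed non-null spinor orbit, that $f$ represents a generalized Killing spinor if and only if $s_F(q)=(q,\mathbf 1)\cdot f(q)$ is $\Omega$-Lagrangian. This is the group-space-form specialization of \pref{pr:GKS-condition} and \pref{pr:Lag-sym}, with $\scM_{(q,\mathbf 1)}$ supplied by \pref{lm:F-f-group-space-form}.

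\textbf{The four rows of the table.} For $f^{\scK^\pm}$ we already know $A=\pm\frac12\id$, which is symmetric. For $f^R_{Y,\epsilon}$ and $f^L_{Y,\epsilon}$, \pref{pr:GKS-group-space-forms} gives the tensors \eqref{eq:A-right-group-space-form} and \eqref{eq:A-left-group-space-form}. Each is a combination of $\id$ and $\xi\otimes\xi^\flat$, hence $g$-symmetric. By \pref{pr:GKS-condition}, $A=\frac12 l_g^{-1}\bfM$, so $l_g^{-1}\scM$ is symmetric at every point, and \pref{pr:Lag-sym} then gives $s_F^*\Omega=0$.

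\textbf{Well-definedness of $f^{R}_{Y,\epsilon}$ and $f^L_{Y,\epsilon}$.} I must check that these maps are uniquely determined $H$-valued maps. The vector $q^{-1}\gamma(Y)q\mathbb 1_{\epsilon_Y}$ has norm $\epsilon$ by \eqref{eq:causal-type-after-Clifford}. The group $H$ acts simply transitively on each non-null norm level set in $\bC^2$: $\SU(2)$ on $\bS^3$, and $\SU(1,1)$ on each of the two hyperboloids $\{|z_1|^2-|z_2|^2=\pm1\}$. Hence there is a unique smooth $h$ with $h\mathbb 1_\epsilon$ equal to the prescribed spinor. Smoothness follows because the orbit map $h\mapsto h\mathbb 1_\epsilon$ is a diffeomorphism onto the orbit. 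This step, the identification of the orbit with $H$, is the only point needing care in the Lorentzian case; I expect it to be routine.

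\textbf{The nearly pseudo-K\"ahler statement.} The fundamental form $\Omega$ does not depend on $c$, so Lagrangianity with respect to $\Omega$ is the same for every $c$. The base $M$ carries the normalized bi-invariant metric with sectional curvature $(-1)^r$. By \pref{th:NK-str}, $(\scG_c,\scJ_c)$ is nearly pseudo-K\"ahler exactly when $(-1)^r=\frac{(-1)^r}{3}c^2$, that is, when $c=\sqrt3$. The sections are therefore Lagrangian for the nearly pseudo-K\"ahler structure $(\scG_{\sqrt3},\scJ_{\sqrt3})$.
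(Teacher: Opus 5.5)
Your proposal is correct and is essentially the paper's argument: the functions in the table represent the Killing spinors and the generalized Killing spinors of Proposition~\ref{pr:GKS-group-space-forms}, so the correspondence of Propositions~\ref{pr:GKS-condition} and~\ref{pr:Lag-sym} (i.e.\ Theorem~\ref{th:GKS-LagGraph-main}) gives $s_f^*\Omega=0$, and Theorem~\ref{th:NK-str} with base curvature $(-1)^r$ forces $c=\sqrt{3}$. Your extra checks fill in details the paper leaves implicit: the uniqueness and smoothness of $f^R_{Y,\epsilon}$ and $f^L_{Y,\epsilon}$ via the simply transitive action of $H$ on the non-null level sets, and the $c$-independence of $\Omega$.
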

	
	\begin{proof}
		Let $f\colon M\to H$ be one of the $H$-valued
		functions listed in
		\pref{tb:H-valued-functions-group-space-forms}.
		By \pref{eq:Lagrangian-section-from-f-group-space-form},
		the corresponding section is
		\begin{equation*}
			s_f(q)
			=
			\lb
			qf(q),
			\lb f(q)^{-1}\rb^\dagger
			\rb.
		\end{equation*}
		By construction, $f$ represents the corresponding
		generalized Killing spinor in
		\pref{tb:GKS-group-space-forms}.
		Hence \pref{th:GKS-LagGraph-main} implies
		\begin{equation*}
			s_f^*\Omega=0.
		\end{equation*}
		Therefore, $s_f$ is an $\Omega$-Lagrangian section.
		
		Finally, when $c=\sqrt{3}$,
		the almost pseudo-Hermitian structure
		$\lb\scG_c,\scJ_c\rb$ is nearly pseudo-K\"ahler
		by \pref{th:NK-str}.
	\end{proof}
	
	When $c=\sqrt{3}, $ we construct a family of extrinsically homogeneous Lagrangian submanifolds of nearly pseudo-K\"ahler structure on $\SU(1,1)_L\times \SU(1,1)_R $ using $\{f_m\in C^\infty(\SU(1,1),\SU(1,1))\}_{m=2}^\infty$ given by \pref{eq:fm} in \pref{sc:null-Jordan-calculations}.
	\begin{proposition}
		\label{pr:null-Jordan-Lagrangian-sections}
		For every integer $m\geq2$, let
		\[
		f_m\colon\SU(1,1)\longrightarrow\SU(1,1)
		\]
		be the smooth map constructed in
		\pref{sc:null-Jordan-calculations}, and let
		\[
		s_m(q)
		=
		\left(
		qf_m(q),
		\bigl(f_m(q)^{-1}\bigr)^\dagger
		\right).
		\]
		Then $s_m$ is an $\Omega$-Lagrangian section of the principal
		spin bundle over $\AdS_3$.
	\end{proposition}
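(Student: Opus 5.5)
The plan is to reduce the proposition to \pref{th:null-Jordan-GKS} through the correspondence of \pref{sc:unit-GKS-Lagrangian}, and to supply the one genuinely computational ingredient, the symmetry of $l_g^{-1}\circ\scM_{(q,\mathbf 1)}$ for $f_m$, by a direct calculation.

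First, take $F_m\in C^\infty(P,H)^H$ to be the equivariant extension $F_m(g_L,g_R)=g_R^\dagger f_m(g_Lg_R^\dagger)$ of $f_m$, which is determined by the fixed global section $s(q)=(q,\mathbf 1)$. By \pref{eq:Lagrangian-section-from-f-group-space-form}, the associated section $s_{F_m}=s\cdot f_m$ is exactly
\[
s_m(q)=\bigl(qf_m(q),(f_m(q)^{-1})^\dagger\bigr).
\]
By \pref{pr:Lag-sym}, $s_m$ is $\Omega$-Lagrangian if and only if $l_g^{-1}\circ\scM_u$ is symmetric for every $u\in P$. The gauge relation \eqref{eq:scM-gauge} together with the $H$-equivariance of $l_g$ (\pref{lm:lg-Ad-isom}) shows that symmetry at one point of each fibre implies it along the whole fibre. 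It therefore suffices to check symmetry at $u=(q,\mathbf 1)$.

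At $u=(q,\mathbf 1)$, \pref{lm:F-f-group-space-form} gives
\[
l_g^{-1}\scM_{(q,\mathbf 1)}(X)=X+l_g^{-1}\bigl(df_q(2ql_g(X))f_m(q)^{-1}\bigr).
\]
By \pref{pr:GKS-condition} this equals $2A_m(X)$ in the left-invariant frame. If \pref{th:null-Jordan-GKS} is available, then $A_m=\frac12\id-\frac19\ell_m\otimes\ell_m^\flat$ is visibly $g$-symmetric, since $g(\ell_m\otimes\ell_m^\flat(\scX),\scY)=g(\ell_m,\scX)g(\ell_m,\scY)$. Combining \pref{pr:GKS-condition} with \pref{pr:Lag-sym} (equivalently, \pref{th:GKS-LagGraph-main}) then yields $s_m^*\Omega=0$. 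Both causal types $\epsilon=\pm1$ give the same section, so nothing further is needed.

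The main obstacle is that \pref{th:null-Jordan-GKS} and this proposition are proved jointly in \pref{sc:null-Jordan-calculations}, so the real work is the explicit computation of the Maurer--Cartan quantity $df_m(2ql_g(X))f_m^{-1}$ for the explicit formula \eqref{eq:fm}. My approach would be to exploit the extrinsic homogeneity of Anarella's Bianchi type~III family. I would write $q$ in coordinates adapted to a solvable subgroup acting simply transitively, in which $f_m$ should take the form of a product of one-parameter subgroups, say $\exp(a(q)U_1)\exp(b(q)U_2)$ with $U_1,U_2$ spanning a null-plus-boost subalgebra of $\mathfrak{su}(1,1)$. Applying the product rule, $d(\exp(aU_1)\exp(bU_2))\exp(\cdot)^{-1}=da\,U_1+db\,\Ad_{\exp(aU_1)}U_2$, which reduces $\scM$ to a sum of rank-one terms of the form $\alpha\otimes l_g^{-1}V$. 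Symmetry then becomes the requirement that each covector $\alpha$ be proportional to $\la l_g^{-1}V,\cdot\ra$, up to the symmetric part $X\mapsto X$.

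I expect these proportionality identities to hold precisely because the integer $m\geq2$ is chosen so that the exponents match. The residual term should then assemble into $-\frac29\ell_m\otimes\ell_m^\flat$, with $\ell_m$ null. The identity $\la\ell_m,\ell_m\ra=0$ also yields the nilpotency stated in \eqref{eq:null-Jordan-properties}.
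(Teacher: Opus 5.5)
Your reduction is correct and matches how the paper concludes: $s_m=s\cdot f_m$, and \pref{pr:Lag-sym} with \eqref{eq:scM-gauge} reduces the claim to the $g$-symmetry of $l_g^{-1}\scM_{(q,\mathbf 1)}$. The gap is that this symmetry is never established, and it is the whole content of the joint proof of \pref{th:null-Jordan-GKS} and the proposition. Invoking the theorem is therefore circular, as you yourself note.

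The plan you give in its place rests on an incorrect picture of $f_m$. In \eqref{eq:fm} one has $q=a(u,w)k_m(v)^{-1}$ and
\[
f_m(q)=k_m(v)b_m(v)^{-1}=\exp\bigl(2v\,l_g(Z_m)\bigr)\exp\bigl(-2v\,l_g(Y_m)\bigr).
\]
This is a product of two \emph{elliptic} one-parameter subgroups ($Y_m,Z_m$ are timelike) at the same parameter, and it depends only on the circle coordinate $v$. The null and boost generators $\ell$, $X_2$ appear only in the $AN$-factor $a(u,w)$ of the base point, not in $f_m$.

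The integrality of $m$ also plays no role in the symmetry. It only makes $T_m=2\pi/\omega_m$ a common period of $b_m$ and $k_m$, so that $f_m$ is well defined on $\bR/T_m\bZ$, hence on $\SU(1,1)$ via \pref{lm:Iwasawa-null-Jordan}. The symmetry comes from the $\omega_m$-independent identities $Z_m-Y_m=-\frac13\ell$, $\la\ell,Z_m\ra=-3$ and $\la\ell,\ell\ra=\la\ell,X_2\ra=0$.

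The missing step, as in the paper, is to differentiate $q=ak^{-1}$ and $f_m(q)=kb^{-1}$ simultaneously along the image of $\phi_m$. Its tangent vectors are $\bigl(a\,2l_g(X_a),b\,2l_g(X_b),k\,2l_g(X_k)\bigr)$ with $(X_a,X_b,X_k)=r(-X_2,0,0)+s(\frac12\ell,0,0)+t(0,Y_m,Z_m)$. This gives
\[
q^{-1}\dot q=2l_g\bigl(\lambda(k)(X_a-X_k)\bigr),\qquad
df_m(\dot q)f_m(q)^{-1}=2l_g\bigl(\lambda(k)(X_k-X_b)\bigr),
\]
where $X\coloneqq\lambda(k)(X_a-X_k)$ ranges over all of $\frakm$. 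The identities above give $\la\ell,X_a-X_k\ra=3t$ and $X_k-X_b=-\frac t3\ell=-\frac19\la\ell,X_a-X_k\ra\ell$. Then \pref{lm:F-f-group-space-form} yields
\[
l_g^{-1}\scM_{(q,\mathbf 1)}(X)=X-\frac29\la\lambda(k)\ell,X\ra\,\lambda(k)\ell,
\]
which is manifestly symmetric, and \pref{pr:Lag-sym} finishes the argument.

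Your product-rule idea can be rescued, but only with $U_1=2l_g(Z_m)$, $U_2=2l_g(Y_m)$ and coefficients $v,-v$. It gives $df_mf_m^{-1}=-\frac23\,dv\;l_g(\lambda(k)\ell)$. You would still need $dv(\dot q)=\frac13\la\lambda(k)\ell,X\ra$, which is exactly the computation of $q^{-1}\dot q$ above.
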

	
	The construction of $f_m$ and a joint proof of
	\pref{th:null-Jordan-GKS} and
	\pref{pr:null-Jordan-Lagrangian-sections}
	are given in \pref{sc:null-Jordan-calculations}.

	
	\section{Lie groups}\label{sc:Lie-group}
	
	In this section, we let $M = G$ be a connected Lie group equipped with a left-invariant pseudo-Riemannian metric. 
	We use the notion of $G$-invariant spin structures and spinors as introduced at the beginning of \pref{sc:constant-curvature spaces} so that lifted $G$-action on $P$ is written by
	\[ \Ltilde_a \colon P\ni (k,h)\mapsto (ak,h) \in P = G\times H, \quad \forall a\in G. \]
	A section $s\in \Gamma(P)$ is called $G$-equivariant if the following equation holds.
	\begin{equation*}
		\Ltilde_a s(q) = s(aq) ,\quad \forall a,q \in G.
	\end{equation*}
	We choose a global section
	\[ s\colon G\ni a \mapsto (a,\mathbf1)\in P=G\times H \] and write left-invariant orthonormal frame as follows
	\[ \scX_k=[s,X_k], \quad k=0,1,2. \] 
	We denote by
	\[
	\tau\colon\frakg\ni \scX\mapsto \tr(\ad_{\scX}) \in \bR
	\]
	the modular form.
	Recall that a Lie group is called unimodular if the linear transformation $\Ad(g) 	$ has determinant $\pm 1$ for every $g\in G.$
	Since $G$ is connected, it is equivalent to $\tau(\scX) = 0$ for every $\scX\in \frakg.$

	\subsection{Invariant generalized Killing spinors}
	\begin{theorem}\label{th:invariant-GKS-unimodular}
		Let $G$ be a connected three-dimensional Lie group with a
		left-invariant pseudo-Riemannian metric and its $G$-invariant spin structure.
		Then for a fixed $\epsilon \in \scE_r, $ the following conditions are equivalent:
		\begin{enumerate}
			\item $G$ is unimodular,
			\item some invariant $\epsilon$-unit spinor is a generalized Killing spinor,
			\item every invariant $\epsilon$-unit spinor is a generalized Killing spinor,
			\item some $G$-equivariant section of $P$ is Lagrangian,
			\item every $G$-equivariant section of $P$ is Lagrangian.
		\end{enumerate}
		Moreover, in this case all invariant generalized Killing spinors have
		the same associated symmetric endomorphism.
	\end{theorem}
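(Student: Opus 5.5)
The plan is to reduce every condition in the statement to a single condition on the fixed section $s(a)=(a,\mathbf1)$, and then to \pref{cr:constant-map-GKS}.

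\textbf{Step 1: invariant spinors and equivariant sections are constant maps.} Under the lifted action $\Ltilde_a(k,h)=(ak,h)$, write a spinor as $\psi_\epsilon(a)=[s(a),f(a)\mathbb 1_\epsilon]$. Since $\Ltilde_b s(a)=s(ba)$, $G$-invariance of $\psi_\epsilon$ means $f(ba)\mathbb 1_\epsilon=f(a)\mathbb 1_\epsilon$ for all $a,b\in G$.

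The unit vectors in the orbit $H\mathbb 1_\epsilon$ have stabiliser a circle, so $f$ is only fixed modulo that circle. I will therefore use the normalisation implicit in \eqref{eq:unit-length-spinor}, choosing $f$ constant, $f\equiv h_0$. Then $F(s(a)h)=h^{-1}h_0$, which is exactly the situation of \pref{cr:constant-map-GKS}.

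Similarly, a $G$-equivariant section $s'$ satisfies $s'(a)=\Ltilde_a s'(\mathbf1)$. Writing $s'(\mathbf1)=(\mathbf1,h_0)$ gives $s'(a)=(a,h_0)=s(a)h_0=s_F(a)$ for the same $F$. So invariant $\epsilon$-unit spinors and $G$-equivariant sections are both parametrised by $h_0\in H$.

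\textbf{Step 2: equivalence of (2)--(5).} By \pref{cr:constant-map-GKS}, for each $h_0$ the statements ``the invariant spinor is a generalized Killing spinor'' and ``$s h_0$ is Lagrangian'' are each equivalent to condition (4) of that corollary. That condition says the left-invariant frame $\{\scX_k\}_{k=0}^2$ is divergence-free, and it does not involve $h_0$. Hence ``some'' and ``every'' coincide, and (2), (3), (4), (5) are all equivalent to: the frame $\{\scX_k\}$ is divergence-free.

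\textbf{Step 3: divergence-free frame $\iff$ unimodular.} This is the only genuine computation, and the one place where sign conventions and the indefinite signature need care. I expect no real obstacle here.

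For a left-invariant field $\scX$, write $\varepsilon_i=\la X_i,X_i\ra$. Using torsion-freeness $\nabla_{\scX_i}\scX=\nabla_{\scX}\scX_i+[\scX_i,\scX]$, compute
\[
\operatorname{div}\scX=\sum_i\varepsilon_i\, g(\nabla_{\scX_i}\scX,\scX_i)=\sum_i\varepsilon_i\, g([\scX_i,\scX],\scX_i)+\sum_i\varepsilon_i\, g(\nabla_{\scX}\scX_i,\scX_i).
\]
The last sum vanishes because each $g(\scX_i,\scX_i)=\varepsilon_i$ is constant. The first sum is $-\sum_i\varepsilon_i\, g(\ad_{\scX}\scX_i,\scX_i)=-\tr(\ad_{\scX})=-\tau(\scX)$. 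Thus every $\scX_k$ is divergence-free iff $\tau$ vanishes on a basis of $\frakg$, i.e.\ iff $G$ is unimodular. This gives (1)$\iff$(2)--(5).

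\textbf{Step 4: the common endomorphism.} By \pref{pr:GKS-condition}, $A=\tfrac12 l_g^{-1}\bfM$. By \eqref{eq:scM1} with $f\equiv h_0$ constant, $\scM_{s(a)}(X)=(s^*\omegatilde)_a(\scX)$, where $\scX=[s,X]$. This expression is independent of $h_0$, and $\bfM(\scX)=[s(a),\scM_{s(a)}(X)]$ is evaluated at the same point $u=s(a)$ for every $h_0$. Therefore all invariant generalized Killing spinors share the endomorphism
\[
A=\tfrac12\, l_g^{-1}\circ s^*\omegatilde.
\]
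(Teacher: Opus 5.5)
Your proof is correct and follows the paper's own route. Invariant spinors and equivariant sections are given by constant maps $f\equiv h_0$; Corollary~\ref{cr:constant-map-GKS} then reduces (2)--(5) to the $h_0$-independent condition that $\{\scX_k\}$ is divergence-free; and $\operatorname{div}\scX=-\tau(\scX)$ for left-invariant $\scX$. You obtain the last identity from torsion-freeness rather than the $U$-formula, and read off $A=\tfrac12\, l_g^{-1}\circ s^*\omegatilde$ directly from \eqref{eq:scM1} at $u=s(a)$ rather than via \eqref{eq:scM-gauge} at $s_f$; both variations are fine.

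One harmless inaccuracy: the stabiliser of $\mathbb 1_\epsilon$ in $H$ is trivial, not a circle. The vector $h\mathbb 1_\epsilon$ is the first (resp.\ second) column of $h$, and that column already determines $h\in H$. So invariance, $f(ba)\mathbb 1_\epsilon=f(a)\mathbb 1_\epsilon$, forces $f$ to be constant outright, and no normalisation is needed.
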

	
	\begin{proof}
		Let $\{\scX_k=[s,X_k]\}_{k=0}^2$ be the left-invariant orthonormal frame
		determined by $s$. 
		For every left-invariant vector field $\scX$, the Levi-Civita connection of a left-invariant metric is given by
		\[ \nabla_\scX \scY=\frac12[\scX,\scY]+U(\scX,\scY),\]
		where $U$ is the symmetric bilinear map characterized by
		\[
		2g(U(\scX,\scY),\scZ)
		= g([\scZ,\scX],\scY)+g(\scX,[\scZ,\scY]), \quad \forall \scX,\scY,\scZ \in \frakX^L(G) \simeq \frakg.
		\]
		\begin{align}
			\sdiv \scX_j & = g^{lk}g\lb \frac{1}{2}[\scX_l,\scX_j]+U(\scX_l,\scX_j),\scX_k \rb \notag\\
			& = g^{lk}g\lb [\scX_l,\scX_j],\scX_k\rb\notag \\
			& = -\tr(\ad_{\scX_j}), \quad \forall j\in \{0,1,2\}\label{eq:div-free-frame-Lie group}
		\end{align}
		Since $G$ is connected, it is unimodular if and only if for every $\scX\in \frakg, $
		\[ \tau(\scX) = 0. \]
		This is equivalent to the left-invariant oriented orthonormal frame being divergence-free by \pref{eq:div-free-frame-Lie group}.
		Since the left-invariant trivialization determines a $G$-invariant spin structure $P=G\times H$, then its left action on itself lifts on $P.$
		Then a section of $\pi\colon P\to G$ defined as $s_f = s\cdot f$ in \pref{eq:section-torsor}:
		\begin{equation}\label{eq:s_f-Lie}
			s_f(q) = (q,f(q)) \quad \forall q\in G
		\end{equation} 
		is $G$-equivariant if and only if $f\in C^\infty(G,H) $ is constant.
		By \pref{cr:constant-map-GKS}, for every $h_0\in H, $ as the constant image of $f$, the following are equivalent:
		\begin{enumerate}\renewcommand{\labelenumi}{(\roman{enumi})}
			\item the invariant unit-length spinor represented by $f = s^*F \in C^\infty(G,H) $ is a generalized Killing spinor,
			\item the section $s_f$ given in \pref{eq:s_f-Lie} is Lagrangian,
			\item $\sdiv(\scX_k)=0, $ for all $ k\in \{0,1,2\}$.
		\end{enumerate}
		Moreover, these conditions are independent of $h_0 \in H.$
		Consequently, if $G$ is unimodular, every left-invariant oriented orthonormal frame is divergence-free, and hence every invariant spinor of unit length is a generalized Killing spinor and every $G$-equivariant section is Lagrangian.
		Thus
		\[
		(1)\Longrightarrow(3)\Longrightarrow(2),
		\qquad
		(1)\Longrightarrow(5)\Longrightarrow(4).
		\]
		Conversely, the existence of either an invariant generalized Killing spinor of unit length or an equivariant Lagrangian section implies existence of a divergence-free left-invariant oriented orthonormal frame, 
		and therefore implies $G$ is unimodular.
		
		It remains to prove the final assertion. 
		For the representing $H$-valued function $F\in C^\infty(P,H)^H, $ its expression relative to $s$ is the constant map $ f(q) = F\circ s(q) = h_0. $
		Hence $df = 0, $ then by \pref{eq:scM-gauge} in \pref{lm:scM-def} and \pref{lm:lg-Ad-isom},
		\begin{align*}
			\scM_{s_f(x)}(X_f) & = \Ad_{h_0^{-1}}\scM_{s(x)}(\lambda(h_0)X_f), \quad X_f = s_f^*\theta_x(\scX)\in \frakm\\
			&= \Ad_{h_0^{-1}}\scM_{s(x)}(X), \quad X = s^*\theta_x(\scX)\in \frakm,\\
			[s_f(x),l_g^{-1}\scM_{s_f(x)}(X_f)] & = [s(x)h_0,\lambda(h_0)^{-1}l_g^{-1}\scM_{s(x)}(X)] = [s(x),l_g^{-1}\scM_{s(x)}(X)]\in T_xG,
		\end{align*}
		where $\scX|_x = [s_f(x), X_f] = [s(x),X]\in T_xG.$
		The symmetric endomorphism associated with the corresponding generalized Killing spinor is consequently given by \[ A(\scX) = \frac{1}{2}\ld s, l_g^{-1} (s^*\omegatilde)(\scX)\rd. \]
		This expression depends only on the fixed invariant section $s$ and its induced frame, and is independent of $h_0\in H$ and $\epsilon\in \scE_r.$ Thus all invariant generalized Killing spinors have the same associated symmetric endomorphism.
	\end{proof}
	\begin{example}\label{eg:sol}
		Let $G = \operatorname{Sol} $ be a Lie group of Bianchi type~VI given by
		\[ G\times G \ni \lb (t,x,y), (s,u,v) \rb \longmapsto
		\lb t+s,\ x+e^{-at}u,\ y+e^{at}v\rb \in G. \]
		Assume that $0\neq a \in \bR $ and we put the following left-invariant frame on $G$
		\begin{equation}\label{eq:unimodular-invariant-frame}
			\scT = \partial_t,\quad \scX=e^{-at}\partial_x,\quad \scY = e^{at}\partial_y. 
		\end{equation}
		Put the corresponding global section $\sigma\colon G \ni (t,x,y) \longmapsto ((t,x,y),\mathbf1) \in P$ such that
			\[ \scT = [\sigma,X_0],\quad \scX = [\sigma,X_1],\quad \scY = [\sigma, X_2].\]
		Since nonzero Lie brackets are
		\[ [\scT,\scX] = -a\scX,\quad [\scT,\scY] = a\scY, \] then the modular form is $\tau = 0.$ Hence $G=\operatorname{Sol}$ is unimodular.
		For each left-invariant pseudo-Riemannian metric
		\begin{equation*}
			g_r = (-1)^rdt^2 + e^{2at}dx^2 +e^{-2at}dy^2, 
		\end{equation*}
		\pref{eq:unimodular-invariant-frame} is a divergence-free left-invariant orthonormal frame. 
		By \pref{th:invariant-GKS-unimodular}, every constant map $f\in C^\infty(G,H) $
		defines a $G$-invariant generalized Killing spinor and a corresponding Lagrangian section.
		The corresponding symmetric endomorphisms are obtained in \pref{eq:A-diagonal} by substituting $\nu(t)=at.$
	\end{example}
	
	\subsection{Non-invariant generalized Killing spinors on Lie groups}
	\pref{th:invariant-GKS-unimodular} treats invariant spinors only.
	In this subsection, we obtain some results for generalized Killing spinors that are not left-invariant.
	
	\begin{lemma}
		Let $\{\scX_k=[s,X_k]\}_{k=0}^2 $ be an orthonormal left-invariant frame on $G$ and $0\neq K \in \mathfrak{so}(\frakg,g).$
		For $\vartheta \in C^\infty(G), $ we set
		\[ Q(x) = \exp(\vartheta(x)K),\quad \xi_k = Q\scX_k, \quad \beta_x(\scX) = d\vartheta_x(\scX). \]
		Then $(\xi_0,\xi_1,\xi_2) $ is an orthonormal frame that satisfies 
		\begin{equation*}
			\sdiv\xi_k(x) =(\beta_x \circ K-\tau)(Q(x)\scX_k),\quad \forall x\in G, \quad \forall k\in \{0,1,2\}.
		\end{equation*}
	\end{lemma}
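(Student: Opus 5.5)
Identify the left-invariant frame $\{\scX_k\}_{k=0}^2$ with a basis of $\frakg$ and regard $Q(x)=\exp(\vartheta(x)K)$ as a function $G\to\operatorname{SO}(\frakg,g)$. Write $\xi_k=\sum_j Q_{jk}\scX_j$, where the $Q_{jk}\in C^\infty(G)$ are the matrix entries of $Q$ in this basis. The proof then has three steps: show the $\xi_k$ are orthonormal, expand $\sdiv\xi_k$ with the Leibniz rule, and compute the derivative of $Q$.

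For orthonormality: since $K\in\mathfrak{so}(\frakg,g)$, each $Q(x)$ is an isometry of $(\frakg,g)$. The metric is left-invariant, so the pointwise inner products $g(\xi_j,\xi_k)$ equal $g(Q\scX_j,Q\scX_k)=g(\scX_j,\scX_k)$. Hence $\{\xi_k\}$ is again orthonormal.

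For the divergence, apply $\sdiv(f\scY)=\scY(f)+f\,\sdiv\scY$ together with \eqref{eq:div-free-frame-Lie group}, which gives $\sdiv\scX_j=-\tau(\scX_j)$. This yields
\begin{equation*}
\sdiv\xi_k=\sum_j \scX_j(Q_{jk})-\sum_j Q_{jk}\,\tau(\scX_j)=\sum_j \scX_j(Q_{jk})-\tau(Q\scX_k),
\end{equation*}
where the last equality uses linearity of $\tau$.

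For the derivative of $Q$: all the matrices $\vartheta(x)K$ commute with one another, so $dQ_x(\scX)=d\vartheta_x(\scX)\,KQ(x)=\beta_x(\scX)\,KQ(x)$. In particular $\scX_j(Q_{jk})=\beta(\scX_j)\,(KQ)_{jk}$. Summing over $j$ gives
\begin{equation*}
\sum_j\beta(\scX_j)(KQ)_{jk}=\beta\Bigl(\sum_j (KQ)_{jk}\scX_j\Bigr)=\beta(KQ\scX_k).
\end{equation*}
No metric coefficients $g^{jj}$ appear here, because these are components in the basis and not inner products. Combining the two displays gives $\sdiv\xi_k=(\beta\circ K-\tau)(Q\scX_k)$, which is the claim.

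The only delicate point is bookkeeping: the divergence formula must be applied with $\sdiv\scX_j=-\tau(\scX_j)$, which is valid in arbitrary signature as established earlier in the proof of \pref{th:invariant-GKS-unimodular}. Everything else is the product rule together with the commutation of $K$ with $\exp(\vartheta K)$.
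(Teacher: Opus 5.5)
Your proof is correct and follows the paper's route. You expand $\xi_k=Q^i{}_k\scX_i$, apply the Leibniz rule for the divergence, use $\scX_j(Q)=d\vartheta(\scX_j)\,KQ$, and invoke $\sdiv\scX_i=-\tau(\scX_i)$ from \eqref{eq:div-free-frame-Lie group}; you also check orthonormality of the $\xi_k$, which the paper leaves implicit.
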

	
	\begin{proof}
		Since $Q=\exp(\vartheta K), $ we have
		\[ \scX_j(Q)\scX_k = d\vartheta(\scX_j)KQ\scX_k, \]
		Hence
		\begin{align*}
			\sdiv(Q\scX_k) &= g^{lj}g(\scX_l, \nabla_{\scX_j}(Q\scX_k))\\
			& = g^{lj}g(\scX_l, \nabla_{\scX_j}(Q^i_k\scX_i))\\
			& = g^{lj}g(\scX_l, \scX_j(Q^i_k)\scX_i + Q^i_k\nabla_{\scX_j}\scX_i)\\
			& = \delta^j_i\scX_j(Q^i_k) + Q^i_kg^{lj}g(\scX_l, \nabla_{\scX_j}\scX_i)\\
			& = d\vartheta(KQ\scX_k) +Q^i_k\sdiv(\scX_i)\\
			& = (\beta\circ K-\tau)(Q\scX_k).
		\end{align*}
	\end{proof}
	\begin{example}\label{eg:Aff}
		Let $G = \Aff^+(\bR)\times \bR $ be a Lie group of Bianchi type~III given by
		\[ G\times G \ni \bigl((t,x,y),(s,u,v)\bigr)
		\longmapsto
		\bigl(t+s,\ x+e^{-at}u,\ y+v\bigr) \in G \]
		for $ a \neq 0. $ We put the following left-invariant frame on $G$
		\begin{equation}\label{eq:non-unimodular-invariant-frame}
			\scT = \partial_t,\quad \scX=e^{-at}\partial_x,\quad \scY = \partial_y. 
		\end{equation}
		The only non-zero Lie bracket is $[\scT,\scX] = -a\scX $, then the modular form is $\tau = -adt.$
		For left-invariant pseudo-Riemannian metric 
		\begin{equation}\label{eq:non-unimodular-metric}
			g_r = (-1)^rdt^2 + e^{2at}dx^2 +dy^2, 
		\end{equation}
		we choose corresponding section $\sigma\colon G \ni (t,x,y) \longmapsto ((t,x,y),\mathbf1) \in P$ such that
		\[ \scT = [\sigma,X_0],\quad \scX = [\sigma,X_1],\quad \scY = [\sigma, X_2], \]
		and define $\vartheta(t,x,y) = ay $ and $ K_r\colon \frakg \to \frakg $ as follows
		\begin{equation*}
			K_r(\scT) = -\scY, \quad K_r(\scX) = 0,\quad K_r(\scY) = (-1)^r\scT.
		\end{equation*}
		\pref{eq:non-unimodular-invariant-frame} is an orthonormal frame for both metrics \eqref{eq:non-unimodular-metric}.
		Since we have $ \beta\circ K_r = \tau,$
		the following non-invariant frames are divergence-free.
		\begin{equation*}
			(\xi_0,\xi_1,\xi_2) = 
			\begin{cases}
				(\cos(ay)\scT-\sin(ay)\scY,\scX,\sin(ay)\scT+\cos(ay)\scY)\quad & (r=0)\\
				(\cosh(ay)\scT-\sinh(ay)\scY,\scX,-\sinh(ay)\scT+\cosh(ay)\scY)\quad & (r=1)
			\end{cases}.
		\end{equation*}
		Let $\widehat K_r\in\frakh$ be the unique element satisfying $\lambda_*(\widehat K_r)=K_r$, i.e.
		\[
		\Khat_r = l_g(X_1) = \frac{(\sqrt{-1})^r}{2}\gamma_1.
		\]
		and define
		\[
		f_r\colon G\ni (t,x,y)\longmapsto \exp\bigl(\vartheta(t,x,y)\widehat K_r\bigr)\in H,
		\]
		i.e.
		\[ f_0(t,x,y) = \begin{pmatrix}
			\cos\frac{ay}{2} & \sin\frac{ay}{2}\\
			-\sin\frac{ay}{2} & \cos\frac{ay}{2}
		\end{pmatrix}, \quad f_1(t,x,y) = \begin{pmatrix}
			\cosh\frac{ay}{2} & \sqrt{-1}\sinh\frac{ay}{2}\\
			-\sqrt{-1}\sinh\frac{ay}{2} & \cosh\frac{ay}{2}
		\end{pmatrix}. \]
		Equivalently, let $F_r\in C^\infty(P,H)^H$ be the
		$H$-equivariant function determined by
		\[
		F_r(\sigma(t,x,y)h)=h^{-1}f_r(t,x,y).
		\]
		For every $\epsilon\in\scE_r$, an $\epsilon$-unit spinor
		\[
		\psi_{r,\epsilon}(t,x,y)
		=
		[\sigma(t,x,y),f_r(t,x,y)\mathbb1_\epsilon]
		\]
		is a generalized Killing spinor.
		By \pref{lm:F-f-scM} and \pref{pr:GKS-condition}, since we have
		\[ df_rf_r^{-1} = al_g(X_1)\otimes\varepsilon^2,\quad \sigma^*\omegatilde = al_g(X_2)\otimes\varepsilon^1, \]
		then the symmetric endomorphism of the generalized Killing spinor is given by
		\[ A = \frac{1}{2}a\lb \scX_1 \otimes \varepsilon^2 + \scX_2\otimes \varepsilon^1 \rb, \]
		where $ \scX_1 = \scX,\ \varepsilon^1 = e^{at}dx $ and $ \scX_2 = \scY, \ \varepsilon^2 = dy. $
		By construction, its characteristic frame is
		$(\xi_0,\xi_1,\xi_2)$.  Since this frame is divergence-free,
		$\psi_{r,\epsilon}$ is a generalized Killing spinor.
		Moreover, by \pref{th:GKS-LagGraph-main}, the corresponding section
		\[
		\sigma_{f_r}(t,x,y)
		=\sigma(t,x,y)f_r(t,x,y)
		=
		\bigl((t,x,y),f_r(t,x,y)\bigr)
		\]
		is an $\Omega$-Lagrangian section of $P$.
		Since $a\ne0$, the map $f_r$ is nonconstant; hence
		$\psi_{r,\epsilon}$ is non-invariant and $\sigma_{f_r}$ is not
		$G$-equivariant.
	\end{example}


	\section{Nonhomogeneous examples}\label{sc:nonhomogeneous}
	In this section, we generalize \pref{eg:sol} by replacing the linear
	function of $t$ appearing in its metric with an arbitrary smooth
	function $\nu(t)$. We thereby obtain examples of unit-length
	generalized Killing spinors and the corresponding Lagrangian sections
	on nonhomogeneous pseudo-Riemannian manifolds.
	
	For $r=0,1, $ let 
	\begin{equation*}
		(M,g_r)=\left(\mathbb R^3,
		(-1)^rdt^2+e^{2\nu(t)}dx^2+e^{-2\nu(t)}dy^2\right),
	\end{equation*}
	be pseudo-Riemannian spin manifolds
	and consider the oriented orthonormal frame and its dual coframe
	\begin{align*}
		\scX_0&=\partial_t,\qquad \scX_1=e^{-\nu(t)}\partial_x,
		\qquad \scX_2=e^{\nu(t)}\partial_y,\\
		\varepsilon^0&=dt,\qquad \varepsilon^1=e^{\nu(t)}dx,
		\qquad \varepsilon^2=e^{-\nu(t)}dy.
	\end{align*}
	The only nonzero brackets and covariant derivatives are
	\begin{align*}
		[\scX_0,\scX_1]&=-\nu' \scX_1,& [\scX_2,\scX_0]&=-\nu' \scX_2,\\
		\nabla_{\scX_1}\scX_0&=\nu' \scX_1,&
		\nabla_{\scX_1}\scX_1&=-(-1)^r\nu' \scX_0,\\
		\nabla_{\scX_2}\scX_0&=-\nu' \scX_2,&
		\nabla_{\scX_2}\scX_2&=(-1)^r\nu' \scX_0.
	\end{align*}
	In particular,
	\begin{equation}\label{eq:diagonal-div-free}
		\sdiv(\scX_0)=\sdiv(\scX_1)
		=\sdiv(\scX_2)=0.
	\end{equation}
	
	Choose a global section $s\colon \bR^3\to P$, so that
	the above basis represented by $\scX_k=[s,X_k]$ for an orthonormal basis $\{X_k\in\frakm\}_{k=0}^2.$
	Then the $H$-equivariant function $F\in C^\infty(P,H)^H$ describing the frame $\{\scX_k\}_{k=0}^2$ is
	\begin{equation*}
		F(s(x)h)=h^{-1}.
	\end{equation*}
	Thus the corresponding unit-length spinor and section are
	\begin{equation*}
		\psi_{0,\epsilon}=[s,\mathbb 1_\epsilon],\quad s_F(q)=s(q).
	\end{equation*}
	Consequently, $s_F$ is the graph of the constant map $\mathbf{1} \in H$ in this
	trivialization. 
	
	For the	 gamma matrices used above, the spin connection is
	\begin{equation*}
		s^*\widetilde\omega
		=\frac{(\sqrt{-1})^r}{2}\nu'\left(\gamma_2\otimes\varepsilon^1
		+\gamma_1\otimes\varepsilon^2\right).
	\end{equation*}
	It follows that
	\begin{equation*}
		\nabla_{\scX_0}\psi_{0,\epsilon}=0,\qquad
		\nabla_{\scX_1}\psi_{0,\epsilon}=\frac{(\sqrt{-1})^r}{2}\nu' \scX_2\cdot\psi_{0,\epsilon},
		\qquad
		\nabla_{\scX_2}\psi_{0,\epsilon}=\frac{(\sqrt{-1})^r}{2}\nu' \scX_1\cdot\psi_{0,\epsilon}.
	\end{equation*}
	Hence $\psi_{0,\epsilon}$ is a generalized Killing spinor with a symmetric endomorphism tensor field
	\begin{equation}\label{eq:A-diagonal}
		A=\frac{\nu'}{2}\left(\scX_2\otimes\varepsilon^1
		+\scX_1\otimes\varepsilon^2\right).
	\end{equation}
	This also follows immediately from \eqref{eq:diagonal-div-free} and the
	divergence-free-frame characterization.
	The tangent space of the corresponding graph is not horizontal unless $\nu'=0$.  At $s(x)$,
	\begin{align*}
		ds(\scX_0)&=X^H_0,\\
		ds(\scX_1)&=X^H_1+\lb \nu'l_g(X_2)\rb^\#,\\
		ds(\scX_2)&=X^H_2+\lb \nu'l_g(X_1)\rb^\#.
	\end{align*}
	Since $F\circ s=\mathbf{1}$, then we have
	\begin{equation*}
		\mathcal M_s(X_i)
		=dF_s(X^H_i)F(s)^{-1}
		=(s^*\widetilde\omega)(\scX_i).
	\end{equation*}
	With the normalization \eqref{eq:lg-gamma},
	we obtain
	\begin{equation*}
		l_g^{-1}\mathcal M_s
		=\nu'\left(X_2\otimes X_1^\flat
		+X_1\otimes X_2^\flat \right).
	\end{equation*}
	This operator is symmetric, and therefore $s_{F}$ is Lagrangian.

	We next construct non-constant examples. For an
	arbitrary smooth function $\vartheta\colon\mathbb R\to\mathbb R$, we put
	\begin{equation*}
		h_\vartheta(t)
		=\exp(2\vartheta(t)l_g(X_0))
		=\exp((\sqrt{-1})^r\vartheta(t)\gamma_0)
		=\begin{pmatrix}
			e^{\sqrt{-1}\vartheta(t)}&0\\
			0&e^{-\sqrt{-1}\vartheta(t)}
		\end{pmatrix}\in H,
	\end{equation*}
	and
	\begin{equation*}
		F_\vartheta(s(q)h)=h^{-1}h_\vartheta(t),
		\quad
		\psi_{\vartheta,\epsilon} = [s,h_\vartheta\mathbb{1}_\epsilon], \quad s_{F_\vartheta} = sh_\vartheta.
	\end{equation*}
	Since
	\[
	dh_\vartheta(\scX_0)h_\vartheta^{-1}
	=2\vartheta'l_g(X_0),
	\qquad
	dh_\vartheta(\scX_1)
	=dh_\vartheta(\scX_2)=0,
	\]
	we obtain
	\begin{align*}
		\scM^\vartheta_{s(p)}(X_0)
		&=2\vartheta'l_g(X_0)
		=(\sqrt{-1})^r\vartheta'\gamma_0,\notag\\
		\scM^\vartheta_{s(p)}(X_1)
		&=\nu'l_g(X_2)
		=\frac{(\sqrt{-1})^r}{2}\nu'\gamma_2,\\
		\scM^\vartheta_{s(p)}(X_2)
		&=\nu'l_g(X_1)
		=\frac{(\sqrt{-1})^r}{2}\nu'\gamma_1,\notag
	\end{align*}
	where, we write $\scM^\vartheta_u(X) = dF_{\vartheta}|_u(X^H)F_\vartheta(u)^{-1} $ for an arbitrary $X\in \frakm.$ 
	Thus every $\psi_{\vartheta,\epsilon}$ is a generalized Killing
	spinor with a symmetric tensor
	\begin{equation*}
		A_\vartheta
		=\vartheta'\scX_0\otimes\varepsilon^0
		+\frac{\nu'}2\left(
		\scX_2\otimes\varepsilon^1+
		\scX_1\otimes\varepsilon^2\right).
	\end{equation*}
	The corresponding section $s_{F_\vartheta}$ given by
	\[ s_{F_\vartheta}(t,x,y) = ((t,x,y), h_\vartheta(t)), \quad \forall(t,x,y) \in M=\bR^3 \]
	is Lagrangian.
	
	Since the characteristic frame of the generalized Killing spinor $\psi_{\vartheta,\epsilon}$ is given by \pref{eq:characteristic-frame-GKS}
	\begin{align}
		\xi_0^\vartheta&=\scX_0,\notag\\
		\xi_1^\vartheta
		&=\cos(2\vartheta)\scX_1+
		\sin(2\vartheta)\scX_2,\label{eq:vartheta-frame}\\
		\xi_2^\vartheta
		&=-\sin(2\vartheta)\scX_1+
		\cos(2\vartheta)\scX_2, \notag
	\end{align}
	then
	\[
	\sdiv(\xi_k^\vartheta)
	=\scX_i\bigl((\lambda(h_\vartheta))^i{}_k\bigr)
	+(\lambda(h_\vartheta))^i{}_k\sdiv(\scX_i)=0.
	\]
	The second term vanishes by \eqref{eq:diagonal-div-free}; in the
	first term, every non-constant coefficient depends only on $t$ and
	appears only in the $\scX_1$- and $\scX_2$-components.
	This provides a second verification of both the generalized Killing
	and Lagrangian properties.
	
	More generally, the characteristic frame associated with the gauge
	$h_\vartheta h_0$, with constant $h_0\in H$, is obtained from
	\eqref{eq:vartheta-frame} by a constant linear combination and is
	therefore still divergence-free.
	In particular, $\vartheta\equiv0$ recovers all constant $H$-valued
	maps in \pref{cr:constant-map-GKS}.
	
	\appendix
	\renewcommand{\thetheorem}{\Alph{section}.\arabic{theorem}}
	\renewcommand{\theequation}{\Alph{section}.\arabic{equation}}
	
	\section{Generalized Killing spinors with non diagonal symmetric endomorphisms on $\AdS_3$}\label{sc:null-Jordan-calculations}
	Throughout this appendix, we assume $c=\sqrt{3}$ and $r=1.$
	The nearly pseudo-K\"ahler structure on $ \SL(2,\bR)\times \SL(2,\bR) $ is studied in \cite{schafer2010nearly}.
	We concern the Bianchi-type~III family of extrinsically homogeneous Lagrangian submanifolds of the nearly pseudo-K\"ahler structure on $\SL(2,\bR)\times \SL(2,\bR)$ in \cite[Example~26]{anarella2026extrinsically} directly in the $\SU(1,1)$ notation of this paper under the following isomorphism
	\begin{equation*}
		\scC\colon \SL(2,\bR)\ni x\mapsto C_0xC_0^{-1}\in \SU(1,1),\quad C_0 = 
		\frac{1}{\sqrt{2}}
		\begin{pmatrix}
			1&\sqrt{-1}\\1&-\sqrt{-1}	
		\end{pmatrix}.
	\end{equation*}
	In this section, we see only one of those family of Lagrangian in  \cite[Example~26]{anarella2026extrinsically} that can be described as a family of global sections
	\[ s_m\colon \SU(1,1)\to \SU(1,1)\times \SU(1,1), \quad 2\le m\in \bN, \]
	except for those Lagrangian sections appeared in \pref{tb:H-valued-functions-group-space-forms}.

	Let $M = \bR\ltimes_{\varphi_1}\bR^2 $ be a Lie group of Bianchi type~III, where
	\[ \varphi_1\colon \bR\ni s\mapsto \begin{pmatrix}
		e^{2s} &0\\ 0&1
	\end{pmatrix} \in \Aut(\bR^2). \]
	The following left-invariant-frame on $M$
	\begin{equation*}
		E_1 = \frac{\partial}{\partial w}, \quad E_2 = e^{2w}\frac{\partial}{\partial u},\quad E_3 = \frac{\partial}{\partial v}
	\end{equation*}
	consists of Bianchi type~III.
	The following $\mathfrak{sl}(2,\bR)$ pseudo-quaternion basis
	\[ \mathbf{i} = \begin{pmatrix}
		1&0\\0&-1
	\end{pmatrix},
	\quad \mathbf{j} = \begin{pmatrix}
		0&1\\1& 0
	\end{pmatrix},
	\quad \mathbf{k} = \begin{pmatrix}
		0&1\\-1&0
	\end{pmatrix} \]
	corresponds to the following $\frakh = \mathfrak{su}(1,1)$ basis
	\[ \scC_*(\mathbf k) = -2l_g(X_0),\quad \scC_*(\mathbf{j}) = 2l_g(X_1),\quad \scC_*(\mathbf i) = -2l_g(X_2).  \]
	We put 
	\begin{equation}\label{eq:null-vector}
		\ell=X_1-X_0,
		\qquad
		\omega_m=\sqrt{\frac{2}{m^2-1}},
		\qquad 2\leq m\in\bN,
	\end{equation}
	and define
	\begin{align}
		Y_m
		&=-\frac{\omega_m^2+11}{6}X_0
		+\frac{\omega_m^2-7}{6}X_1,
		\label{eq:Ym}\\
		Z_m
		&=-\frac{\omega_m^2+9}{6}X_0
		+\frac{\omega_m^2-9}{6}X_1.
		\label{eq:Zm}
	\end{align}
	These vectors satisfy
	\begin{gather}\label{eq:Ym^2-Zm^2}
		\la Y_m,Y_m\ra=-m^2\omega_m^2,
		\qquad
		\la Z_m,Z_m\ra=-\omega_m^2,\\
		Z_m-Y_m=-\frac13\ell,
		\qquad
		\la\ell,Y_m\ra=\la\ell,Z_m\ra=-3.\label{eq:Ym-Zm-relations}
	\end{gather}
	
	For $u,w,v\in\bR$, let
	\begin{align*}
		a(u,w)
		&=\exp\bigl(u l_g(\ell)\bigr)
		\exp\bigl(-2w l_g(X_2)\bigr),\\
		b_m(v)
		&=\exp\bigl(2v l_g(Y_m)\bigr),\\
		k_m(v)
		&=\exp\bigl(2v l_g(Z_m)\bigr).
	\end{align*}
	All three factors belong to $\SU(1,1)$. Under the identification
	$2l_g\colon\frakm\to\frakh$, the Lie algebra of the three-factor
	subgroup is spanned by
	\begin{equation*}
		d\phi_m|_\mathbf{1}(E_1)= 2l_g (-X_2,0,0),
		\quad
		d\phi_m|_\mathbf{1}(E_2) =2l_g \left(\frac12\ell,0,0\right),
		\quad
		d\phi_m|_\mathbf{1}(E_3) = 2l_g (0,Y_m,Z_m)
	\end{equation*}
	via the Lie group homomorphism
	\begin{equation*}
		\phi_m \colon M\ni (w,u,v) \mapsto \lb a(u,w), b_m(v),k_m(v) \rb \in \SU(1,1) \times \SU(1,1) \times \SU(1,1).
	\end{equation*}
	After applying $2l_g$ in each factor, their only nonzero bracket is
	\[
	\left[
	2l_g(-X_2),2l_g\left(\frac12\ell\right)
	\right]
	=
	2l_g(\ell).
	\]
	Thus they span a Bianchi type~III Lie algebra.
	Since for every $X \in \frakm $
	\[ (2l_g(X))^2 = \la X,X\ra I_2, \]
	and \eqref{eq:Ym^2-Zm^2},
	we have
	\begin{align}
		b_m(v)
		&=\cos(m\omega_m v)I_2
		+\frac{\sin(m\omega_m v)}{m\omega_m}\,2l_g(Y_m),\notag \\
		k_m(v)
		&=\cos(\omega_m v)I_2
		+\frac{\sin(\omega_m v)}{\omega_m}\,2l_g(Z_m).\label{eq:k-period}
	\end{align}
	Thus we write the minimal common period of $b_m $ and $k_m$ as $T_m \coloneqq 2\pi/\omega_m. $
	
	\begin{lemma}\label{lm:Iwasawa-null-Jordan}
		The map
		\begin{equation}\label{eq:base-diffeomorphism}
			\bR^2\times \bR/T_m\bZ \ni (w,u,[v])
			\longmapsto a(u,w)k_m(v)^{-1}\in \SU(1,1),
		\end{equation}
		is a diffeomorphism.
	\end{lemma}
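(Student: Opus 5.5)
The plan is to recognise \eqref{eq:base-diffeomorphism} as an Iwasawa-type decomposition $\SU(1,1)=S\cdot K_m$. Here $S=\{a(u,w)\}$ is a simply connected solvable subgroup of ``$AN$'' type, and $K_m=\{k_m(v)\}$ is a maximal compact (elliptic) one-parameter subgroup. Replacing $k_m(v)$ by $k_m(v)^{-1}$ is harmless, since $v\mapsto -v$ is a diffeomorphism of $\bR/T_m\bZ$.

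\emph{Step 1 (the factor $S$).} By the bracket relation $[2l_g(-X_2),2l_g(\tfrac12\ell)]=2l_g(\ell)$ already recorded, $\mathfrak s=\operatorname{span}\{l_g(X_2),l_g(\ell)\}$ is a two-dimensional non-abelian Lie subalgebra with ideal $\bR l_g(\ell)$. Since $\ell$ is null, $(2l_g(\ell))^2=\la\ell,\ell\ra I_2=0$, so $\exp(ul_g(\ell))=I_2+ul_g(\ell)$ is unipotent. Since $X_2$ is spacelike, $\exp(-2wl_g(X_2))$ is hyperbolic with real eigenvalues $e^{\pm w}$. The standard argument for $\Aff^+(\bR)$ then shows that $(u,w)\mapsto a(u,w)$ is a diffeomorphism of $\bR^2$ onto a closed connected subgroup $S$: one checks injectivity by comparing eigenvalues and then the unipotent part, and the image is closed because it is the identity component of the stabiliser of the null line $\bR\ell$ under $\lambda$. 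Equivalently, conjugating by $\scC$ and a suitable element, $S$ becomes the upper triangular group with positive diagonal.

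\emph{Step 2 (the factor $K_m$).} Since $\la Z_m,Z_m\ra=-\omega_m^2<0$, the vector $Z_m$ is timelike. Formula \eqref{eq:k-period} shows that $k_m$ is a circle subgroup and that $k_m(v)=I_2$ exactly when $v\in T_m\bZ$. Hence $[v]\mapsto k_m(v)$ is an embedding $\bR/T_m\bZ\to\SU(1,1)$ onto $K_m=\operatorname{Stab}_\lambda(\hat Z)$, where $\hat Z=Z_m/\omega_m$. Indeed, this stabiliser is a connected one-dimensional subgroup containing $K_m$.

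\emph{Step 3 (decomposition).} Let $\SU(1,1)$ act through $\lambda$ on the sheet $\bH\subset\frakm$ of unit timelike vectors containing $\hat Z$; this sheet is a copy of $\bH^2$. The action is transitive with stabiliser $K_m$, so the orbit map $\SU(1,1)/K_m\to\bH$ is a diffeomorphism. The key claim is that $S$ acts simply transitively on $\bH$.
\begin{itemize}
\item \emph{Freeness.} A nontrivial element of $S$ has the form $\exp(ul_g(\ell))\exp(-2wl_g(X_2))$. It is hyperbolic when $w\neq0$, because its eigenvalues are $e^{\pm w}$, and it is a nontrivial unipotent element when $w=0$. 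In either case it has no fixed point in $\bH^2$, since point stabilisers are compact and consist of elliptic elements.
\item \emph{Transitivity.} Each orbit of $S$ is open, because by freeness the orbit map $S\to\bH$ is an injective immersion between two-manifolds. Since $\bH$ is connected, any one orbit is then all of $\bH$.
\end{itemize}
Consequently $S\to\bH$, $a\mapsto\lambda(a)\hat Z$, is a diffeomorphism. Given $g\in\SU(1,1)$, let $a\in S$ be the unique element with $\lambda(a)\hat Z=\lambda(g)\hat Z$; then $a^{-1}g\in K_m$. The inverse map $g\mapsto\bigl(a,\,a^{-1}g\bigr)$ is smooth because it is built from the inverse of this diffeomorphism, so $S\times K_m\to\SU(1,1)$ is a diffeomorphism. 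Composing with Steps 1 and 2 gives the lemma.

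I expect the main obstacle to be the freeness part of Step 3, together with the closedness of $S$ in Step 1. If the eigenvalue argument turns out to be awkward in the $\SU(1,1)$ picture, I would instead transport everything by $\scC$ to $\SL(2,\bR)$. There I would check by a direct computation that $S$ is conjugate to the group of upper triangular matrices with positive diagonal and $K_m$ to $\SO(2)$, and then quote the classical Iwasawa decomposition, noting that it holds for every maximal compact subgroup because all maximal compact subgroups are conjugate.
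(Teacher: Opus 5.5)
Your proof is correct, but it takes a different route from the paper's.

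\textbf{The paper's route.} The paper works entirely with explicit matrices. Conjugating by $C_0$, it writes $a(u,w)=C_0\begin{pmatrix}e^w&ue^{-w}\\0&e^{-w}\end{pmatrix}C_0^{-1}$ and $k_m(v)=C_0D_mR(\omega_m v)D_m^{-1}C_0^{-1}$. Here $R(\theta)\in\SO(2)$ is the standard rotation and $D_m$ is a positive diagonal matrix. It then quotes the classical Iwasawa decomposition $AN\times\SO(2)\to\SL(2,\bR)$. The conjugation by $D_m$, which the paper passes over silently, is harmless because $D_m\in A$, so $AN\cdot D_m\SO(2)D_m^{-1}=AN\cdot\SO(2)\cdot D_m^{-1}$. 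This is essentially your fallback plan. In this instance the conjugator already lies in $AN$, so the general conjugacy of maximal compact subgroups is not even needed.

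\textbf{Your route.} Your main argument instead reproves the Iwasawa decomposition intrinsically in $\SU(1,1)$. You realise $\SU(1,1)/K_m$ as the hyperboloid sheet $\mathbb{H}$ through $\hat Z$ and show that $S$ acts on it simply transitively.
\begin{itemize}
\item \emph{What it costs:} a freeness-and-openness argument, which the paper outsources to a textbook theorem.
\item \emph{What it buys:} independence from the particular coefficients of $Z_m$ and from any explicit computation of $k_m$ or $D_m$. All you use is that $Z_m$ is timelike and that $\operatorname{span}\{X_2,\ell\}=\ell^\perp$ is a degenerate plane, so it contains no timelike vector. That last fact gives the immersion property of the orbit map directly, since $\lambda_*(l_g(X))\hat Z=X\times\hat Z$ vanishes only for $X\in\bR\hat Z$.
\end{itemize}
So your argument shows the lemma holds for any timelike choice in place of $Z_m$.

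\textbf{One point to tighten.} You assert that the stabiliser of $\hat Z$ is connected; this deserves one line. It is the centraliser of $2l_g(\hat Z)$, whose square is $-I_2$. That centraliser is $\{\cos\theta\,I_2+\sin\theta\,2l_g(\hat Z)\}$, which is exactly $K_m$; equivalently, $-I_2=k_m(T_m/2)$ already lies in $K_m$.

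In the freeness step, the phrase ``consist of elliptic elements'' is slightly loose, since stabilisers also contain $\pm I_2$. Compactness alone does the job, because hyperbolic and nontrivial unipotent elements generate unbounded cyclic subgroups.
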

	\begin{proof}
		Let 
		\[ AN = \lc \begin{pmatrix}
			r & s \\0&r^{-1} 
		\end{pmatrix} \mid r>0, s\in \bR \rc, \quad D_m = \begin{pmatrix}
			\sqrt{\frac{\omega_m}{3}} & 0\\ 0&\sqrt{\frac{3}{\omega_m}}
		\end{pmatrix}.
		\]
		Since
		\[
		a(u,w) =
		C_0\begin{pmatrix}
			e^w & ue^{-w} \\ 0 & e^{-w}
		\end{pmatrix}
		C_0^{-1}
		,\quad 
		k_m(v) = C_0D_m\begin{pmatrix}
			\cos(\omega_mv) & \sin(\omega_mv) \\ -\sin(\omega_mv) & \cos(\omega_mv)
		\end{pmatrix}D_m^{-1}C_0^{-1},
		\]
		then the standard right-Iwasawa
		decomposition gives a diffeomorphism
		\[
		AN\times\mathcal \SO(2) \ni \lb \begin{pmatrix}
			e^w & ue^{-w} \\ 0 & e^{-w}
		\end{pmatrix}, \begin{pmatrix}
			\cos(\omega_mv) & \sin(\omega_mv) \\ -\sin(\omega_mv) & \cos(\omega_mv)
		\end{pmatrix} \rb
		\longmapsto a(u,w)k_m(v)^{-1}\in \SU(1,1).
		\]
		Consequently,
		$[v]\mapsto k_m(v)$ identifies $\bR/T_m\bZ$ with $k_m(\bR)\simeq \bS^1 $ by
		\pref{eq:k-period}.  This proves the assertion.
	\end{proof}
	
	For every $q\in\SU(1,1)$, let $(u,w,[v])$ be the unique triple such
	that
	\begin{equation*}
		q=a(u,w)k_m(v)^{-1}.
	\end{equation*}
	We define
	\begin{equation}\label{eq:fm}
		f_m(q)=k_m(v)b_m(v)^{-1}.
	\end{equation}
	Equivalently, $f_m$ is the composition of the inverse of
	\pref{eq:base-diffeomorphism} with
	\[
	(u,w,[v])\longmapsto k_m(v)b_m(v)^{-1}.
	\]
	It is also well defined on $[v]$, since
	$T_m$ is a common period of $b_m$ and $k_m$.  Moreover,
	\begin{equation*}
		\bigl(a(u,w)b_m(v)^{-1},k_m(v)b_m(v)^{-1}\bigr)
		=\bigl(qf_m(q),f_m(q)\bigr).
	\end{equation*}
	The corresponding section of the spin bundle is therefore
	\begin{equation*}
		s_m(q)
		=\left(qf_m(q),\bigl(f_m(q)^{-1}\bigr)^\dagger\right).
	\end{equation*}
	Indeed, \pref{eq:Spin(2,2)-to-Spin(1,2)} gives
	$\pi(s_m(q))=q$.
	Let $\ell_m \in \Gamma(T\AdS_3) $ be a vector field given by
	\begin{equation}\label{eq:null-vector-field-m}
		\ell_m|_q = [(q,\mathbf{1}), \lambda(k_m(v))\ell]\in T_q\AdS_3.
	\end{equation}
	This is a nowhere vanishing null vector field on $\AdS_3.$
	
	\begin{proof}[Proof of \pref{th:null-Jordan-GKS} and \pref{pr:null-Jordan-Lagrangian-sections}]
		For every $\epsilon \in \scE_1, $ and $2\le m\in \bN, $ we show that an $\epsilon$-unit spinor $\psi_{m,\epsilon} \in \Gamma(\Sigma \AdS_3)$ given by
		\[ \psi_{m,\epsilon}(q) = [(q,\mathbf{1}), f_m(q)\mathbb{1}_\epsilon] \]
		is a generalized Killing spinor on $\AdS_3.$
		We calculate the associated endomorphism $A_m = \frac{1}{2}l_g^{-1}\circ \bfM $ directly at an arbitrary point.
		We denote 
		\[ a=a(u,w),\quad b=b_m(v), \quad k = k_m(v) \in \SU(1,1). \]
		Every tangent vector to the image of the homomorphism $\phi_m \colon M \to \SU(1,1)\times \SU(1,1)\times\SU(1,1) $ is of the form
		\[ \lb a2l_g(X_a),b2l_g(X_b),k2l_g(X_k) \rb \in T_{(a,b,k)}\SU(1,1)^3, \]
		where $(X_a,X_b,X_k) \in \frakm^3 $ is given by
		\[ (X_a,X_b,X_k) = r\lb -X_2,0,0 \rb + s\lb \frac{1}{2}\ell, 0,0 \rb +t\lb 0,Y_m,Z_m \rb, \quad (r,s,t)\in \bR^3. \]
		Differentiating $q=ak^{-1}$ and $f_m(q)=kb^{-1} $ gives
		\begin{align*}
			q^{-1}\dot{q} & = 2l_g\lb \lambda(k)(X_a-X_k) \rb,\\
			df_m(\dot{q})f_m(q)^{-1} & = 2l_g\lb \lambda(k)(X_k-X_b) \rb.
		\end{align*}
		Since $\lambda(k)(X_a-X_k)$ represents an arbitrary element of $\frak m$,
		the vector
		\[
		\dot q=2ql_g\bigl(\lambda(k)(X_a-X_k)\bigr)
		\]
		represents an arbitrary element of $T_q\AdS_3$.
		By \ref{eq:Ym-Zm-relations}, we have
		\[ \la \ell, X_a-X_k \ra = 3t,\quad X_k-X_b = -\frac{t}{3}\ell = -\frac{1}{9}\la \ell,X_a-X_k \ra\ell. \]
		If we write $X = \lambda(k)(X_a-X_k), $  \pref{lm:F-f-group-space-form} yields
		\begin{align*}
			\frac{1}{2}l_g^{-1}\scM_{(q,\mathbf{1})}(X) & = \frac{1}{2}\lambda(k)(X_a-X_k) + \lambda(k)(X_k-X_b)\\
			& = \frac{1}{2}X -\frac{1}{9}\la \lambda(k)\ell,X \ra\lambda(k)\ell, \quad \forall q \in \AdS_3,\\
			A_m(\scX) & = \frac{1}{2}\scX-\frac{1}{9}g(\ell_m,\scX)\ell_m.
		\end{align*} 			
		In particular, $A_m$ is symmetric. Hence the correspondence between
		generalized Killing spinors and Lagrangian sections implies that $s_m$ is Lagrangian of the nearly pseudo-K\"ahler structure of $\SU(1,1)\times \SU(1,1).$
		Moreover, since $\ell_m$ is nonzero and null, then \pref{eq:explicit-null-Jordan-tensor} and \ref{eq:null-Jordan-properties} hold.
	\end{proof}
	

	\bibliographystyle{amsalpha}
	\bibliography{bibs}

\end{document}